\documentclass[10pt]{article}
\usepackage[hmargin=3.5cm,vmargin=3cm]{geometry}
\usepackage{amsthm}
\usepackage{amsmath}
\usepackage{amssymb}
\usepackage{enumitem}
\usepackage{titlesec}
\usepackage[pdftex]{color}
\definecolor{darkblue}{rgb}{0.3,0.3,0.7}
\usepackage[pdftex,final]{graphicx}
\usepackage[pdftex,pdftitle={Limit theorems for power variations of ambit fields driven by white noise},pdfauthor={Mikko S. Pakkanen},colorlinks=TRUE,allcolors=darkblue]{hyperref}

\titleformat{\section}[block]{\center\bfseries}{\thetitle.}{10pt}{}{}

\titleformat{\subsection}[hang]{\bfseries}{\thetitle.}{10pt}{}{}

\titleformat{\subsubsection}[hang]{\it}{\thetitle.}{10pt}{}{}

\numberwithin{equation}{section}

\theoremstyle{plain}
\newtheorem{thm}[equation]{Theorem}

\newtheorem{lem}[equation]{Lemma}

\newtheorem{prop}[equation]{Proposition}
\theoremstyle{remark}
\newtheorem{rem}[equation]{Remark}
\theoremstyle{definition}
\newtheorem{exm}[equation]{Example}
\newtheorem{asm}[equation]{Assumption}
\newtheorem{defn}[equation]{Definition}

\newcommand{\ud}{\mathrm{d}}
\newcommand{\supp}{\mathrm{supp}}

\newcommand{\eqdefl}{\mathrel{\mathop:}=}
\newcommand{\N}{\mathbb{N}}

\newcommand{\R}{\mathbb{R}}
\newcommand{\Z}{\mathbb{Z}}

\newcommand{\E}{\mathbf{E}}
\newcommand{\Cov}{\mathbf{Cov}}
\newcommand{\Var}{\mathbf{Var}}
\newcommand{\Corr}{\mathbf{Corr}}
\newcommand{\prob}{\mathbf{P}}
\newcommand{\Hil}{\mathcal{H}}
\newcommand{\wn}{W}
\providecommand{\bs}[1]{\boldsymbol{#1}}
\linespread{1.05}

\newcommand{\revised}[1]{\color{magenta}#1\color{black}}
\renewcommand{\revised}[1]{#1}

\newcommand{\myfigure}[2]{\center \includegraphics*[#1]{#2} }

\begin{document}
\title{\bf Limit theorems for power variations \\ of ambit fields driven by white noise}
\author{
Mikko S. Pakkanen\thanks{CREATES and Department of Economics and Business, 
Aarhus University, 
Fuglesangs All\'e 4, 
8210 Aarhus V, Denmark, 
URL: \url{http://www.mikkopakkanen.fi/},
\revised{E-mail: \href{mailto:mpakkanen@econ.au.dk}{\nolinkurl{mpakkanen@econ.au.dk}}.}}
}

\date{January 7, 2014}

\maketitle

\begin{abstract}
We study the asymptotics of lattice power variations of two-parameter ambit fields driven by white noise. Our first result is a law of large numbers for power variations. Under a constraint on the memory of the ambit field, normalized power variations converge to certain integral functionals of the volatility field associated to the ambit field, when the lattice spacing tends to zero. This result holds also for thinned power variations that are computed by only including increments that are separated by gaps with a particular asymptotic behavior. Our second result is a stable central limit theorem for thinned power variations.
\end{abstract}

\noindent {\it Keywords:} ambit field, power variation, law of large numbers, central limit theorem, chaos decomposition

\vspace*{1ex}

\noindent {\it 2010 Mathematics Subject Classification:} 60G60 (Primary), 60F17 (Secondary)

\section{Introduction}

\subsection{Ambit fields and volatility}

A characteristic feature of many real-world random phenomena is that the \emph{magnitude} or the \emph{intensity} of realized fluctuations varies in time or space, or both. There are various terms used in different contexts that roughly correspond to this characteristic. To highlight two of them, in studies of turbulence, this is called \emph{intermittency}, whereas in finance and economics the corresponding notion is (stochastic) \emph{volatility}. Sudden extreme fluctuations --- say, rapid changes in wind velocity or prices of financial securities --- have often dire consequences, so understanding their statistical properties is clearly of key importance.

Barndorff-Nielsen and Schmiegel \cite{BS2005,BS2006} have introduced a class of L\'evy-based random fields, for which they coined the name \emph{ambit field}, to model space-time random phenomena that exhibit intermittency or stochastic volatility.
The primary application of ambit fields has been phenomenological modeling of turbulent velocity fields. Additionally, Barndorff-Nielsen, Benth, and Veraart \cite{BNBV2010} have recently applied ambit fields to modeling of the term structure of forward prices of electricity. Electricity prices, in particular, are prone to rapid changes and spikes since the supply of electricity is inherently inelastic and electricity cannot be stored efficiently.
It is also worth mentioning that, at a more theoretical level, some ambit fields have been found to arise as solutions to certain stochastic partial differential equations \cite{BNBV2011}. Barndorff-Nielsen, Benth, and Veraart \cite{BBV2012} provide a survey on recent results on ambit fields and related \emph{ambit processes}.

In this paper, we study the asymptotic behavior of power variations of a two-parameter ambit field driven by white noise, with a view towards measuring the realized volatility of the ambit field. 
Specifically, we consider ambit field $(Y_{(s,t)})_{(s,t)\in [0,1]^2}$, defined via the equation 
\begin{equation}\label{eq:genambitfield}
Y_{(s,t)} = \int_{A(s,t)} g(s-u,t-v) \sigma_{(u,v)} W(\ud u, \ud v),
\end{equation}
where the integrator $W$ is a white noise on $\R^2$ and the integrand is defined in terms of a positive-valued, continuous \emph{volatility field} $(\sigma_{(s,t)})_{(s,t) \in \R^2}$ and a \emph{weight function} $g \in L^2(\R^2)$. The integral in \eqref{eq:genambitfield} is computed over the set $A(s,t)\subset \R^2$, which is known as the \emph{ambit set} associated to the point $(s,t)$. More figuratively, $A(s,t)$ defines the ``ambit'' of noise and volatility innovations that influence $Y_{(s,t)}$. We use here the common specification of $A(s,t)$ as a translation of some fixed Borel set $A \subset \R^2$, viz.,
\begin{equation}\label{eq:ambitset}
A(s,t) \eqdefl A + (s,t) \eqdefl \{ (u+s,v+t) : (u,v) \in A \}.
\end{equation}
The shape of the set $A$ has a strong influence on the probabilistic properties of $Y$. When the parameter $t$ is interpreted as time, it is customary to assume that $A \subset \R \times (-\infty,0]$, so that only past innovations can influence the present. We refer to \cite{BNBV2011} for a discussion on the possible shapes of $A$ in various modeling contexts. 
We consider here only the case where the volatility field $\sigma$ and the white noise $W$ are \emph{independent}. In this case the integral in \eqref{eq:genambitfield} can be defined in a straightforward manner as a \emph{Wiener integral}, conditional on $\sigma$. (Ambit fields with volatilities that do depend on the driving white noise can be defined, but then the integration theory becomes more involved, see \cite{BNBV2011} for details. Moreover, the general framework of ambit fields also accommodates non-Gaussian random measures, L\'evy bases, as driving noise.)

The power variations we study are defined over observations of $Y$ on a \emph{square lattice} in $[0,1]^2$ using \emph{rectangular increments} (see Section \ref{subs:pv} for precise definitions). 
The spacing of the square lattice is $1/n$, and we let $n \rightarrow \infty$ in the asymptotic results.
In addition to ordinary power variations that involve all of the available increments, we consider also \emph{thinned} power variations that are computed using  only every $k_n$-th increment in the lattice. Asymptotically, we let $k_n \rightarrow \infty$ so that $k_n/n \rightarrow 0$. Similar procedures have been considered in the context of Gaussian processes by Lang and Roueff \cite{LR2001} and, more recently, in the context of Brownian semistationary processes by Corcuera et al.\ \cite{CHPP2012}.

Our first result is a functional law of large numbers for both ordinary and thinned power variations (Theorem \ref{thm:powerlln}). Under an assumption that constrains the memory of $Y$ through the so-called \emph{concentration measures} associated to the weight function $g$ (Assumption \ref{asm:lln}), we show that the suitably scaled power variation of $Y$ converges in probability to an integral functional of the volatility field $\sigma$.  Under a more restrictive and quantitative version of Assumption \ref{asm:lln} (which appears as Assumption \ref{asm:clt}), we also obtain a stable functional central limit theorem for thinned power variations (Theorem \ref{thm:powerclt}) with a conditionally Gaussian random field as the limit.
We give some explicit examples of weight functions $g$ that satisfy Assumptions \ref{asm:lln} or \ref{asm:clt} in Section \ref{subs:weight}.

The motivation of this paper is twofold. On the one hand, the study of the asymptotics of power variations of ambit fields is interesting from a probabilistic perspective, as it provides information on the fine structure of the realizations of ambit fields. On the other hand, in practical situations it is of interest to draw inference of volatility statistics of the form
\begin{equation}\label{eq:functional}
\int_0^{s} \int_0^t \sigma_{(u,v)}^p \ud u  \ud v,
\end{equation}
for $p>0$, based on discrete observations of the ambit field $Y$. Our law of large numbers establishes a sufficient condition that the suitably scaled $p$-th power variation of $Y$ over $[0,s]\times[0,t]$ converges to \eqref{eq:functional}. This could be seen as a first step towards a theory of volatility estimation for ambit fields.

\subsection{Related literature}

There is a wealth of literature on laws of large numbers and central limit theorems for power, bipower, and multipower variations of (one-parameter) stochastic processes. Notably, semimartingales are well catered for, see the monograph by Jacod and Protter \cite{JP2012} for a recent survey of the results. Similar results for non-semimartingales are, for obvious reasons, more case-specific. Closely relevant to the present paper are the results for Gaussian processes with \emph{stationary increments} \cite{BCP2009,BCPW2009} and \emph{Brownian semistationary processes} \cite{BCP2011,BCP2012,CHPP2012}. In fact, a Brownian semistationary process is the one-parameter counterpart of an ambit field driven by white noise. The proofs of the central limit theorems in \cite{BCP2009,BCP2011,BCP2012,BCPW2009,CHPP2012} use a method that involves Gaussian approximations of \emph{iterated Wiener integrals}, due to Nualart and Peccati \cite{NP2005}. We employ a similar approach, adapted to the two-parameter setting, in the proof of our central limit theorem.

Barndorff-Nielsen and Graversen \cite{BNG2011} have recently obtained a law of large numbers for the quadratic variation of an ambit process driven by white noise in a space-time setting. The probabilistic setup they consider is identical to ours, but their quadratic variation is defined over observations along a line in two-dimensional space-time, instead of a square lattice. The proof of our law of large numbers is inspired by the arguments used in \cite{BNG2011}.

Compared to the one-parameter case, asymptotic results for lattice power variations of random fields with two or more parameters are scarcer. 
There are, however, several results for Gaussian random fields, under various assumptions constraining their covariance structure.
Kawada \cite{Kaw1975} proves a law of large numbers for general variations of a class of multi-parameter Gaussian random fields, extending an earlier result of Berman \cite{Ber1967}. 
Guyon \cite{Guy1987} derives a law of large numbers for power variations (using two kinds of increments) of a stationary, two-parameter Gaussian random field with a covariance that behaves approximately like a power function near the origin.

An early functional central limit theorem for quadratic variations of a multi-parameter Gaussian random field, is due to Deo \cite{Deo1989}. 
Motivated by an application to statistical estimation of fractal dimension, Chan and Wood \cite{CW2000} prove a central limit theorem for quadratic variations of a stationary Gaussian random field satisfying a covariance condition that is somewhat similar to the one of Guyon \cite{Guy1987}.
More recently, R\'eveillac \cite{Rev2009b,Rev2009} has obtained central limit theorems for weighted quadratic variations of ordinary and fractional Brownian sheets. Similar results, which include also non-central limit theorems, applying to more general \emph{Hermite variations} of fractional Brownian sheets appear in the papers by Breton \cite{Bre2011} and R\'eveillac, Stauch, and Tudor \cite{RST2012}.

\section{Definitions and main results}\label{sec:mainresults}

\subsection{Notation}

For any $\bs{z}\in \R^2$, non-empty $A \subset \R^2$, and $r>0$, we write $B(\bs{z},r) \eqdefl \{ \bs{\zeta} \in \R^2 : \|\bs{\zeta}-\bs{z}\| < r \}$, and $A^r \eqdefl \bigcup_{\bs{\zeta}\in A} B(\bs{\zeta},r)$.
Moreover, $\overline{A}$ stands for the closure of $A$ in $\R^2$.

For any $s$,\ $t\in\R$, we use $s\wedge t \eqdefl \min(s,t)$ and $s\vee t \eqdefl \max(s,t)$, $\lfloor s \rfloor \eqdefl \max\{ r \in \Z : r \leqslant s \}$, $\lceil s \rceil \eqdefl \min\{ r \in \Z : r \geqslant s \}$, and $\{ x \} \eqdefl x - \lfloor x \rfloor$. It will be convenient to write $s \lesssim_\theta t$ (resp.~$s \gtrsim_\theta t$) whenever there exists $C_\theta >0$ that depends only on the parameter $\theta$, such that $s \leqslant C_\theta t$ (resp.~$C_\theta s \geqslant  t$).  We write $s \asymp_\theta t$ to signify that both $s \lesssim_\theta t$ and $s \gtrsim_\theta t$ hold.  

We denote the weak convergence of probability measures by $\stackrel{w}{\rightarrow}$, the convergence of random elements in law by $\stackrel{L}{\rightarrow}$, and the space of Borel probability measures on $\R^2$ by $\mathcal{P}(\R^2)$.
The support of $\nu \in \mathcal{P}(\R^2)$, or briefly $\supp \, \nu$, is the smallest closed set with full $\nu$-measure, given by $\bigcap_{r > 0}\big\{ \bs{z} \in \R^2 : \nu\big(B(\bs{z},r)\big)>0\big\}$. The Lebesgue measure on $\R^d$ is denoted by $\lambda_d$ and the Dirac measure at $\bs{z}\in\R^d$ by $\delta_{\bs{z}}$.

For any $q>0$, we write $m_q \eqdefl \E[|X|^q]$, where $X \sim N(0,1)$.
Finally, $|A|$ stands for the number elements in a finite set $A$, and we use the conventions $\N \eqdefl \{1,2,\ldots \}$ and $\N_0 \eqdefl \N \cup \{ 0\}$.

\subsection{Rigorous definition of the ambit field}

Let $\wn$ be a \emph{white noise} on $[-1,1]^2$ with the Lebesgue measure $\lambda_2$ as the \emph{control measure}. Recall that this means that $\wn$ is a zero-mean Gaussian process indexed by $\mathcal{B}([-1,1]^2)$ with covariance $\E[\wn(A)\wn(B)] = \lambda_2(A \cap B)$ for any $A$,\ $B\in \mathcal{B}([-1,1]^2)$. Throughout this paper, we consider an ambit field $Y$ given by
\begin{equation}\label{eq:ambitfield}
Y_{(s,t)} \eqdefl \int g(s-u,t-v) \sigma_{(u,v)} \wn(\ud u, \ud v), \quad (s,t) \in [0,1]^2,
\end{equation}
where $g \in L^2(\R^2)$ is a non-vanishing \emph{weight function} and $(\sigma_{(s,t)})_{(s,t)\in [-1,1]^2}$ is a continuous, strictly positive \emph{volatility field}, independent of $\wn$. Let us denote by $A_g$ the \emph{essential support} of $g$ (see,\ e.g.,\ \cite[p.\ 13]{LL2001} for the definition). In \eqref{eq:ambitfield} it suffices to integrate over the set
\begin{equation*}
A(s,t)\eqdefl -A_g+(s,t).
\end{equation*}
Thus, we recover the setting outlined in \eqref{eq:genambitfield} and \eqref{eq:ambitset} with $A = -A_g$. To ensure that $A(s,t) \subset [-1,1]^2$ for all $(s,t) \in [0,1]$, we assume that $A_g \subset [0,1]^2$.

The stochastic integral in \eqref{eq:ambitfield} is a conditional \emph{Wiener integral} with respect to $\wn$, defined as follows. Due to the independence of $\sigma$ and $\wn$, we may assume without loss of generality that the underlying probability space is the completion of the product space 
\begin{equation*}
(\Omega_\wn\times \Omega_\sigma, \mathcal{F}_\wn\otimes \mathcal{F}_\sigma,\prob_\wn \otimes \prob_\sigma),
\end{equation*}
where $(\Omega_\wn, \mathcal{F}_\wn,\prob_\wn)$ carries the white noise $\wn$, so that 
$\mathcal{F}_\wn = \sigma \{ W(A) : A \in \mathcal{B}([-1,1]^2)\}$,
and 
\begin{equation*}
(\Omega_\sigma, \mathcal{F}_\sigma,\prob_\sigma) = \big(C([-1,1]^2,\R_+), \mathcal{B}(C([-1,1]^2,\R_+)),\prob_\sigma\big)
\end{equation*}
 is the canonical probability space of $\sigma$, i.e., $\sigma_{(x,t)}(\omega) \eqdefl \omega(s,t)$ for any $\omega \in \Omega_\sigma$ and $(s,t) \in [-1,1]^2$. Then, for any $\omega \in \Omega_\sigma$ and $(s,t) \in [-1,1]^2$, we define $Y_{(s,t)}(\cdot,\omega)$ to be the Wiener integral of the function $(u,v) \mapsto g(s-u,t-v)\omega(u,v)$, which belongs to $L^2([-1,1]^2)$, with respect to $\wn$. Since the Wiener integral is a linear isometry between the integrand space $L^2([-1,1]^2)$ and the space $L^2(\Omega_\wn)$ of random variables (see,\ e.g.,\ \cite[pp.\ 7--8]{Nua1995} for details), no issues will arise with the measurability of $Y_{(s,t)}$. 

\revised{Let us briefly look into some of the probabilistic properties of the ambit field $Y$ (more details can be found in the survey article \cite{BBV2012}). Given $\sigma$, the field $Y$ is conditionally centered Gaussian with the conditional covariance function
\begin{equation*}
\big((s,t),(s',t')\big) \mapsto \E_W[Y_{(s,t)}Y_{(s',t')}]=\iint\limits_{A_g}  g(s'-s+u,t'-t+v)g(u,v) \sigma^2_{(s-u,t-v)} \ud u \ud v,
\end{equation*}
where $\E_W$ stands for expectation with respect to $\prob_W$.
Thus, $Y$ is non-stationary conditional on $\sigma$, unless almost any realization of $\sigma$ is a constant function. It is worth stressing that in many cases the one-parameter process $Y^{(s)}_t \eqdefl Y_{(s,t)}$, $t \in [0,1]$, where $s \in [0,1]$ is kept fixed, is not a semimartingale. In fact the following example shows that even a very simple, uniform weight function can result in a non-semimartingale.

\begin{exm}
Suppose that $g = \mathbf{1}_{[0,1]\times[0,y]}$ for some $y \in (0,1]$ and that $\sigma = 1$. By the finite additivity of the white noise $W$, it holds that
\begin{equation*}
Y^{(s)}_t = W([s-1,s]\times[t-y,t]) =  \widetilde{W}^{(s)}_{t+y} -\widetilde{W}^{(s)}_{t}, \quad t \in [0,1], 
\end{equation*}
where
\begin{equation*}
\widetilde{W}^{(s)}_{t} \eqdefl W([s-1,s]\times[-y,t-y]), \quad t \in [0,1+y],
\end{equation*}
is a Brownian motion. It follows from Example 5.7 of \cite{B2010}, up to a linear time change, that $Y^{(s)}$ is not a semimartingale when $y < 1$, and that the semimartingale property does in fact hold when $y = 1$.
\end{exm}

\begin{rem}
When $\sigma$ is a constant, the process $Y^{(s)}$ is stationary Gaussian and admits a moving average representation with respect to a Brownian motion, by a result of Karhunen \cite[Satz 5]{K1950}. To outline the argument, we can extend the process $Y^{(s)}$ to $\R$ by extending the driving white noise $W$ to $\R^2$. It follows from the continuity of translations in $L^2(\R^2)$ (see, e.g., \cite[p.\ 170]{HL1999}) and the isometry property of Wiener integrals that $Y^{(s)}_t \rightarrow Y^{(s)}_0$ in $L^2(\Omega)$ as $t \rightarrow 0$.
Moreover, since $A_g \subset [0,1]^2$, we have
\begin{equation*}
\bigcap_{u \in \R}\overline{\mathrm{span}} \big\{ Y^{(s)}_t : t \in (-\infty,u]\big\} \subset \bigcap_{u \in \R} \overline{\mathrm{span}} \{ W(E) : E \in \mathcal{B}([s-1,s]\times [u-1,u])\} = \{ 0 \},
\end{equation*}
where $\overline{\mathrm{span}}$ stands for the closed linear span in $L^2(\Omega)$. Theorem 2.5 of \cite{C2003}, which is a consequence of Satz 5 of \cite{K1950}, implies that there exist a weight function $\tilde{g} \in L^2\big((0,\infty)\big)$ and a standard Brownian motion $\big(\overline{W}_t\big)_{t \in \R}$ such that $Y^{(s)}$ equals in law to the moving average process
\begin{equation*}
\int_{-\infty}^t \tilde{g}(t-u) \ud \overline{W}_u, \quad t \in \R.
\end{equation*}

\end{rem}
}

\begin{rem}
Given any continuous function $\Gamma : [0,1] \longrightarrow [0,1]^2$, i.e., a \emph{curve}, we may define a stochastic process $(Y_{\Gamma(t)})_{t \in [0,1]}$, giving the description of the ambit field $Y$ as seen by an observer moving along the curve $\Gamma$. Such  processes are called \emph{ambit processes}. Barndorff-Nielsen and Graversen \cite{BNG2011} study the limit behavior of the quadratic variation of $(Y_{\Gamma(t)})_{t \in [0,1]}$ in the case where $\Gamma$ is a line segment, establishing sufficient conditions for the law of large numbers.
\end{rem}

\subsection{Power variation and concentration measure}\label{subs:pv}

For a two-parameter random field, an \emph{increment} is naturally defined over a \emph{rectangle} in the parameter space. Specifically, the rectangular increment of the ambit field $Y$ over $R \eqdefl (s_1,s_2] \times (t_1,t_2] \subset [0,1]^2$ is defined as
\begin{equation}\label{eq:planarincr}
Y(R) \eqdefl Y_{(s_2,t_2)}-Y_{(s_1,t_2)}-Y_{(s_2,t_1)}+Y_{(s_1,t_1)}.
\end{equation}
The definition \eqref{eq:planarincr} is standard in the literature of random fields, and can be recovered for example by partial differencing of $Y_{(s,t)}$ with respect to $s$ and $t$ --- or vice versa.
Although not needed in the sequel, it is worth pointing out the fact that the map $R \mapsto Y(R)$ can be extended to a finitely additive random measure on the algebra generated by finite unions and intersections of rectangles in $[0,1]^2$, which motivates the notation $Y(R)$.

For fixed $p>0$, we shall consider the $p$-th power variation of $Y$ over the square lattice $\mathcal{S}_n \eqdefl \big\{ \big(\frac{i}{n},\frac{j}{n}\big) : i,\, j = 0,1,\ldots, n\big\} \subset [0,1]^2$ for any $n \in \N$. Based on the values of $Y$ on the lattice $\mathcal{S}_n$, we may compute the increments of $Y$ over the rectangles
\begin{equation*}
R^{(n)}_{(i,j)} \eqdefl \big((i-1)/n,i/n \big] \times \big((j-1)/n,j/n \big], \quad \textrm{$i$,~$j = 1,\ldots,n$.}
\end{equation*}
Using them, we define the $p$-th power variation of $Y$ over $\mathcal{S}_n$ by
\begin{equation}\label{eq:pvariation}
V^{(p)}_{(s,t)}(k,n)\eqdefl\sum_{i=1}^{\lfloor ns/k \rfloor}\sum_{j=1}^{\lfloor nt/k \rfloor} \big|Y\big(R^{(n)}_{(ki,kj)}\big)\big|^p, \quad (s,t) \in [0,1]^2.
\end{equation}
where $k \in \N$ is a \emph{thinning parameter}. This allows us to take only every $k$-th increment into account when computing the power variation. The case $k=1$ corresponds to ordinary power variations whereas letting $k>1$ gives rise to thinned power variations. Note that we regard $V^{(p)}(k,n)$ as a random field on $[0,1]^2$.

To state the assumptions of our limit theorems, we need to introduce a technical device that controls the interdependence of the increments appearing in \eqref{eq:pvariation}. Let us first define $h_n \in L^2(\R^2)$ for any $n \in \N$ by
\begin{equation*}
h_n(s,t) \eqdefl  g(s,t) - g(s-1/n,t) - g(s,t-1/n) + g(s-1/n,t-1/n),
\end{equation*}
which, in fact, enables us to write succinctly
\begin{equation*}
Y\big(R^{(n)}_{(i,j)}\big) = \int h_n(i/n-u, j/n-v) \sigma_{(u,v)} \wn(\ud u, \ud v).
\end{equation*}
Since $g$ is non-vanishing, we have $c_n \eqdefl \int_{\R^2}h_n(\bs{z})^2 \ud \bs{z} \in (0,\infty)$. Thus, we may define $\pi_n \in \mathcal{P}(\R^2)$ by
\begin{equation*}
\pi_n(\ud \bs{z}) \eqdefl \dot{\pi}_n (\bs{z})\ud \bs{z}, \quad \textrm{where} \quad \dot{\pi}_n(\bs{z}) \eqdefl \frac{h_n(\bs{z})^2}{c_n}. 
\end{equation*}
The probability measure $\pi_n$ is a so-called \emph{concentration measure}, analogous to the ones appearing in earlier papers on ambit processes \cite[p.\ 265]{BNG2011} and Brownian semistationary processes \cite[p.\ 1166]{BCP2011}. Roughly speaking, the strength of the interdependence of the increments \eqref{eq:pvariation} is related to how dispersed $\pi_n$ is. Our limit theorems are based on the key assumption that the interdependence is not ``too strong'', in the sense that the sequence $\pi_1,\pi_2,\ldots$ converges weakly to a probability measure that is supported on a ``small'' subset of $\R^2$. 

\revised{
\begin{rem}\label{different-lags}
In addition to the square lattices $\mathcal{S}_n$, $n \in \N$, one could also consider observations of $Y$ on more general rectangular lattices $\mathcal{R}_n$, $n \in \N$, where
\begin{equation*}
\mathcal{R}_n \eqdefl \bigg\{ \bigg(\frac{i}{m^{(1)}_n},\frac{j}{m^{(2)}_n} \bigg) : i = 0,1,\ldots,m^{(1)}_n, \, j = 0,1,\ldots,m^{(2)}_n \bigg\}, 
\end{equation*}
and $\big(m^{(1)}_n\big)_{n \in \N}$,\, $\big(m^{(2)}_n\big)_{n \in \N} \subset \N$ are such that $m^{(1)}_n$,\, $m^{(2)}_n \rightarrow \infty$ as $n \rightarrow \infty$. The $p$-th power variation of $Y$ over $\mathcal{R}_n$ can be defined as
\begin{multline*}
\widetilde{V}^{(p)}_{(s,t)}\big(k^{(1)},k^{(2)},n\big)\\ \eqdefl\sum_{i=1}^{\lfloor m^{(1)}_n s/k^{(1)} \rfloor}\sum_{j=1}^{\lfloor m^{(2)}t/k^{(2)} \rfloor} \Bigg| Y \Bigg( \bigg(\frac{k^{(1)} i - 1}{m^{(1)}_n}, \frac{k^{(1)} i}{m^{(1)}_n}\bigg] \times \bigg(\frac{k^{(2)} j - 1}{m^{(2)}_n}, \frac{k^{(2)} j}{m^{(2)}_n}\bigg]
\Bigg)\Bigg|^p, \quad (s,t) \in [0,1]^2,
\end{multline*}
with two thinning parameters $k^{(1)}$, $k^{(2)} \in \N$. Moreover, the corresponding concentration measure $\tilde{\pi}_n$ is defined via a density that is the square of the function $\tilde{h}_n \in L^2(\R^2)$, given by
\begin{equation*}
\tilde{h}_n(s,t) \eqdefl  g(s,t) - g\big(s-1/m^{(1)}_n,t\big) - g\big(s,t-1/m^{(2)}_n\big) + g\big(s-1/m^{(1)}_n,t-1/m^{(2)}_n\big),
\end{equation*}
divided by $\tilde{c}_n \eqdefl \int_{\R^2} \tilde{h}_n (\bs{z})^2 \ud \bs{z}$.
\end{rem}
}

\subsection{Limit theorems}

We state now the main results of the paper. Their proofs, along with some auxiliary lemmas, are deferred to Sections \ref{sec:lln} and \ref{sec:clt}. In this section, $(k_n)_{n \in \N}$ stands for a \revised{fixed } non-decreasing sequence of natural numbers, which we shall use as the values of the thinning parameter, such that 
$\varepsilon_n \eqdefl k_n/n \rightarrow 0$. However, the assumption that $k_n \rightarrow \infty$ is not imposed yet.

Our first result is a functional law of large numbers for $V^{(p)}(k_n,n)$. The key assumption, which was alluded to above, behind the law of large numbers is the following.

\begin{asm}\label{asm:lln}
There exists $\pi \in \mathcal{P}(\R^2)$ such that $\lambda_2(\supp \, \pi)=0$ and $\pi_n \stackrel{w}{\rightarrow} \pi$.
\end{asm}

The condition $\lambda_2(\supp \, \pi)=0$ holds, for example, when $\pi$ is concentrated on a curve and, in particular, when $\pi$ is a convex combination of finitely many Dirac measures. Examples of weight functions $g$ that satisfy Assumption \ref{asm:lln} are given in Section \ref{subs:weight}.

In the statements below, $D([0,1]^2)\subset\R^{[0,1]^2}$ stands for the natural two-parameter generalization of the c\`adl\`ag space $D([0,1])\subset \R^{[0,1]}$. We endow this space with the uniform topology. Appendix \ref{app:skorohodspace} recalls the precise definition of $D([0,1]^2)$, along with some useful related facts.

\begin{thm}[Law of large numbers]\label{thm:powerlln}
If Assumption \ref{asm:lln} holds, then
\begin{equation*}
\frac{\varepsilon^2_n}{c^{p/2}_{n}}V^{(p)}(k_n,n) \xrightarrow[ n\rightarrow \infty]{\prob} m_p \Sigma^{(p,\pi)} \quad \textrm{in $D([0,1])^2$,}
\end{equation*}
where
\begin{equation*}
\Sigma^{(p,\pi)}_{(s,t)} \eqdefl \int_0^s \int_0^t \bigg(\int \sigma_{(u-\xi,v-\tau)}^2\pi(\ud \xi,\ud \tau)\bigg)^{p/2} \ud u \ud v, \quad (s,t) \in [0,1]^2.
\end{equation*}
\end{thm}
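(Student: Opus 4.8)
The plan is to work conditionally on the volatility field $\sigma$, exploiting that, given $\sigma$, the increments $Y(R^{(n)}_{(k_ni,k_nj)})$ form a centred Gaussian family with known conditional variances. Writing $\theta^{(n)}_{(i,j)}\eqdefl\E\big[Y(R^{(n)}_{(i,j)})^2\mid\sigma\big]$, the Wiener isometry together with the substitution $\xi=i/n-u$, $\tau=j/n-v$ gives
\begin{equation*}
\theta^{(n)}_{(i,j)}=\int h_n(i/n-u,j/n-v)^2\sigma^2_{(u,v)}\,\ud u\,\ud v=c_n\int\sigma^2_{(i/n-\xi,j/n-\tau)}\,\pi_n(\ud\xi,\ud\tau).
\end{equation*}
I would then split the scaled power variation into its conditional mean and a fluctuation,
\begin{equation*}
\frac{\varepsilon_n^2}{c_n^{p/2}}V^{(p)}_{(s,t)}(k_n,n)=A_n(s,t)+B_n(s,t),\qquad A_n(s,t)\eqdefl\E\Big[\tfrac{\varepsilon_n^2}{c_n^{p/2}}V^{(p)}_{(s,t)}(k_n,n)\,\Big|\,\sigma\Big],
\end{equation*}
and treat the two pieces separately. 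Note that, by continuity and strict positivity on the compact domain, $\sigma$ is bounded away from $0$ and $\infty$, so $\theta^{(n)}\asymp c_n$ uniformly.

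For the conditional mean, the Gaussian moment identity yields $\E\big[|Y(R^{(n)}_{(i,j)})|^p\mid\sigma\big]=m_p(\theta^{(n)}_{(i,j)})^{p/2}$, so that, since $k_ni/n=\varepsilon_ni$,
\begin{equation*}
A_n(s,t)=m_p\,\varepsilon_n^2\sum_{i=1}^{\lfloor s/\varepsilon_n\rfloor}\sum_{j=1}^{\lfloor t/\varepsilon_n\rfloor}\bigg(\int\sigma^2_{(\varepsilon_ni-\xi,\varepsilon_nj-\tau)}\,\pi_n(\ud\xi,\ud\tau)\bigg)^{p/2}.
\end{equation*}
This is a Riemann sum on the grid $\varepsilon_n\Z^2\cap([0,s]\times[0,t])$. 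Since $\sigma^2$ is bounded and uniformly continuous and each $\pi_n$ is supported in a fixed compact set $K$, the weak convergence $\pi_n\stackrel{w}{\rightarrow}\pi$ upgrades to the uniform statement $\sup_{(u,v)}\big|\int\sigma^2_{(u-\xi,v-\tau)}(\pi_n-\pi)(\ud\xi,\ud\tau)\big|\to0$; combined with continuity of $x\mapsto x^{p/2}$ and standard Riemann-sum convergence, this gives $A_n(s,t)\to m_p\Sigma^{(p,\pi)}_{(s,t)}$ pointwise.

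For the fluctuation, the summands of $B_n$ are conditionally centred, so I would estimate $\E[B_n(s,t)^2\mid\sigma]$. Denoting by $\rho^{(n)}_{(i,j),(i',j')}$ the conditional correlation of two increments, the Gaussian covariance bound for absolute powers (the leading nonconstant Hermite contribution being of order two) gives $\big|\Cov(|Z_1|^p,|Z_2|^p)\big|\lesssim_p(\theta_1\theta_2)^{p/2}\rho^2$; together with $\theta^{(n)}\asymp c_n$ this reduces the estimate to $\E[B_n(s,t)^2\mid\sigma]\lesssim_{p,\sigma}\varepsilon_n^4\sum_{i,i'}\sum_{j,j'}(\rho^{(n)}_{(i,j),(i',j')})^2$. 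A substitution and Cauchy--Schwarz bound each correlation by the Hellinger-type affinity $\beta_n(d)\eqdefl\int\sqrt{\dot\pi_n(w)\dot\pi_n(w-d)}\,\ud w$ at the displacement $d=(\varepsilon_n(i-i'),\varepsilon_n(j-j'))$, so everything reduces to showing $\varepsilon_n^2\sum_{\ell,m\in\Z}\beta_n(\varepsilon_n\ell,\varepsilon_n m)^2\to0$. The decisive input, extracted from Assumption \ref{asm:lln}, is that $\|\sqrt{\dot\pi_n}\|_{L^1}\to0$: for any open $U\supset\supp\pi$ the Portmanteau theorem gives $\pi_n(U^c)\to0$, and then $\int\sqrt{\dot\pi_n}\le\sqrt{\lambda_2(U)}+\sqrt{\lambda_2(K)}\sqrt{\pi_n(U^c)}$, which is eventually below $2\sqrt{\lambda_2(U)}$; letting $\lambda_2(U)\downarrow\lambda_2(\supp\pi)=0$ proves the claim. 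Using $\beta_n\le1$ (so $\beta_n^2\le\beta_n$) together with $\int_{\R^2}\beta_n(d)\,\ud d=\|\sqrt{\dot\pi_n}\|_{L^1}^2$ then drives the off-diagonal sum to zero, the diagonal $\ell=m=0$ contributing only $\varepsilon_n^2$. I expect this correlation-decay estimate to be the main obstacle: the discrete double sum of squared affinities must be compared, uniformly in $n$, with the integral of an $n$-dependent kernel, so the Riemann-sum approximation has to be carried out carefully rather than quoted.

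To conclude, integrating the conditional bound over $\sigma$ (legitimate since all constants are deterministic) gives $B_n(s,t)\to0$ in $L^2$, whence $\frac{\varepsilon_n^2}{c_n^{p/2}}V^{(p)}_{(s,t)}(k_n,n)\to m_p\Sigma^{(p,\pi)}_{(s,t)}$ in probability for each fixed $(s,t)$. Since $(s,t)\mapsto V^{(p)}_{(s,t)}(k_n,n)$ is non-decreasing in each coordinate and the limit $\Sigma^{(p,\pi)}$ is continuous and non-decreasing, a two-parameter Dini/P\'olya argument upgrades pointwise convergence in probability on a countable dense set to uniform convergence in probability, that is, convergence in $D([0,1]^2)$ under the uniform topology.
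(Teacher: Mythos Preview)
Your overall plan coincides with the paper's: condition on $\sigma$, split the scaled power variation into its conditional mean and a centred fluctuation, treat the mean as a Riemann sum and pass to the limit using $\pi_n\stackrel{w}{\to}\pi$, control the fluctuation via the Hermite-rank-$2$ covariance bound, and finally upgrade pointwise to uniform convergence by the two-parameter Dini argument (monotone paths, continuous limit). The treatment of $A_n$ is correct and essentially identical to the paper's.

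The divergence is exactly where you flag it. Your observation that $\|\sqrt{\dot\pi_n}\|_{L^1}\to0$ under Assumption~\ref{asm:lln} is correct and elegant, and $\int_{\R^2}\beta_n(d)\,\ud d=\|\sqrt{\dot\pi_n}\|_{L^1}^2$ is right. But the step you then need, namely that the lattice sample
\[
\varepsilon_n^2\sum_{\ell,m}\beta_n(\varepsilon_n\ell,\varepsilon_n m)
\]
is controlled by $\int\beta_n$, is not a Riemann-sum approximation in any usable sense: $\beta_n=\sqrt{\dot\pi_n}*\sqrt{\dot\pi_n(-\,\cdot)}$ depends on $n$ and may concentrate at scale $\varepsilon_n$ or finer, so sampling it on the $\varepsilon_n$-lattice can miss or overweight mass arbitrarily. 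I do not see how to close this without a different idea; the $L^1$ smallness of $\beta_n$ alone does not control a lattice sum.

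The paper sidesteps the comparison entirely. It writes the variance \emph{exactly} as an integral of a step function,
\[
\varepsilon_n^4 c_n^{-p}\Var_\wn\big[V^{(p)}_{(s,t)}(k_n,n)\big]\ \lesssim_{\sigma,p}\ \int_0^s\!\int_0^t\!\int_0^s\!\int_0^t \Pi_n(u_1,v_1,u_2,v_2)\,\ud u_1\,\ud v_1\,\ud u_2\,\ud v_2,
\]
with $\Pi_n(u_1,v_1,u_2,v_2)=\beta_n\big(\lceil u_1\rceil_n-\lceil u_2\rceil_n,\lceil v_1\rceil_n-\lceil v_2\rceil_n\big)\leqslant1$, and then shows $\Pi_n\to0$ pointwise almost everywhere, so dominated convergence finishes. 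The pointwise argument combines a shrinking-neighbourhood estimate $\pi_n\big((\supp\pi)^{a_n}\big)\to1$ with a Urysohn cutoff and the weak convergence $\pi_n\stackrel{w}{\to}\pi$ to obtain
\[
\limsup_{n\to\infty}\Pi_n(u_1,v_1,u_2,v_2)^2\ \leqslant\ \pi\big(\supp\pi+(u_1-u_2,v_1-v_2)\big),
\]
and the right-hand side vanishes for almost every displacement because, by Fubini and translation invariance,
\[
\int_{\R^2}\pi(\supp\pi-\bs{z})\,\lambda_2(\ud\bs{z})=\lambda_2(\supp\pi)=0.
\]
This pointwise-plus-DCT mechanism is what replaces your Riemann-sum comparison; to make your route work you would need a device of comparable strength, and the $L^1$ bound on $\beta_n$ does not appear to supply one.
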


\begin{rem}Assumption \ref{asm:lln} is slightly more restrictive than mere \emph{mutual singularity} of $\pi$ and $\lambda_2$. Indeed, the proof of Theorem \ref{thm:powerlln} uses a separation argument that relies on the existence of a \emph{closed} $\lambda_2$-null set with full $\pi$-measure.
\end{rem}

The case where $\pi = \delta_{(s_0,t_0)}$, for some $(s_0,t_0) \in [0,1]^2$, is of particular interest. Then, we have
\begin{equation*}
\Sigma^{(p,\pi)}_{(s,t)} = \int_{-s_0}^{s-s_0} \int_{-t_0}^{t-t_0} \sigma_{(u,v)}^p \ud u \ud v.
\end{equation*}
From a practical point of view, the case where $\pi$ is \emph{not} a Dirac measure is somewhat undesirable. Then the random field $\Sigma^{(p,\pi)}$ ``sees'' merely a weighted space--time average of $\sigma$, and inferring the ``pure'' $\sigma$ may become impossible. 

Our second result is a functional central limit theorem for $V^{(p)}(k_n,n)$. Here, we concentrate on the case where $\pi$ is a Dirac measure and $k_n \rightarrow \infty$. For the needs of the central limit theorem, we refine Assumption \ref{asm:lln} by quantifying the speed of the convergence $\pi_n \stackrel{w}{\rightarrow} \pi$ as follows.

\begin{asm}\label{asm:clt}

There exist open sets $E_1,E_2,\ldots \subset \R^2$ and $\bs{z}_0 \eqdefl (s_0,t_0) \in [0,1]^2$ such that for all $n \in \N$,
\begin{enumerate}[label=(\roman*),ref=\roman*]
\item\label{c:null} $\bs{z}_0 \in \overline{E_n}$,
\item\label{c:inter} $\lambda_2 \big(E_n \cap (E_n+(s,t))\big)=0$ for any $(s,t) \in \R^2$ such that $|s|\vee |t| \geqslant \varepsilon_n$,
\item\label{c:decay} $\pi_n(\R^2 \setminus E_n ) = o(\varepsilon_n^2)$,
\end{enumerate}
\revised{where $\varepsilon_n = k_n/n$, as defined above.}
\end{asm}

The sets $E_1,E_2,\ldots$ should be seen as shrinking ``neighborhoods'' of the point $\bs{z}_0$.
In fact, items \eqref{c:null} and \eqref{c:inter} imply that for all $n\in \N$, 
\begin{equation*}
E_n \subset [s_0-\varepsilon_n,s_0+\varepsilon_n]\times[t_0-\varepsilon_n,t_0+\varepsilon_n].
\end{equation*}
Thus, by item \eqref{c:decay}, Assumption \ref{asm:lln} holds with $\pi = \delta_{\bs{z}_0}$. Concrete examples of specifications of the weight function $g$ that satisfy Assumption \ref{asm:clt} are provided in \eqref{eq:gspec} and \eqref{eq:gspec2}, below.

The central limit theorem is stated in terms of \emph{stable convergence in law}, a notion due to R\'enyi \cite{Ren1963}, which is the standard mode of convergence used in central limit theorems for power, bipower, and multipower variations of stochastic processes \revised{(see also \cite{AE1978} for more details on stable convergence). } For the convenience of the reader, we recall here the definition.

\begin{defn}[Stable convergence in law] Let $U_1,U_2,\ldots$ be random elements in a metric space $\mathcal{U}$, defined on the probability space $(\Omega,\mathcal{F},\prob)$, and let $U$ be a random element in $\mathcal{U}$, defined on $(\Omega',\mathcal{F}',\prob')$, an extension of $(\Omega,\mathcal{F},\prob)$. When $\mathcal{G}\subset\mathcal{F}$ is a $\sigma$-algebra, we say that $U_1,U_2,\ldots$ converge \emph{$\mathcal{G}$-stably in law} to $U$ and write $U_n \stackrel{L_\mathcal{G}}{\longrightarrow} U$, if 
\begin{equation}\label{eq:stable}
\E[f(U_n) V] \xrightarrow[ n\rightarrow \infty]{} \E'[f(U) V]
\end{equation}
for any bounded, $\mathcal{G}$-measurable random variable $V$ and bounded $f \in  C(\mathcal{U},\R)$.
\end{defn}

\begin{rem} Choosing $V = 1$ in \eqref{eq:stable} shows that stable convergence implies ordinary convergence in law. However, the converse is not true in general. 
\end{rem}

\begin{thm}[Central limit theorem]\label{thm:powerclt}
If Assumption \ref{asm:clt} holds, then
\begin{equation}\label{eq:powerlln}
\frac{\varepsilon_n}{c^{p/2}_n} \big(V^{(p)}(k_n,n) - \E_\wn\big[V^{(p)}_\cdot(k_n,n)\big] \big) \xrightarrow[ n\rightarrow \infty]{L_\mathcal{F}}  (m_{2p}-m^2_p)^{1/2} \Xi^{(p)} \quad \textrm{in $D([0,1]^2)$,}
\end{equation}
where
\begin{equation*}
\Xi^{(p)}_{(s,t)} \eqdefl \int\limits_{[-s_0,s-s_0]\times[-t_0,t-t_0]} \sigma^p_{(u,v)} \wn^\perp(\ud u, \ud v), \quad (s,t) \in [0,1]^2
\end{equation*}
and $W^{\perp}$ is a white noise on $[0,1]^2$ with control measure $\lambda_2$, independent of $\mathcal{F}$, defined on an extension of $(\Omega,\mathcal{F},\prob)$. 
\end{thm}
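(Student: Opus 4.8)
The plan is to work conditionally on the volatility field $\sigma$, exploiting the product structure of the probability space: since $\sigma$ and $\wn$ are independent, conditionally on $\sigma$ the increments $Y\big(R^{(n)}_{(k_ni,k_nj)}\big)$ form a centered Gaussian family, so the centered statistic on the left of \eqref{eq:powerlln} is a functional of the Gaussian noise $\wn$ that admits a Wiener chaos decomposition. I would reduce Theorem \ref{thm:powerclt} to two separate claims: (a) convergence of the finite-dimensional distributions, in the $\mathcal{F}$-stable sense of \eqref{eq:stable}, to those of $(m_{2p}-m^2_p)^{1/2}\Xi^{(p)}$; and (b) tightness of the normalized statistic in $D([0,1]^2)$ under the uniform topology, via the criterion recalled in Appendix \ref{app:skorohodspace}.

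For the finite-dimensional step, I would first normalize each increment. Writing $v^{(n)}_{(i,j)}\eqdefl \E_\wn\big[Y(R^{(n)}_{(k_ni,k_nj)})^2\big] = c_n\int \sigma^2_{\bs{z}^{(n)}_{(i,j)}-\bs\zeta}\,\pi_n(\ud\bs\zeta)$ with $\bs{z}^{(n)}_{(i,j)}\eqdefl(i\varepsilon_n,j\varepsilon_n)$, the ratio $v^{(n)}_{(i,j)}/c_n$ converges to $\sigma^2_{\bs{z}^{(n)}_{(i,j)}-\bs{z}_0}$ by item \eqref{c:decay} of Assumption \ref{asm:clt} and the continuity of $\sigma$, since $\pi_n$ concentrates near $\bs{z}_0$. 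Expanding the even function $x\mapsto|x|^p$ into Hermite polynomials, $|x|^p - m_p = \sum_{q\geqslant 1}a_{2q}H_{2q}(x)$, gives the conditional chaos decomposition of the centered increment and hence of the whole statistic; crucially only \emph{even} chaoses of order $\geqslant 2$ appear, so the statistic is orthogonal to the first Wiener chaos for every $n$. The limiting conditional covariance is then a Riemann sum: the diagonal contribution $\varepsilon_n^2\sum_{i,j}(v^{(n)}_{(i,j)}/c_n)^{p}\sum_{q}a_{2q}^2(2q)!$ converges to $(m_{2p}-m^2_p)\int_0^s\int_0^t\sigma^{2p}_{(u-s_0,v-t_0)}\,\ud u\,\ud v$ (using $\sum_q a_{2q}^2(2q)!=\Var(|X|^p)=m_{2p}-m^2_p$ for $X\sim N(0,1)$), matching $\Var_\wn\big((m_{2p}-m^2_p)^{1/2}\Xi^{(p)}_{(s,t)}\big)$, while the off-diagonal contribution vanishes because the effective supports of $h_n(\bs{z}^{(n)}_{(i,j)}-\cdot)$ attached to distinct lattice points are asymptotically disjoint by items \eqref{c:null}--\eqref{c:inter}, with the residual mass controlled by item \eqref{c:decay}.

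With the covariances identified, I would invoke the multivariate contraction (fourth-moment) criterion of Nualart and Peccati, in its conditional form, to upgrade covariance convergence to joint conditional convergence to a Gaussian field: the thinning forces the relevant contraction norms to vanish, so each chaos component converges and the cross-chaos covariances disappear. The $\mathcal{F}$-stability then follows from the even-chaos structure: since the statistic is uncorrelated with the first chaos and its contractions against any fixed family $\wn(\phi_1),\ldots,\wn(\phi_d)$ vanish, the vector $\big(U_n,\wn(\phi_1),\ldots,\wn(\phi_d)\big)$ converges jointly in law (conditionally on $\sigma$) with the limit of the statistic independent of the Gaussian coordinates; approximating an arbitrary bounded $\mathcal{F}$-measurable $V$ by functions of $\sigma$ and finitely many such $\wn(\phi_i)$ yields \eqref{eq:stable}, the limit law being realized through the independent white noise $\wn^\perp$.

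Finally, for tightness I would establish a moment bound for the rectangular increments of the normalized statistic: conditionally on $\sigma$ it lives, after truncating the Hermite expansion, in a fixed sum of Wiener chaoses, so hypercontractivity reduces higher moments to second moments, and together with the asymptotic independence from thinning this should give a bound of the form $\E\big[|U_n(R)|^{2\gamma}\big]\lesssim \lambda_2(R)^{1+\delta}$ for suitable exponents $\gamma,\delta>0$, feeding into the two-parameter criterion of Appendix \ref{app:skorohodspace}; since $\Xi^{(p)}$ has continuous paths, the uniform topology is the natural choice, and it remains only to note that the maximal jump of $U_n$ vanishes, which follows from the same estimates. The main obstacle I expect is the stability upgrade combined with controlling the \emph{full} Hermite expansion uniformly in $n$ --- one must show that the tail chaoses contribute negligibly to both the covariance and the contractions, for which items \eqref{c:inter} and \eqref{c:decay} are essential --- while the two-parameter tightness, requiring control of rectangular rather than one-dimensional increments, is a secondary but still delicate point.
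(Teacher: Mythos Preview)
Your plan matches the paper's proof closely: condition on $\sigma$, expand each centered power in Hermite polynomials to obtain a chaos decomposition of $Z^{(n)}$, apply the Nualart--Peccati/Peccati--Tudor multivariate CLT (the paper's Lemma~\ref{lem:chaosclt}) for finite-dimensional convergence, and lift to $\mathcal{F}$-stability by showing joint convergence with generators of $\mathcal{F}_\wn$ (the paper uses the four quadrant Brownian sheets together with Lemma~\ref{lem:stable}, which is precisely your approximation argument). Two points where the paper's execution differs from your sketch deserve mention.

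First, the single quantitative fact that drives both the vanishing of contractions and the tightness bound is the correlation estimate
\[
\overline{\rho}_n \eqdefl \sup_{(i_1,j_1)\neq(i_2,j_2)}\big|\big\langle \bar f_{n,(i_1,j_1)},\bar f_{n,(i_2,j_2)}\big\rangle_\Hil\big| = o(\varepsilon_n),
\]
which the paper isolates as Lemma~\ref{lem:corest} and proves directly from items \eqref{c:inter}--\eqref{c:decay} via Cauchy--Schwarz. You allude to this as ``asymptotic independence from thinning'', but the explicit rate $o(\varepsilon_n)$ is what makes the combinatorial counts in the contraction and fourth-moment arguments close up; it should be stated as a lemma rather than left implicit.

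Second, for tightness the paper does \emph{not} truncate the chaos expansion and invoke hypercontractivity. The hypercontractivity constant on the $k$-th chaos grows like $3^k$, so summing the fourth-moment bounds over $k$ is delicate, and controlling the truncation remainder in a norm strong enough for the Bickel--Wichura increment condition (not merely in $L^2$) is not immediate. Instead the paper proves a direct fourth-moment bound (Lemma~\ref{lem:4thmom}) for sums $\sum_i u_p(X_i)$ with $(X_i)$ standard Gaussian and $\sup_{i\neq j}|\E[X_iX_j]|\leqslant\rho$, based on Soulier's product-moment inequality; combined with $\overline{\rho}_n=o(\varepsilon_n)$ this yields $\E_\wn\big[Z^{(n)}(R)^4\big]\lesssim \lambda_2(R)^2$ for lattice rectangles $R$ and feeds the Bickel--Wichura criterion for the \emph{Skorohod} topology. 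Only afterwards does the paper pass to the uniform topology, using that $\Xi^{(p)}$ is continuous (Lemma~\ref{lem:skorohod}). Your suggestion of working directly in the uniform topology and checking that ``the maximal jump of $U_n$ vanishes'' is not how the paper proceeds, and the criterion you reference from Appendix~\ref{app:skorohodspace} is a Skorohod-topology criterion.
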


\revised{
\begin{rem}
Theorems \ref{thm:powerlln} and \ref{thm:powerclt} could be extended to the setting of Remark \ref{different-lags} as follows. Let us introduce two non-decreasing sequences $\big(k^{(1)}_n\big)_{n \in \N}$,\, $\big(k^{(2)}_n\big)_{n \in \N}\subset \N$ specifying the values of the thinning parameters, and define $\varepsilon^{(1)}_n \eqdefl k^{(1)}_n/m^{(1)}_n$, $\varepsilon^{(2)}_n \eqdefl k^{(2)}_n/m^{(2)}_n$, $n \in \N$. We assume that $\varepsilon^{(1)}_n$, $\varepsilon^{(2)}_n \rightarrow 0$ as $n \rightarrow \infty$. Provided that $\tilde{\pi}_n \stackrel{w}{\rightarrow} \pi$, where $\pi$ is as in Assumption \ref{asm:lln}, a law of large numbers holds for the random fields 
\begin{equation*}
\frac{\varepsilon^{(1)}_n\varepsilon^{(2)}_n}{\tilde{c}^{p/2}_n} \widetilde{V}^{(p)}\big(k^{(1)}_n,k^{(2)}_n,n\big), \quad n \in \N,
\end{equation*}
in $D([0,1]^2)$ with the limit given in Theorem \ref{thm:powerlln}. With regards to the central limit theorem, Assumption \ref{asm:clt} needs to be modified as follows. Condition \eqref{c:inter} is required to hold for any $(s,t) \in \R^2$ such that $|s| \geqslant \varepsilon^{(1)}_n$ or $|t|\geqslant \varepsilon^{(2)}_n$. Moreover, condition \eqref{c:decay} should be replaced with
$\tilde{\pi}_n(\R^2 \setminus E_n) = o \big(\varepsilon^{(1)}_n\varepsilon^{(2)}_n \big)$. Under Assumption \ref{asm:clt}, with these modifications, the random fields
\begin{equation*}
\frac{\sqrt{\varepsilon^{(1)}_n\varepsilon^{(2)}_n}}{\tilde{c}^{p/2}_n} \big(\widetilde{V}^{(p)}\big(k^{(1)}_n,k^{(2)}_n,n\big) - \E_\wn\big[\widetilde{V}^{(p)}_\cdot \big(k^{(1)}_n,k^{(2)}_n,n\big)\big] \big), \quad n \in \N,
\end{equation*}
satisfy a stable central limit theorem in $D([0,1]^2)$ with the limit given in Theorem \ref{thm:powerclt}. 
\end{rem}
}

\subsection{Weight functions}\label{subs:weight}

We shall now briefly discuss some examples of weight functions $g$ that satisfy Assumptions \ref{asm:lln} or \ref{asm:clt}.

\subsubsection{Uniform weight function}

Perhaps the simplest possible weight function is such that it assigns uniform weight over a rectangle.
More concretely, let
\begin{equation*}
g \eqdefl \mathbf{1}_{[s_1,s_2]\times[t_1,t_2]},
\end{equation*}
where $0 \leqslant s_1 < s_2 \leqslant 1$ and $0 \leqslant t_1 < t_2 \leqslant 1$. For any $n > 1/\big((s_2-s_1)\wedge (t_2-t_1)\big)$, we have
\begin{equation*}
\begin{split}
h_n &= \mathbf{1}_{[s_1,s_1+1/n]\times[t_1,t_1+1/n]} - \mathbf{1}_{[s_2,s_2+1/n]\times[t_1,t_1+1/n]}\\ & \quad - \mathbf{1}_{[s_1,s_1+1/n]\times[t_2,t_2+1/n]} +\mathbf{1}_{[s_2,s_2+1/n]\times[t_2,t_2+1/n]} \qquad \revised{\textrm{almost everywhere.}}
\end{split}
\end{equation*}
It is easy to check that then $c_n = 4/n$ and that Assumption \ref{asm:lln} holds
 with $\pi = (1/4)(\delta_{(s_1,t_1)}+\delta_{(s_2,t_1)}+\delta_{(s_1,t_2)}+\delta_{(s_2,t_2)})$. Thus, Theorem \ref{thm:powerlln} implies that
\begin{equation*}
V^{(2)}(1,n)\xrightarrow[ n\rightarrow \infty]{\prob} \int_0^\cdot \int_0^\cdot \big(\sigma^2_{(u-s_1,v-t_1)}+\sigma^2_{(u-s_2,v-t_1)}+\sigma^2_{(u-s_1,v-t_2)}+\sigma^2_{(u-s_2,v-t_2)}\big) \ud u \ud v
\end{equation*}
in $D([0,1]^2)$. Assumption \ref{asm:clt}, of course, cannot hold under this specification of $g$.

\subsubsection{Weight function with a singularity}

To satisfy Assumption \ref{asm:clt}, the weights imposed by $g$ should be concentrated to a neighborhood of some point in $[0,1]^2$. For example, let us consider $g \in L^2(\R^2)$ with a singularity at zero, given by
\begin{equation}\label{eq:gspec}
g(s,t) \eqdefl \begin{cases} (s \vee t)^{-\alpha}\ell(s \vee t), & (s,t) \in (0,1)^2,\\
0, & (s,t) \in \R^2 \setminus (0,1)^2,
\end{cases} 
\end{equation}  
where $\alpha \in (0,1)$ and $\ell \in C^1(0,1)$ is such that $\lim_{s \rightarrow 0+}\ell(s) \neq 0$, $\lim_{s \rightarrow 1-}\ell(s) = 0$, and $\|\ell'\|_\infty \eqdefl \sup_{s \in (0,1)}|\ell'(s)|<\infty$. Note that, necessarily, we have also $\| \ell \|_\infty \eqdefl \sup_{s \in (0,1)}|\ell(s)|<\infty$. A simple example of such a function is $\ell(s) \eqdefl 1-s$.

Assumption \ref{asm:clt} holds under this specification provided that the thinning parameter $k_n$ has suitably fast rate of growth. The following result gives a sufficient condition in terms of the asymptotic behavior of $\varepsilon_n$. Its proof is carried out in Section \ref{sec:singular}.

\begin{prop}[Weight function with a singularity]\label{prop:examplekernel}
Suppose that $g$ is given by \eqref{eq:gspec}.
\begin{enumerate}[label=(\arabic*),ref=\arabic*]
\item Assumption \ref{asm:lln} holds with $\pi = \delta_{\bs{0}}$,
\item If $\varepsilon_n \asymp n^{-\kappa}$, where $0 <\kappa \leqslant \alpha$ when $\alpha \in (0,1/2)$ and $0 < \kappa < (2 \alpha + 1)/(2\alpha +3)$ when $\alpha \in [1/2,1)$, then Assumption \ref{asm:clt} holds with $\bs{z}_0 = \bs{0}$ and $E_n = (0,\varepsilon_n)^2$. 
\end{enumerate} 
\end{prop}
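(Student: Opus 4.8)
\textbf{Proof plan for Proposition \ref{prop:examplekernel}.}

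The plan is to compute $h_n$ explicitly for the kernel \eqref{eq:gspec}, identify its essential support, and then verify the three items of Assumption \ref{asm:clt} directly with the stated choices $\bs{z}_0 = \bs{0}$ and $E_n = (0,\varepsilon_n)^2$. First I would establish part (1). Since $g$ is supported on $(0,1)^2$ and blows up like $(s\vee t)^{-\alpha}$ near the origin, the second-order increment $h_n(s,t) = g(s,t) - g(s-1/n,t) - g(s,t-1/n) + g(s-1/n,t-1/n)$ inherits a singularity concentrated near $\bs{0}$. The heuristic is that away from the diagonal and the origin, $g$ is $C^1$ with bounded derivatives (using $\ell \in C^1$ with bounded $\ell'$), so the second difference there is $O(n^{-2})$ pointwise and contributes negligibly to the normalized mass $\pi_n$; the dominant contribution to $c_n = \|h_n\|_{L^2}^2$ comes from the region within $O(1/n)$ of $\bs{0}$. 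Quantifying this, I expect $c_n \asymp_\alpha n^{2\alpha - 2}$ when $\alpha \in (1/2,1)$ (singularity-dominated) with logarithmic or boundary corrections at $\alpha = 1/2$, and I would show $\pi_n(B(\bs{0},r)^c) \to 0$ for every $r>0$, giving $\pi_n \stackrel{w}{\rightarrow} \delta_{\bs{0}}$ and hence Assumption \ref{asm:lln}.

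For part (2), items \eqref{c:null} and \eqref{c:inter} of Assumption \ref{asm:clt} are geometric and essentially immediate for $E_n = (0,\varepsilon_n)^2$: clearly $\bs{0} \in \overline{E_n}$, and two axis-aligned open squares of side $\varepsilon_n$ translated by $(s,t)$ with $|s|\vee|t| \geqslant \varepsilon_n$ have disjoint interiors, so their intersection is $\lambda_2$-null. The entire difficulty is concentrated in item \eqref{c:decay}, namely showing
\begin{equation*}
\pi_n\big(\R^2 \setminus (0,\varepsilon_n)^2\big) = \frac{1}{c_n}\int_{\R^2 \setminus (0,\varepsilon_n)^2} h_n(\bs{z})^2 \, \ud\bs{z} = o(\varepsilon_n^2).
\end{equation*}
The plan is to split the complementary region into pieces and bound $h_n^2$ on each: the ``tail'' region where all four arguments of $g$ lie in $(0,1)^2$ away from the singularity, where a Taylor/mean-value estimate using $\|\ell'\|_\infty$ and $\|\ell\|_\infty$ gives $|h_n(s,t)| \lesssim_\alpha n^{-2}(s\vee t)^{-\alpha - 2}$; the ``boundary'' regions near the edges of $(0,1)^2$ where only some of the four shifted copies of $g$ are nonzero, exploiting $\ell(1-) = 0$ to control the otherwise-present discontinuity; and the annular region between $(0,\varepsilon_n)^2$ and a fixed-size neighborhood of $\bs{0}$, where the singularity is active but the square $E_n$ has been excised. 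Integrating these bounds and dividing by $c_n \asymp n^{2\alpha-2}$ produces a power of $n$ that must be compared against $\varepsilon_n^2 \asymp n^{-2\kappa}$, and tracking the exponents is exactly where the two threshold conditions on $\kappa$ (the regime split at $\alpha = 1/2$ and the bound $(2\alpha+1)/(2\alpha+3)$ for $\alpha \geqslant 1/2$) will emerge.

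The main obstacle, I expect, is the bookkeeping in item \eqref{c:decay}: one must obtain \emph{sharp} pointwise bounds on $h_n$ in each subregion (singularity, diagonal, boundary, tail) rather than crude ones, because a lossy bound would degrade the admissible range of $\kappa$. In particular, the contribution of the annulus just outside $E_n$ but near the origin is the binding constraint, since that is where $h_n$ is largest outside $E_n$; getting its $L^2$-mass to be $o(\varepsilon_n^2 c_n)$ is what forces $\kappa$ below the stated thresholds. A secondary technical point is handling the essential-support and ``almost everywhere'' subtleties of $h_n$ correctly near the lines $s = 1, t = 1$ and the diagonal $s = t$, where the maximum defining $g$ switches which coordinate is active; here the hypotheses $\ell(1-)=0$ and $\ell \in C^1$ are used precisely to prevent spurious boundary discontinuities from dominating. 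Once the pointwise estimates and the value of $c_n$ are pinned down, the verification reduces to elementary (if tedious) integration and exponent arithmetic.
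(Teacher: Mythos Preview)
Your overall strategy---decompose the complement of $E_n$, bound $h_n$ pointwise on each piece, integrate, and compare exponents---is the same as the paper's, and your verification of items \eqref{c:null} and \eqref{c:inter} is fine. But you are missing the structural fact that drives the whole computation: since $g(s,t)=f(s\vee t)$ with $f(r)=r^{-\alpha}\ell(r)$ depends only on $\max(s,t)$, the rectangular increment $h_n$ is \emph{identically zero} in the bulk region $\{t<s-1/n\}$ (and symmetrically $\{s<t-1/n\}$), not merely $O(n^{-2})$. The paper exploits this by using the symmetry $h_n(s,t)=h_n(t,s)$ to reduce to the lower triangle $T_n=\{0<t<s<1+1/n\}$ and then writing $h_n$ explicitly on five pieces: $h_n=f(s)$ on $\tilde E_n=\{0<t<s<1/n\}$, $h_n=f(s)-f(s-1/n)$ on the strip $B^{(1)}_n=\{t\in(0,1/n),\,s\in(\varepsilon_n,1)\}$, $h_n=f(s-1/n)-f(t)$ on the diagonal strip $B^{(2)}_n=\{s-1/n<t<s\}$, $h_n=0$ on the bulk $B^{(3)}_n$, and a small boundary piece $B^{(4)}_n$ near $s=1$. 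The two nonzero strips are handled by the \emph{first-order} mean-value bound $|f(s)-f(s-1/n)|\lesssim n^{-1}(s-1/n)^{-\alpha-1}$, giving $\mu_n(B^{(i)}_n)=O(n^{-3+\kappa(2\alpha+1)})$ for $i=1,2$; the origin gives $\mu_n(\tilde E_n)\gtrsim n^{-2(1-\alpha)}$ for \emph{all} $\alpha\in(0,1)$ (not only $\alpha>1/2$); and $\mu_n(B^{(4)}_n)=o(n^{-2})$ via $\ell(1-)=0$. The threshold then comes from whether the strips or the boundary piece dominate the complement, which is exactly the split at $\kappa=1/(2\alpha+1)$ producing the two ranges $\kappa\le\alpha$ and $\kappa<(2\alpha+1)/(2\alpha+3)$.

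Your ``tail'' estimate $|h_n|\lesssim n^{-2}(s\vee t)^{-\alpha-2}$ is therefore both unnecessary (the true value is zero) and not justified by the hypotheses as stated: an $O(n^{-2})$ bound on a rectangular second difference requires control of the mixed partial $\partial_s\partial_t g$, which $g\in C^1$ alone does not give. Here that mixed partial vanishes off the diagonal, which is precisely why $h_n=0$---so your conclusion is right, but for a much stronger reason than a Taylor bound. Correspondingly, the binding constraint is not an ``annulus near the origin'' but the thin strips $B^{(1)}_n,B^{(2)}_n$ (for large $\kappa$) or the boundary piece $B^{(4)}_n$ (for small $\kappa$). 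Once you recognise the vanishing in $B^{(3)}_n$, the bookkeeping reduces to one-variable integrals of $f$ and $f'$, and the argument is short.
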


\begin{rem}
If we assume further that $\lim_{s \rightarrow 1-}\ell'(s) = 0$, then it is possible to show that $0 < \kappa < (2 \alpha + 1)/(2\alpha +3)$ is a sufficient condition for all $\alpha \in (0,1)$.
\end{rem}

\subsubsection{Weight function supported on a triangle} As another example, let $\alpha\in (1/2,1)$ and $\ell$ as above, and define $g \in L^2(\R^2)$ through
\begin{equation}\label{eq:gspec2}
g(s,t) \eqdefl \begin{cases} t^{-\alpha}\ell(t), & (s,t) \in T,\\
0, & (s,t) \in \R^2 \setminus T,
\end{cases} 
\end{equation}
where
$T \eqdefl \{(s,t) : (1-t)/2 < s < (1+t)/2, \, 0 < t < 1 \}$  
is the isosceles triangle with vertices $(1/2,0)$, $(0,1)$, and $(1,1)$. Such a weight function is typical in space--time modeling of turbulence (see, e.g., \cite{BNES2005,BS2005,SCEPG2004}). Interpreting $s$ as a one-dimensional space variable and $t$ as time, the set $T$ (or more appropriately $-T$) can be seen as a \emph{causality cone}. 

Due to the different shape of the support, under this specification of $g$ we require that the thinning parameter grows at a faster rate compared to the preceding example. The proof of the following result is very similar to the one of Proposition \ref{prop:examplekernel}, so it is merely sketched in Section \ref{subsec:triangle}.

\begin{prop}[Weight function supported on a triangle]\label{prop:trianglekernel}
Suppose that $g$ is given by \eqref{eq:gspec2}. 
\begin{enumerate}[label=(\arabic*),ref=\arabic*]
\item Assumption \ref{asm:lln} holds with $\pi = \delta_{\bs{z}_0}$, where $\bs{z}_0 = (1/2,0)$.
\item If $\varepsilon_n \asymp n^{-\kappa}$, where $\kappa \in \big(0, (2\alpha-1)/(2\alpha+1)\big)$, then Assumption \ref{asm:clt} holds with  $E_n = (1/2-\varepsilon_n/2,1/2+\varepsilon_n/2)\times(0,\varepsilon_n/2)$.
\end{enumerate}
\end{prop}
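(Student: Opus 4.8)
The plan is to follow the proof of Proposition \ref{prop:examplekernel} closely, exploiting a structural simplification peculiar to the triangular support: since $g(s,t)=t^{-\alpha}\ell(t)$ depends on $s$ only through the indicator of $T$, the mixed second difference $h_n$ vanishes identically on the \emph{bulk} of $T$, i.e.\ wherever all four points $(s,t)$, $(s-1/n,t)$, $(s,t-1/n)$, $(s-1/n,t-1/n)$ lie in $T$. Indeed, there the two $s$-shifts cancel because $g$ is $s$-independent inside $T$, so that $h_n$ is supported in the $1/n$-neighborhood $T^{1/n}$ of the boundary $\partial T$. Writing $G(t):=t^{-\alpha}\ell(t)$, a direct computation along the two slanted edges $s=(1\mp t)/2$ shows that, for each fixed height $t$ with $t\gg 1/n$,
\[
\int_{\R} h_n(s,t)^2\,\ud s \asymp \frac{G(t)^2}{n}=\frac{t^{-2\alpha}\ell(t)^2}{n},
\]
the contribution of the horizontal top edge $t=1$ being negligible because $\ell(1-)=0$.

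The next step is to pin down the order of $c_n=\int_{\R^2}h_n^2$. Since $\alpha\in(1/2,1)$, the integral $\int_0^1 t^{-2\alpha}\,\ud t$ diverges at the tip, so the mass of $h_n^2$ concentrates near $\bs{z}_0:=(1/2,0)$. Splitting the $t$-integral at a threshold of order $1/n$ and estimating the genuine tip region $t\lesssim 1/n$ separately (there the triangle width $\sim t$ is comparable to the shift, $g$ is of order $n^{\alpha}$, and the relevant area is of order $n^{-2}$), one finds that both pieces are of order $n^{2\alpha-2}$, whence $c_n\asymp n^{2\alpha-2}$.

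For part (1), to establish $\pi_n\stackrel{w}{\rightarrow}\delta_{\bs{z}_0}$ it suffices to show $\pi_n\big(B(\bs{z}_0,\rho)^c\big)\to 0$ for every $\rho>0$. Inside $T^{1/n}$, points with small $t$ automatically have $s$ close to $1/2$, since $|s-1/2|\le t/2+1/n$ there; hence the complement of the ball meets the support of $h_n$ only where $t$ is bounded away from $0$, and there $g$ is bounded, so the edge estimate gives $\int_{B(\bs{z}_0,\rho)^c}h_n^2=O(1/n)$. Because $\alpha>1/2$ we have $1/n=o(n^{2\alpha-2})=o(c_n)$, whence $\pi_n\big(B(\bs{z}_0,\rho)^c\big)=O(n^{1-2\alpha})\to 0$. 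As $\lambda_2(\{\bs{z}_0\})=0$, Assumption \ref{asm:lln} follows.

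For part (2), items \eqref{c:null} and \eqref{c:inter} are immediate from the dimensions of $E_n=(1/2-\varepsilon_n/2,1/2+\varepsilon_n/2)\times(0,\varepsilon_n/2)$: the point $\bs{z}_0$ lies in $\overline{E_n}$, and any translate of $E_n$ by $(s,t)$ with $|s|\vee|t|\ge\varepsilon_n$ is $\lambda_2$-disjoint from $E_n$ since its side lengths are $\varepsilon_n$ and $\varepsilon_n/2$. The crux is item \eqref{c:decay}. As $1/n=o(\varepsilon_n)$, for $t<\varepsilon_n/2$ the support of $h_n$ satisfies $|s-1/2|\le t/2+1/n<\varepsilon_n/2$ for large $n$ and so lies inside $E_n$; the excess mass therefore comes only from $t>\varepsilon_n/2$, where the edge estimate yields
\[
\int_{\R^2\setminus E_n}h_n^2\asymp \frac{1}{n}\int_{\varepsilon_n/2}^{1}t^{-2\alpha}\,\ud t\asymp \frac{\varepsilon_n^{1-2\alpha}}{n}.
\]
Requiring $\pi_n(\R^2\setminus E_n)=c_n^{-1}\int_{\R^2\setminus E_n}h_n^2=o(\varepsilon_n^2)$ thus amounts to $\varepsilon_n^{1-2\alpha}n^{-1}=o(\varepsilon_n^2 n^{2\alpha-2})$, i.e.\ $n^{\kappa(2\alpha+1)+1-2\alpha}\to 0$ under $\varepsilon_n\asymp n^{-\kappa}$, which holds precisely when $\kappa<(2\alpha-1)/(2\alpha+1)$. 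I expect the main obstacle to be the rigorous treatment of the genuine tip region $t\lesssim 1/n$, where the slanted-edge approximation breaks down and $h_n$ must be bounded directly using the singularity of $g$ together with the $C^1$-regularity of $\ell$, exactly as in the delicate part of Proposition \ref{prop:examplekernel}; controlling this region is what simultaneously confirms $c_n\asymp n^{2\alpha-2}$ and the negligibility of the near-tip mass lying outside $E_n$.
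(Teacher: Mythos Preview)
Your plan is correct and mirrors the paper's sketch: both bound the tip mass from below by $n^{2\alpha-2}$ and the mass along the slanted edges above height $\varepsilon_n/2$ by $O(n^{-1}\varepsilon_n^{1-2\alpha})$ (the paper does this via an explicit partition $B^{(1)}_n,\ldots,B^{(4)}_n$ rather than your fibre-wise estimate $\int_\R h_n(s,t)^2\,\ud s\asymp G(t)^2/n$, but the computations are the same), arriving at the identical exponent condition $\kappa<(2\alpha-1)/(2\alpha+1)$. Your anticipated obstacle in the region $t\lesssim 1/n$ is in fact absent: your own support bound $|s-1/2|\le t/2+1/n$ already places all mass with $t<\varepsilon_n/2$ inside $E_n$, so you only need the \emph{lower} bound $c_n\gtrsim n^{2\alpha-2}$ (not the two-sided $\asymp$), and this is immediate from the observation that $h_n(s,t)=G(t)$ on $\{(s,t)\in T:0<t<1/n\}$, exactly as in the paper's treatment of $\tilde E_n$.
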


\begin{rem}
It is evident from the proof that Proposition \ref{prop:trianglekernel} can be easily extended to a weight function $g$ whose essential support is a ``small perturbation'' of the triangle $T$.
\end{rem}

\subsection{Some comments on the results}\label{sec:discuss}

\subsubsection{Measurement of relative volatility}

A practical difficulty in using Theorem \ref{thm:powerlln} is that the power variations need to be scaled appropriately and the scaling depends on the unknown weight function $g$ and may be difficult to compute precisely. In fact, it is evident that the volatility field $\sigma$ cannot even be determined unambiguously unless $g$ normalized a priori. However, often we are more interested in the \emph{variation} $\sigma$ rather than its precise \emph{level}, which may not, thus, be very informative due to the ambiguity caused by the lack of normalization. It is key to note that the variation of $\sigma$ is captured also by the \emph{relative} integrated volatility field
\begin{equation}\label{eq:relintvol}
\frac{\int_0^s \int_0^t \sigma^2_{(u,v)}\ud u \ud v}{\int_0^1 \int_0^1 \sigma^2_{(u,v)}\ud u \ud v}, \quad (s,t) \in [0,1]^2.
\end{equation}  
Quantities of the form \eqref{eq:relintvol} can be obtained as the limits of certain ratios of (unscaled) power variations, which are statistically feasible. More precisely, Theorem \ref{thm:powerlln} readily implies that when $\pi = \delta_{(0,0)}$, we have for any $p>0$,
\begin{equation*}
\frac{V^{(p)}_\cdot(1,n)}{V^{(p)}_{(1,1)}(1,n)} \xrightarrow[ n\rightarrow \infty]{\prob} \frac{\int_0^\cdot \int_0^\cdot \sigma^p_{(u,v)}\ud u \ud v}{\int_0^1 \int_0^1 \sigma^p_{(u,v)}\ud u \ud v} \quad \textrm{in $D([0,1]^2)$.}
\end{equation*}
The use of relative volatility statistics, in general, is elaborated in the paper \cite{BNG2013}.

\subsubsection{Bias in the central limit theorem}

Note that in Theorem \ref{thm:powerclt}, the scaled power variation $\varepsilon^2_n c^{-p/2} V^{(p)}(k_n,n)$ is centered around its expectation $\varepsilon^2_n c^{-p/2}\E_\wn\big[V^{(p)}_\cdot(k_n,n)\big]$, instead of the limit $m_p \Sigma^{(p,\pi)}$ given by the law of large numbers. While it is shown in the proof of Theorem \ref{thm:powerlln} that, under Assumption \ref{asm:lln},
\begin{equation}\label{eq:bias}
\frac{\varepsilon^2_n}{c^{p/2}_n}\E_\wn\big[V^{(p)}_{(s,t)}(k_n,n)\big] \xrightarrow[ n\rightarrow \infty]{} m_p \Sigma^{(p,\pi)}_{(s,t)} \quad \textrm{for any $(s,t) \in [0,1]^2$,}
\end{equation}
the rate of convergence in \eqref{eq:bias} appears to be in most, if not all, cases too slow that we could replace $\varepsilon^2_n c^{-p/2}\E_\wn\big[V^{(p)}_\cdot(k_n,n)\big]$ with $m_p \Sigma^{(p,\pi)}$ in \eqref{eq:powerlln}.

An asymptotically non-negligible bias is present even in the most well-behaved case with constant $\sigma$. Namely, we have then
\begin{equation*}
\frac{\varepsilon^2_n}{c^{p/2}_n}\E_\wn\big[V^{(p)}_{(s,t)}(k_n,n)\big] - m_p \Sigma^{(p,\pi)}_{(s,t)} = -m_p \varepsilon_n \bigg( \bigg\{\frac{s}{\varepsilon_n} \bigg\}t+\bigg\{\frac{t}{\varepsilon_n} \bigg\}s +o(1)\bigg).
\end{equation*}
One can show that for almost any $(s,t) \in [0,1]^2$,
\begin{equation*}
\limsup_{n \rightarrow \infty} \bigg( \bigg\{\frac{s}{\varepsilon_n} \bigg\}+\bigg\{\frac{t}{\varepsilon_n} \bigg\}\bigg) > 0,
\end{equation*}
and, consequently,
\begin{equation*}
\liminf_{n \rightarrow \infty} \varepsilon^{-1}_n \bigg(\frac{\varepsilon^2_n}{c^{p/2}_n}\E_\wn\big[V^{(p)}_{(s,t)}(k_n,n)\big] - m_p \Sigma^{(p,\pi)}_{(s,t)}\bigg) < 0.
\end{equation*}
This peculiarity limits the usefulness of Theorem \ref{thm:powerclt} in the context of statistical inference (e.g., regarding confidence intervals) on $\Sigma^{(p,\pi)}$.

\subsubsection{Extending the central limit theorem beyond thinned power variations}

Is it possible to extend Theorem \ref{thm:powerclt} to cover \emph{ordinary} power variations? Quite possibly, but we expect that the limit would not remain the same. In fact, we conjecture that the situation is analogous to Brownian semistationary ($\mathcal{BSS}$) processes (see \cite{CHPP2012}). Recall that ordinary power variations of $\mathcal{BSS}$ processes, under certain conditions, satisfy a central limit theorem \cite[Theorem 3.2]{CHPP2012} with a limit analogous to $\Xi^{(p)}$, but multiplied with a constant that is strictly larger than $(m_{2p}-m^2_p)^{1/2}$, whereas the limit in the corresponding result for thinned power variations \cite[Theorem 4.5]{CHPP2012} has the factor $(m_{2p}-m^2_p)^{1/2}$. This is a consequence of the non-generate limiting correlation structure (which identical to the one of \revised{\emph{fractional Gaussian noise}}) of the increments of a $\mathcal{BSS}$ process. 
Thinning decreases the asymptotic variance in the central limit theorem through ``decorrelation'' of the increments, but at the expense of rate of convergence.   

While our Theorem \ref{thm:powerclt} is analogous to Theorem 4.5 of \cite{CHPP2012}, obtaining a central limit theorem for unthinned power variations, akin to Theorem 3.2 of \cite{CHPP2012}, is currently an open problem, which we hope to address in future work, along with allowing for $\sigma$ that depends on the driving noise. 
The key problem is the identification of the limiting correlation structure of the increments.
However, it seems that such a result cannot be accomplished by a straightforward modification of the arguments in \cite{BCP2011} since the one-dimensional regular variation techniques used with $\mathcal{BSS}$ processes appear unapplicable in our setting due to the additional dimension. We also expect that, like in \cite{BCP2011,BCP2012,CHPP2012}, such a result would require stronger assumptions on the dependence structure of the ambit field --- beyond what we formulate using the concentration measures --- and a smoothness condition on $\sigma$. 

\section{Law of large numbers}\label{sec:lln}

In this section, we prove the law of large numbers for power variations, Theorem \ref{thm:powerlln}.
 The proof is based on the conditional Gaussianity of the ambit field $Y$ given $\sigma$ and, in particular, on a covariance bound for nonlinear transformations of jointly Gaussian random variables, which we will review first. Note that $Y$ conditional on $\sigma$ is typically non-stationary and the existing laws of large numbers for Gaussian random fields appear not to be (at least directly) applicable to this setting.

\subsection{Hermite polynomials and a covariance bound}

Recall that the \emph{Hermite polynomials} $H_0, H_1, H_2,\ldots$ on $\R$ are uniquely defined through the generating function
\begin{equation*}
\exp\bigg(tx - \frac{t^2}{2} \bigg) = \sum_{n=1}^\infty t^n H_n(x), \quad x\in \R.
\end{equation*}
They are orthogonal polynomials with respect to the Gaussian measure $\gamma$ on $\R$. More precisely, if $(X_1,X_2)$ is a Gaussian random vector such that $\E[X_1] = \E[X_2] = 0$ and $\E[X_1^2]=\E[X_2^2]=1$, then (cf.\ \cite[Lemma 1.1.1]{Nua1995})
\begin{equation}\label{eq:hermiteort}
n!\E[H_n(X_1)H_m(X_2)] = \begin{cases} \E[X_1X_2]^n, & n=m,\\ 0, & n\neq m.
\end{cases}
\end{equation} 
Thus, $\big\{\sqrt{n!}H_n : n \in \N_0\big\}$ is an orthonormal basis of $L^2(\R,\gamma)$ and, in particular, for any $f \in L^2(\R,\gamma)$ there exists $(\alpha_0, \alpha_1, \ldots)\in \ell^2(\N_0)$ such that
\begin{equation}\label{eq:hermiteexpansion}
f = \sum_{n=0}^\infty \alpha_n \sqrt{n!} H_n \quad \textrm{in $L^2(\R,\gamma)$.}
\end{equation}
The index of the leading non-zero coefficient in  the expansion \eqref{eq:hermiteexpansion}, that is, $\min \{ k \in \N_0 : \alpha_k \neq 0\}$, is known as the \emph{Hermite rank} of the function $f$. 

Using \eqref{eq:hermiteort} and \eqref{eq:hermiteexpansion}, it is straightforward to establish the following bound for covariances of functions of jointly Gaussian random variables that is sometimes attributed to J.\ Bretagnolle (see, e.g., \cite[Lemme 1]{Guy1987}). This simple inequality is, in fact, a special case of a far more general result due to Taqqu \cite[Lemma 4.5]{Taqqu1977}. 

\begin{lem}[Covariance]\label{lem:hermiterank}
Let $(X_1,X_2)$ be as above. If $f \in L^2(\R,\gamma)$ has Hermite rank $r \in \N$, then
\begin{equation*}
|\E[f(X_1)f(X_2)]| \lesssim_{\revised{f,r}} |\E[X_1X_2]|^q \quad \textrm{for any $q \in [0,r]$.}
\end{equation*}
\end{lem}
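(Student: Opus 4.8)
The plan is to expand both $f(X_1)$ and $f(X_2)$ in the Hermite basis and exploit the orthogonality relation \eqref{eq:hermiteort} to collapse the double sum into a single series, after which the Hermite rank controls the leading power of $\rho \eqdefl \E[X_1X_2]$.

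First I would write $f = \sum_{n=r}^\infty \alpha_n \sqrt{n!}\, H_n$ in $L^2(\R,\gamma)$, where the sum starts at $r$ because $f$ has Hermite rank $r$, so that $\alpha_0 = \cdots = \alpha_{r-1} = 0$. Applying the bilinear form $(u,v) \mapsto \E[u(X_1)v(X_2)]$ and invoking \eqref{eq:hermiteort}, which kills all cross-terms with $n \neq m$ and evaluates the diagonal terms as $n!\,\E[H_n(X_1)H_n(X_2)] = \rho^n$, I obtain
\begin{equation*}
\E[f(X_1)f(X_2)] = \sum_{n=r}^\infty \alpha_n^2\, n!\, \E[H_n(X_1)H_n(X_2)] = \sum_{n=r}^\infty \alpha_n^2\, \rho^n.
\end{equation*}
Interchanging expectation and summation here is justified because $\big\{\sqrt{n!}\,H_n\big\}$ is an orthonormal basis and $(\alpha_n) \in \ell^2(\N_0)$, so the partial sums converge to $f$ in $L^2(\R,\gamma)$ and the bilinear form is continuous in each argument (both $X_1$ and $X_2$ are standard Gaussian). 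Note that $|\rho| \leqslant 1$ by Cauchy--Schwarz, which will be the crucial bound.

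Next I would estimate. Since every index in the series satisfies $n \geqslant r \geqslant q$ and $|\rho| \leqslant 1$, I can factor out $|\rho|^q$ via $|\rho|^n = |\rho|^q |\rho|^{n-q} \leqslant |\rho|^q$, giving
\begin{equation*}
\big|\E[f(X_1)f(X_2)]\big| \leqslant \sum_{n=r}^\infty \alpha_n^2\, |\rho|^n \leqslant |\rho|^q \sum_{n=r}^\infty \alpha_n^2 = |\rho|^q \sum_{n=0}^\infty \alpha_n^2.
\end{equation*}
The remaining sum $\sum_{n=0}^\infty \alpha_n^2 = \|f\|^2_{L^2(\R,\gamma)} < \infty$ is a finite constant depending only on $f$, which is exactly the content of the $\lesssim_f$ notation. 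This yields the claimed bound for every $q \in [0,r]$.

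I do not anticipate a serious obstacle here, as the argument is entirely elementary once the Hermite machinery from \eqref{eq:hermiteort}–\eqref{eq:hermiteexpansion} is in place. The only point requiring a little care is the termwise passage from the $L^2(\R,\gamma)$ expansion of $f$ to the series for the covariance; one must confirm that the bilinear form respects the limit, which follows from the isometry property of the Hermite basis and the square-summability of the coefficients rather than from any pointwise convergence of $f$. Everything else is a matter of bounding $|\rho|^n$ by $|\rho|^q$ using $r \geqslant q$ and $|\rho| \leqslant 1$.
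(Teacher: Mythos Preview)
Your proof is correct and follows precisely the approach the paper indicates: the paper does not spell out the details but simply states that the bound ``is straightforward to establish'' using the orthogonality relation \eqref{eq:hermiteort} and the expansion \eqref{eq:hermiteexpansion}, which is exactly the computation you carried out.
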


For any $p>0$, write $u_p(x) \eqdefl |x|^p-m_p$, $x \in \R$. Clearly, $u_p \in L^2(\R,\gamma)$ and Gaussian integration by parts shows that the Hermite rank of $u_p$ is $2$. Thus, Lemma \ref{lem:hermiterank} implies that 
\begin{equation}\label{eq:powercov}
|\Cov[|X_1|^p,|X_2|^p]| \lesssim_p |\E[X_1X_2]|^q \quad \textrm{for any $q \in [0,2]$,}
\end{equation}
which will be instrumental in the proof of Theorem \ref{thm:powerlln}, below.

\subsection{Proof of Theorem \ref{thm:powerlln}}

Prior to proving Theorem \ref{thm:powerlln}, we still need to establish a simple fact that follows from the convergence $\pi_n \stackrel{w}{\rightarrow} \pi$. To this end, recall that the \emph{L\'evy--Prohorov distance} of $\mu$,\ $\nu \in \mathcal{P}(\R^2)$ is defined as
\begin{equation*}
d(\mu,\nu) \eqdefl \inf \big\{ \varepsilon > 0 : \mu(E) \leqslant \nu(E^\varepsilon) + \varepsilon, \, \nu(E) \leqslant \mu(E^\varepsilon) + \varepsilon  \textrm{ for all $E \in \mathcal{B}(\R^2)$}\big\}.
\end{equation*}  
The L\'evy--Prohorov distance is a metric on $\mathcal{P}(\R^2)$ and $\pi_n \stackrel{w}{\rightarrow} \pi$ holds if and only if $d(\pi_n,\pi)\rightarrow 0$ (see, e.g., \cite[p.~72]{Bil1999}). Below, we write $B \eqdefl \supp \, \pi$, for the sake of brevity.

\begin{lem}[Concentration]\label{lem:prohorov}
If $\pi_n \stackrel{w}{\rightarrow} \pi$, then there exist positive numbers $(a_n)$ such that $a_n \downarrow 0$ and $\pi_n(B^{a_n})\rightarrow 1$.
\end{lem}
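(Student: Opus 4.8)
The plan is to extract the conclusion directly from the L\'evy--Prohorov characterization of weak convergence recalled just above the statement. Write $d_n \eqdefl d(\pi_n,\pi)$, so that $d_n \rightarrow 0$ by hypothesis. The crucial observation is that, since $B = \supp\,\pi$ is closed with $\pi(B)=1$, feeding the test set $E = B$ into the defining inequalities of the Prohorov metric converts smallness of $d_n$ into control of $\pi_n$ on a shrinking neighborhood of $B$.

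More precisely, I would first recall that for any $\varepsilon > d_n$ the definition of $d(\pi_n,\pi)$ gives $\pi(E) \leqslant \pi_n(E^\varepsilon) + \varepsilon$ for every $E \in \mathcal{B}(\R^2)$. Taking $E = B$ and using $\pi(B) = 1$, this reads $1 \leqslant \pi_n(B^\varepsilon) + \varepsilon$, that is, $\pi_n(B^\varepsilon) \geqslant 1 - \varepsilon$ for every $\varepsilon > d_n$. Here $B^\varepsilon$ is open, hence Borel, so the application of the inequality is legitimate.

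It then remains to manufacture a single sequence $(a_n)$ that is admissible (so $a_n > d_n$ and the bound applies), monotone ($a_n \downarrow 0$), and vanishing. I would set $b_n \eqdefl \sup_{m \geqslant n} d_m$, which is non-increasing and satisfies $b_n \rightarrow 0$ because $d_n \rightarrow 0$, and then define $a_n \eqdefl b_n + 2^{-n}$. Being the sum of a non-increasing sequence and a strictly decreasing one, $(a_n)$ is strictly decreasing with $a_n \downarrow 0$, while $a_n \geqslant d_n + 2^{-n} > d_n$. Applying the bound from the previous step with $\varepsilon = a_n$ yields $1 \geqslant \pi_n(B^{a_n}) \geqslant 1 - a_n \rightarrow 1$, whence $\pi_n(B^{a_n}) \rightarrow 1$, as required.

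The argument is essentially routine once the right test set $E = B$ is chosen; the only mild subtlety, and the main point to get right, is the monotone repackaging of the possibly non-monotone Prohorov distances $d_n$ into a genuinely decreasing sequence $(a_n)$, which the running-supremum device $b_n = \sup_{m \geqslant n} d_m$ handles cleanly.
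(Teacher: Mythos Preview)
Your proof is correct and follows essentially the same route as the paper's own argument: both use the L\'evy--Prohorov characterization of weak convergence, apply the defining inequality with the test set $E = B$, and conclude $\pi_n(B^{a_n}) \geqslant 1 - a_n \rightarrow 1$ once a sequence $(a_n)$ with $a_n \downarrow 0$ and $a_n > d(\pi_n,\pi)$ is in hand. The only difference is that the paper simply asserts the existence of such a sequence, whereas you give an explicit construction via the running supremum $b_n = \sup_{m \geqslant n} d_m$ and $a_n = b_n + 2^{-n}$; this extra detail is harmless and indeed clarifies a point the paper leaves implicit.
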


\begin{proof}
Let $(a_n)$ be such that $a_n \downarrow 0$ and $a_n > d(\pi_n,\pi)$ for any $n\in\N$. By the definition of the L\'evy--Prohorov distance, $\pi(B) \leqslant \pi_n(B^{a_n}) + a_n$ for any $n \in \N$. 
Since $\pi(B)=1$, we have
$\pi_n(B^{a_n}) \geqslant  1-a_n \rightarrow 1$.
\end{proof}

\begin{proof}[Proof of Theorem \ref{thm:powerlln}]
Clearly, we have $V^{(p)}_{(s,t)}(k_n,n) \leqslant V^{(p)}_{(u,v)}(k_n,n)$ if $s \leqslant u$ and $t \leqslant v$. Thus, by Lemma \ref{lem:unifconv}, it suffices to establish pointwise convergence
\begin{equation}\label{eqn:pointwiseconv}
\varepsilon_n^2c^{-p/2}_{n}V^{(p)}_{(s,t)}(k_n,n) \xrightarrow[ n\rightarrow \infty]{\prob} m_p \int_0^{s} \int_0^{t} \bigg(\int \sigma_{(u-\xi,v-\tau)}^2\pi(\ud \xi,\ud \tau)\bigg)^{p/2} \ud u \ud v
\end{equation}
for any $(s,t)\in [0,1]^2$. More precisely, we show \eqref{eqn:pointwiseconv} \emph{conditional} on the realization of $\sigma$. Under this conditioning, we may regard $\sigma$ as a non-random element of $C([-1,1]^2,\R_+)$ and $Y$ as a Gaussian random field.

Let us first show that 
\begin{equation}\label{eq:meanconv}
\lim_{n\rightarrow \infty}\varepsilon_n^2c^{-p/2}_{n}\E_\wn\big[V^{(p)}_{(s,t)}(k_n,n)\big] = m_p \int_0^{s} \int_0^{t} \bigg(\int \sigma_{(u-\xi,v-\tau)}^2\pi(\ud \xi,\ud \tau)\bigg)^{p/2} \ud u \ud v.
\end{equation}
 Since
\begin{equation*}
\begin{split}
\E_\wn\big[\big|Y\big(R^{(n)}_{(i,j)}\big)\big|^p\big] & = m_p \E_\wn\big[\big|Y\big(R^{(n)}_{(i,j)}\big)\big|^2\big]^{p/2} \\
& = m_p c_n^{p/2} \bigg(\int \sigma^2_{(i/n - \xi, j/n - \tau)} \pi_n(\ud \xi,\ud \tau)\bigg)^{p/2},
\end{split} 
\end{equation*}
we have
\begin{equation*}
\begin{split}
\varepsilon_n^2c^{-p/2}_{n}\E_\wn\big[V^{(p)}_{(s,t)}(k_n,n)\big] & = m_p\varepsilon^2_n  \sum_{i=1}^{\lfloor s/\varepsilon_n \rfloor}\sum_{j=1}^{\lfloor t/\varepsilon_n \rfloor}  \bigg(\int \sigma^2_{(\varepsilon_n i - \xi, \varepsilon_n j - \tau)} \pi_n(\ud \xi,\ud \tau)\bigg)^{p/2} \\
& = m_p \int_0^{\lfloor s  \rfloor_n}\int_0^{\lfloor t \rfloor_n}\bigg(\int \sigma^2_{(\lceil u \rceil_n - \xi,  \lceil v \rceil_n - \tau)} \pi_n(\ud \xi,\ud \tau)\bigg)^{p/2}\ud u \ud v,
\end{split}
\end{equation*}
where $\lceil x \rceil_n \eqdefl \varepsilon_n\lceil x/\varepsilon_n \rceil$ and $\lfloor x \rfloor_n \eqdefl \varepsilon_n\lfloor x/\varepsilon_n \rfloor$ for any $x \in \R$ and $n \in \N$.
Since $\lfloor s \rfloor_n \rightarrow s$ and $\lfloor t  \rfloor_n \rightarrow t$ as $n \rightarrow \infty$, the convergence \eqref{eq:meanconv} follows from Lebesgue's dominated convergence theorem, provided that for any $(u,v) \in [0,1]^2$,
\begin{equation*}
\lim_{n \rightarrow \infty} \int \sigma^2_{(\lceil u \rceil_n \wedge 1 - \xi, \lceil v \rceil_n \wedge 1 - \tau)} \pi_n(\ud \xi,\ud \tau) = \int \sigma_{(u-\xi,v-\tau)}^2\pi(\ud \xi,\ud \tau),
\end{equation*}
which, in turn, is a straightforward consequence of the uniform continuity of the realization of $\sigma$ and the convergence $\pi_n \stackrel{w}{\rightarrow} \pi$.

Now, \eqref{eqn:pointwiseconv} follows from Chebyshev's inequality, provided that 
\begin{equation}\label{eqn:varconv}
\lim_{n\rightarrow \infty}\varepsilon^4_n c^{-p}_n\Var_\wn\big[V^{(p)}_{(s,t)}(k_n,n)\big]=0.
\end{equation}
To show \eqref{eqn:varconv}, we expand
\begin{multline}\label{eq:covexpansion}
\varepsilon^4_n c^{-p}_n\Var_\wn\big[V^{(p)}_{(s,t)}(k_n,n)\big] \\ = \varepsilon^4_n \sum_{i_1,i_2=1}^{\lfloor s/\varepsilon_n\rfloor}\sum_{j_1,j_2=1}^{\lfloor t/\varepsilon_n\rfloor} c^{-p}_{n} \Cov_\wn   \big[\big|Y\big(R^{(n)}_{(k_n i_1,k_n j_1)}\big)\big|^p,\big|Y\big(R^{(n)}_{(k_n i_2,k_n j_2)}\big)\big|^p\big].
\end{multline}
Using the inequality \eqref{eq:powercov} and the relation
\begin{equation}\label{eq:cequiv}
\E_\wn \big[\big|Y\big(R^{(n)}_{(i,j)}\big)\big|^2\big] \asymp_\sigma c_n, \quad i,j = 1,\ldots,n,\quad n  \in \N,
\end{equation}
we obtain
\begin{multline*}
c^{-p}_{n} \big|\Cov_\wn   \big[\big|Y\big(R^{(n)}_{(k_n i_1,k_n j_1)}\big)\big|^p,\big|Y\big(R^{(n)}_{(k_n i_2,k_n j_2)}\big)\big|^p\big]\big| \\
\begin{aligned}
& \lesssim_{\sigma,p} c_n \big|\E_\wn \big[Y\big(R^{(n)}_{(k_n i_1,k_n j_1)}\big)Y\big(R^{(n)}_{(k_n i_2,k_n j_2)}\big)\big]\big| \\
& \lesssim_\sigma \int \dot{\pi}_n(\xi,\tau)^{1/2}\dot{\pi}_n\big(\xi+\varepsilon_n (i_1-i_2),\tau+\varepsilon_n (j_1-j_2)\big)^{1/2}\ud \xi \ud \tau.
\end{aligned}
\end{multline*}
Applying this bound to \eqref{eq:covexpansion}, we arrive at
\begin{equation*}
\varepsilon^4_n c^{-p}_n\Var_\wn\big[V^{(p)}_{(s,t)}(k_n,n)\big]  \lesssim_{\sigma,p}  \int\limits_0^s \int\limits_0^t \int\limits_0^s \int\limits_0^t \Pi_n(u_1,v_1,u_2,v_2)\ud u_1 \ud v_1\ud u_2 \ud v_2,
\end{equation*}
where 
\begin{equation*}
\Pi_n(u_1,v_1,u_2,v_2) \eqdefl\int \dot{\pi}_n(\xi,\tau)^{1/2}\dot{\pi}_n\big(\xi+\lceil u_1\rceil_n-\lceil u_2\rceil_n,\tau+\lceil v_1\rceil_n-\lceil v_2\rceil_n \big)^{1/2}\ud \xi \ud \tau.
\end{equation*}

The Cauchy--Schwarz inequality ensures that the functions $\Pi_1,\Pi_2,\ldots$ are uniformly bounded on $([0,s]\times[0,t])^2$. Thus, by Lebesgue's dominated convergence theorem, it suffices to show that $\Pi_n$ tends to zero almost everywhere as $n\rightarrow \infty$. We will split this task into two parts by treating separately
\begin{equation*}
\Pi^{(1)}_n(u_1,v_1,u_2,v_2) \eqdefl  \int_{\R^2\setminus B^{a_n}} \dot{\pi}_n(\xi,\tau)^{1/2}\dot{\pi}_n\big(\xi+\lceil u_1\rceil_n-\lceil u_2\rceil_n,\tau+\lceil v_1
\rceil_n-\lceil v_2\rceil_n\big)^{1/2}\ud \xi \ud \tau
\end{equation*}
and
\begin{equation*}
\Pi^{(2)}_n(u_1,v_1,u_2,v_2) \eqdefl  \int_{B^{a_n}} \dot{\pi}_n(\xi,\tau)^{1/2}\dot{\pi}_n\big(\xi+\lceil u_1\rceil_n-\lceil u_2\rceil_n,\tau+\lceil v_1
\rceil_n-\lceil v_2\rceil_n\big)^{1/2}\ud \xi \ud \tau,
\end{equation*}
where $(a_n)$ is a sequence of positive real numbers such that $a_n\downarrow0$ and $\pi_n(B^{a_n})\rightarrow 1$, the existence of which is ensured by Lemma \ref{lem:prohorov}.
Applying the Cauchy--Schwarz inequality to $\Pi^{(1)}$, we obtain
\begin{equation*}
\begin{split}
 \Pi^{(1)}_n(u_1,v_1,u_2,v_2)^2 & \leqslant \pi_n(\R^2\setminus B^{a_n}) \int_{\R^2\setminus B^{a_n}}\dot{\pi}_n\big(\xi+\lceil u_1\rceil_n-\lceil u_2\rceil_n,\tau+\lceil v_1
\rceil_n-\lceil v_2\rceil_n\big)\ud \xi \ud \tau \\
& \leqslant 1-\pi_n(B^{a_n}) \xrightarrow[n \rightarrow \infty]{} 0.
\end{split}
\end{equation*}
Similarly, in the case of $\Pi^{(2)}$ we obtain
\begin{equation}\label{eq:pi2ineq}
\Pi^{(2)}_n(u_1,v_1,u_2,v_2)^2 \leqslant \int_{B^{a_n}}\dot{\pi}_n\big(\xi+\lceil u_1\rceil_n-\lceil u_2\rceil_n,\tau+\lceil v_1
\rceil_n-\lceil v_2\rceil_n\big) \ud \xi \ud \tau,
\end{equation}
where, however, a slightly more elaborate argument, inspired by the proof of Lemma 1 in \cite{BNG2011}, is needed to show convergence to zero.

By Urysohn's lemma, for any $\delta>0$ there exists $\varphi_{\delta} \in  C(\R^2,[0,1])$ such that $\varphi_\delta(\bs{z})=1$ for $\bs{z} \in \overline{B^{\delta}}$ and $\varphi_\delta(\bs{z})=0$ for $\bs{z} \in \overline{\R^2 \setminus B^{2\delta}}$. From \eqref{eq:pi2ineq} we deduce, thus,
\begin{equation*}
\begin{split}
\limsup_{n\rightarrow\infty}\Pi^{(2)}_n(u_1,v_1,u_2,v_2)^2 & \leqslant \lim_{n\rightarrow\infty}\int \varphi_{\delta}(\xi,\tau) \dot{\pi}_n\big(\xi+\lceil u_1\rceil_n-\lceil u_2\rceil_n,\tau+\lceil v_1
\rceil_n-\lceil v_2\rceil_n\big) \ud \xi \ud \tau \\
& = \lim_{n\rightarrow\infty}\int \varphi_{\delta}\big(\xi+\lceil u_2\rceil_n-\lceil
u_1\rceil_n,\tau+\lceil v_2
\rceil_n-\lceil v_1\rceil_n\big) \pi_n(\ud \xi, \ud \tau) \\
& = \int  \varphi_{\delta}(\xi+u_2-u_1,\tau+v_2-v_1) \pi(\ud \xi,\ud \tau),
\end{split}
\end{equation*}
where we used the bound $|\lceil x \rceil_n - x|< \varepsilon_n$, for all $x \in \R$ and $n \in \N$, and the observation that $\varphi_\varepsilon$ is, in fact, uniformly continuous. Since $\varphi_\delta$ converges pointwise to $\mathbf{1}_B$ as $\delta \rightarrow 0$, we have
\begin{equation*}
\limsup_{n\rightarrow\infty}\Pi^{(2)}_n(u_1,v_1,u_2,v_2)^2 \leqslant \pi\big(\supp \, \pi + (u_1-u_2,v_1-v_2) \big).
\end{equation*}
The push-forward measure of the mapping $(u_1,v_1,u_2,v_2)\mapsto (u_1-u_2,v_1-v_2)$ on $\R^2$ is absolutely continuous with respect to $\lambda_2$, so our argument is complete if we show that $\pi(\supp \, \pi-\bs{z})=0$ for almost every $\bs{z}\in \R^2$. But this follows from the assumption that $\lambda_2(\supp\,\pi)=0$, since
\begin{equation*}
\int \pi(\supp\,\pi-\bs{z}) \lambda_2(\ud \bs{z})  = \int \lambda_2(\supp\,\pi-\bs{z})\pi(\ud \bs{z}) = \int \lambda_2(\supp\,\pi)\pi(\ud \bs{z}) = 0,
\end{equation*}
where the first equality follows from Lemma 1.28 in \cite{Kal2002} and the second from the translation invariance of the Lebesgue measure.
\end{proof}

\section{Central limit theorem}\label{sec:clt}

The proof of the central limit theorem, Theorem \ref{thm:powerclt}, is based on a \emph{chaos decomposition} of the power variation, that is, representing it as an $L^2$-convergent series of \emph{iterated Wiener integrals} with respect to the white noise $W$. Then, we apply the limit theory for iterated Wiener integrals to establish convergence of finite-dimensional distributions. We will begin by recalling some key facts of the chaos decomposition and the related central limit theorem.

\subsection{Central limit theorem via chaos decompositions}

Let us denote by $\Hil$ the Hilbert space $L^2([-1,1]^2)$, which will have a special role in what follows. Moreover, let $\Hil^{\otimes k} \cong L^2([-1,1]^{2k})$ be the $k$-fold tensor product of  $\Hil$, for $k \in \N$, and denote by $\Hil^{\odot k}$ the set of \emph{symmetric} functions belonging to $\Hil^{\otimes k}$, that is, for any $f \in \Hil^{\odot k}$, permutation $s : \{1,\ldots,k \}\longrightarrow \{1,\ldots,k \}$, and almost any $(\bs{z}_1,\ldots,\bs{z}_k) \in [0,1]^{2k}$,
\begin{equation*}
f(\bs{z}_1,\ldots,\bs{z}_1) = f\big(\bs{z}_{s(1)},\ldots,\bs{z}_{s(k)}\big).
\end{equation*}
For any $f \in \Hil^{\odot k}$, the $k$-fold \emph{iterated Wiener integral} of the kernel $f$ with respect to the white noise $\wn$, denoted by $I_k(f)$, can be defined as a linear map $\Hil^{\odot k} \longrightarrow L^2(\Omega_\wn)$ with the key property
\begin{equation*}
\E_\wn\big[I_k(f)^2\big] = k!\| f\|^2_{\Hil^{\otimes k}}.
\end{equation*}
(For the details of the construction, see \cite[pp.~7--10]{Nua1995}.) The remarkable feature of these integrals is that any $X\in L^2(\Omega_\wn)$ admits a unique \emph{chaos decomposition} \cite[Theorem 1.1.2]{Nua1995},
\begin{equation}\label{eq:chaos}
X = \sum_{k=0}^\infty I_k(f_k) \quad \textrm{in $L^2(\Omega_\wn)$,}
\end{equation}
where $f_k \in \Hil^{\odot k}$ for any $k \in \N_0$, with the convention that $f_0 \eqdefl \E_\wn[X]$ and $I_0$ is the identity map on $\R$.

If we are given a sequence of random variables in $L^2(\Omega_\wn)$ and we would like to show that they converge in law to a Gaussian distribution, the chaos decomposition \eqref{eq:chaos} turns out to be instrumental. Specifically, such convergence can be established by verifying some straightforward criteria on the associated kernel functions.
To formulate the criteria, recall that for any $r \in \{1,\ldots,k-1\}$, the $r$-th \emph{contraction} of $f^{(1)} = f^{(1)}_1\otimes \cdots \otimes f^{(1)}_k \in \Hil^{\otimes k}$ and $f^{(2)} = f^{(2)}_1\otimes \cdots \otimes f^{(2)}_k\in \Hil^{\otimes k}$ is the function 
\begin{equation*}
f^{(1)} \otimes_r f^{(2)} \eqdefl \prod_{i=1}^r \big\langle f^{(1)}_{k-r+i}, f^{(2)}_i\big\rangle_\Hil  f^{(1)}_1 \otimes \cdots f^{(1)}_{k-r} \otimes f^{(2)}_{r+1} \otimes \cdots \otimes f^{(2)}_k \in \Hil^{\otimes 2(k-r)}.
\end{equation*}
The following multivariate central limit theorem is a slight reformulation of Theorem 1 in \cite{BCPW2009}, originally a corollary of the results of Nualart and Peccati \cite{NP2005}, and Peccati and Tudor \cite{PT2005}.

\begin{lem}[CLT via chaos decompositions]\label{lem:chaosclt} Let $d \in \N$ and for any $n \in \N$, let $X^{(n)}_1,\ldots,X^{(n)}_d \in L^2(\Omega_\wn)$ be such that for any $i = 1,\ldots,d$,
\begin{equation*}
X^{(n)}_i = \sum_{k=1}^{\infty} I_k\big(f^{(n)}_{k,i}\big) \quad \textrm{in $L^2(\Omega_\wn)$,}
\end{equation*}
where $f^{(n)}_{k,i} \in \Hil^{\odot k}$. Suppose that
\begin{enumerate}[label=(\arabic*),ref=\arabic*]
\item for any $i = 1,\ldots,d$,
\begin{equation*}
\lim_{m \rightarrow \infty} \limsup_{n\rightarrow\infty}\sum_{k=m}^{\infty} k!\big\|f^{(n)}_{k,i}\big\|^2_{\Hil^{\otimes k}} = 0,
\end{equation*}
\item there exist positive semidefinite $d \times d$-matrices $\bs{\aleph},\bs{\aleph}^{(1)},\bs{\aleph}^{(2)},\ldots$ such that for any $i$,~$j = 1,\ldots,d$ and $k \in \N$,
\begin{equation*}
\lim_{n \rightarrow \infty} k!\big\langle f^{(n)}_{k,i},f^{(n)}_{k,j} \big\rangle_{\Hil^{\otimes k}} = \bs{\aleph}^{(k)}_{i,j},
\end{equation*}
and that $\sum_{k=1}^\infty \bs{\aleph}^{(k)} = \bs{\aleph}$,
\item  for any $i = 1,\ldots,d$, $k\in \N$, and $r = 1,\ldots,k-1$,
\begin{equation*}
\lim_{n \rightarrow \infty} \big\| f^{(n)}_{k,i} \otimes_r f^{(n)}_{k,i} \big\|^2_{\Hil^{\otimes 2(k-r)}} = 0.
\end{equation*}
\end{enumerate}
Then, $\big(X^{(n)}_1,\ldots,X^{(n)}_d\big) \stackrel{L}{\rightarrow} N_d(\bs{0},\, \bs{\aleph})$ as $n \rightarrow \infty$.
\end{lem}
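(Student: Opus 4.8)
The plan is to reduce the infinite chaos decompositions to finite ones, invoke a multivariate fourth-moment theorem at each fixed truncation level, and then let the truncation level grow, gluing the two limiting procedures together with the uniform tail bound of hypothesis (1). First I would fix a truncation level $m \in \N$ and set $X^{(n)}_{i,m} \eqdefl \sum_{k=1}^{m-1} I_k\big(f^{(n)}_{k,i}\big)$ for $i = 1,\ldots,d$. Using the orthogonality of iterated integrals of distinct orders together with the isometry $\E_\wn\big[I_k(f)^2\big] = k!\|f\|^2_{\Hil^{\otimes k}}$, hypothesis (1) yields
\[
\limsup_{n\rightarrow\infty}\E_\wn\big[\big(X^{(n)}_i - X^{(n)}_{i,m}\big)^2\big] = \limsup_{n\rightarrow\infty}\sum_{k=m}^{\infty} k!\big\|f^{(n)}_{k,i}\big\|^2_{\Hil^{\otimes k}} \xrightarrow[m\rightarrow\infty]{} 0,
\]
so the truncated vector $X^{(n)}_m \eqdefl \big(X^{(n)}_{1,m},\ldots,X^{(n)}_{d,m}\big)$ approximates $X^{(n)} \eqdefl \big(X^{(n)}_1,\ldots,X^{(n)}_d\big)$ in $L^2(\Omega_\wn)$, uniformly in $n$ as $m\rightarrow\infty$.

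Next, with $m$ fixed, I would treat the finite family $\big\{I_k\big(f^{(n)}_{k,i}\big) : 1\leqslant k \leqslant m-1,\ 1\leqslant i \leqslant d\big\}$ as a single vector in $\R^{d(m-1)}$ whose components have fixed chaos orders. Hypothesis (3), via the Nualart--Peccati criterion \cite{NP2005} (the contraction conditions for $r = 1,\ldots,k-1$ being exactly the criterion for a fixed-order integral to converge to a Gaussian, and being vacuous for $k=1$), forces each component $I_k\big(f^{(n)}_{k,i}\big)$ to converge in law to a centered Gaussian. Hypothesis (2), combined with the fact that integrals of distinct orders are orthogonal (so all limiting cross-order covariances vanish), guarantees that the full $d(m-1)\times d(m-1)$ covariance matrix of this vector converges. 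By the multivariate fourth-moment theorem of Peccati and Tudor \cite{PT2005}, componentwise Gaussian convergence together with convergence of the covariance matrix upgrades to joint convergence to a centered Gaussian. Applying the fixed linear map that sends this vector to $X^{(n)}_m$ then gives
\[
X^{(n)}_m \xrightarrow[n\rightarrow\infty]{L} N_d\Big(\bs{0},\, \textstyle\sum_{k=1}^{m-1}\bs{\aleph}^{(k)}\Big).
\]

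Finally, since $\sum_{k=1}^{m-1}\bs{\aleph}^{(k)} \rightarrow \bs{\aleph}$ as $m\rightarrow\infty$ by hypothesis (2), the limiting Gaussians satisfy $N_d\big(\bs{0},\sum_{k=1}^{m-1}\bs{\aleph}^{(k)}\big) \stackrel{w}{\rightarrow} N_d(\bs{0},\bs{\aleph})$. Combining this with the uniform $L^2$-approximation from the first step through a standard convergence-together argument (e.g., Theorem 3.2 in \cite{Bil1999}, whose hypothesis follows from the $L^2$ bound via Chebyshev's inequality) delivers $X^{(n)} \stackrel{L}{\rightarrow} N_d(\bs{0},\bs{\aleph})$. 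I expect the main obstacle to be the rigorous interchange of the limits $n\rightarrow\infty$ and $m\rightarrow\infty$: the fixed-truncation CLT and the convergence of the truncated covariance matrices must be reconciled, and this is precisely where hypothesis (1) is indispensable, as it controls the truncation error simultaneously for all $n$. A secondary subtlety is the correct application of the Peccati--Tudor theorem across several distinct chaos orders, which hinges on the self-upgrading of componentwise convergence to joint convergence once the covariances are pinned down.
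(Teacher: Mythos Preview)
Your proposal is correct and follows the standard route for deriving this statement from the Nualart--Peccati \cite{NP2005} and Peccati--Tudor \cite{PT2005} theorems. Note, however, that the paper does not actually prove this lemma: it is stated as ``a slight reformulation of Theorem 1 in \cite{BCPW2009}, originally a corollary of the results of Nualart and Peccati \cite{NP2005}, and Peccati and Tudor \cite{PT2005},'' and no proof is given. Your truncate--apply-Peccati--Tudor--then-glue argument is precisely how Theorem 1 of \cite{BCPW2009} is established, so there is nothing to compare; you have simply supplied the details that the paper outsources to the literature.
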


\subsection{Proof of Theorem \ref{thm:powerclt}}

Throughout this section, apart from formula \eqref{eq:testfunction} below, we work conditional on the realization of $\sigma$, regarding it as deterministic --- similarly to the earlier proof of Theorem \ref{thm:powerlln}.
 
We introduce some convenient notation. We define for any $n \in \N$, and $i$,\ $j = 1,\ldots,\lfloor \varepsilon^{-1}_n \rfloor$, function $f_{n,(i,j)} \in \Hil$ by
\begin{equation*}
f_{n,(i,j)}(s,t) \eqdefl h_n(\varepsilon_n i-s, \varepsilon_n j-t)\sigma_{(s,t)},
\end{equation*}
and its normalized counterpart
$
\bar{f}_{n,(i,j)} \eqdefl \|f_{n,(i,j)}\|_\Hil^{-1} f_{n,(i,j)} \in \Hil
$.
By definition,
$
Y\big(R^{(n)}_{(k_n i,k_n j)} \big) = I_1(f_{n,(i,j)})
$.
Thus, we have
\begin{equation*}
\E_\wn\big[Y\big(R^{(n)}_{(k_n i,k_n j)}\big)^2\big] = \| f_{n,(i,j)}\|_\Hil^2
\end{equation*}
and
\begin{equation}\label{eq:incrcorr}
\Corr_W\big[Y\big(R^{(n)}_{(k_n i_1,k_n j_1)} \big),Y\big(R^{(n)}_{(k_n i_2,k_n j_2)} \big)\big] = \big\langle \bar{f}_{n,(i_1,j_1)}, \bar{f}_{n,(i_2,j_2)} \big\rangle_\Hil,
\end{equation}
whence
\begin{equation}\label{eq:lessthanone}
\big| \big\langle \bar{f}_{n,(i_1,j_1)}, \bar{f}_{n,(i_2,j_2)} \big\rangle_\Hil\big| \leqslant 1.
\end{equation}
We also write
\begin{equation*}
Z^{(n)}_{(s,t)} \eqdefl \varepsilon_n c^{-p/2} \big(V^{(p)}_{(s,t)}(k_n,n) - \E_\wn\big[V^{(p)}_{(s.t)}(k_n,n)\big] \big), \quad (s,t) \in [0,1]^2.
\end{equation*}

As a preparation for the proof of Theorem \ref{thm:powerclt}, we prove some key lemmas.
First, we obtain a uniform estimate for the decay of correlations \eqref{eq:incrcorr} under Assumption \ref{asm:clt}.

\begin{lem}[Correlation estimate]\label{lem:corest} If Assumption \ref{asm:clt} holds, then
\begin{equation*}
\overline{\rho}_n \eqdefl \sup_{(i_1,j_1)\neq(i_2,j_2)} \big|\big\langle \bar{f}_{n,(i_1,j_1)}, \bar{f}_{n,(i_2,j_2)} \big\rangle_\Hil\big| = o(\varepsilon_n).
\end{equation*}
\end{lem}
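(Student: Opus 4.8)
The plan is to bound the correlation $\langle \bar{f}_{n,(i_1,j_1)}, \bar{f}_{n,(i_2,j_2)} \rangle_\Hil$ uniformly over distinct index pairs by exploiting the structure of Assumption \ref{asm:clt}. Writing out the inner product and using that $f_{n,(i,j)}(s,t) = h_n(\varepsilon_n i - s, \varepsilon_n j - t)\sigma_{(s,t)}$, the numerator of the correlation is an integral of a product $h_n(\varepsilon_n i_1 - s, \varepsilon_n j_1 - t)\,h_n(\varepsilon_n i_2 - s, \varepsilon_n j_2 - t)\,\sigma_{(s,t)}^2$. After a change of variables this becomes, up to the bounded factor $\sigma^2$ and the normalization $c_n$, an integral of $\dot{\pi}_n(\xi,\tau)^{1/2}\dot{\pi}_n(\xi + \varepsilon_n(i_1-i_2), \tau + \varepsilon_n(j_1-j_2))^{1/2}$ against $\ud\xi\,\ud\tau$, exactly as in the variance computation in the proof of Theorem \ref{thm:powerlln}. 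The denominator, namely $\|f_{n,(i_1,j_1)}\|_\Hil \|f_{n,(i_2,j_2)}\|_\Hil$, is bounded below by a constant times $c_n$ using \eqref{eq:cequiv} and the strict positivity of $\sigma$, so I would reduce the whole problem to estimating the overlap integral $\int \dot{\pi}_n(\xi,\tau)^{1/2}\dot{\pi}_n(\xi+\varepsilon_n \Delta i, \tau + \varepsilon_n \Delta j)^{1/2}\,\ud\xi\,\ud\tau$ for $(\Delta i, \Delta j)\neq(0,0)$.

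First I would isolate the role of the sets $E_n$. Since $\dot{\pi}_n = h_n^2/c_n$ is the density of $\pi_n$, the overlap integral is the $L^2$-inner product of the two translates of $\dot{\pi}_n^{1/2}$. By Cauchy--Schwarz applied on the complement $\R^2 \setminus E_n$, the contribution of the region where either translate leaves $E_n$ is controlled by $\pi_n(\R^2\setminus E_n)^{1/2}$ times a bounded factor, which by item \eqref{c:decay} is $o(\varepsilon_n)$. It then remains to handle the contribution of the region where both $(\xi,\tau) \in E_n$ and its shift $(\xi+\varepsilon_n\Delta i, \tau+\varepsilon_n\Delta j) \in E_n$; but this is exactly an integral over $E_n \cap (E_n - (\varepsilon_n\Delta i, \varepsilon_n \Delta j))$. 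Since $(\Delta i, \Delta j)$ is a nonzero integer pair, the displacement $(\varepsilon_n \Delta i, \varepsilon_n\Delta j)$ has magnitude at least $\varepsilon_n$ in one coordinate, so item \eqref{c:inter} forces this intersection to have $\lambda_2$-measure zero, whence its contribution vanishes identically. Combining the two pieces gives the overlap integral $= o(\varepsilon_n)$ uniformly in $(\Delta i,\Delta j)$, and dividing by the lower bound on the denominator yields $\overline{\rho}_n = o(\varepsilon_n)$.

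The care needed is in making the split uniform over all distinct index pairs simultaneously: the $o(\varepsilon_n)$ bound must not depend on $(\Delta i, \Delta j)$. This works because the ``far'' part is bounded by $\pi_n(\R^2\setminus E_n)^{1/2}$ independently of the shift, and the ``near'' part is exactly zero for every admissible shift by the disjointness property \eqref{c:inter}. I expect the main obstacle to be the bookkeeping in the Cauchy--Schwarz splitting, specifically verifying that one can cleanly separate the overlap integral into a part supported where $E_n$ meets its own translate and a part controlled by the small mass outside $E_n$, while keeping both estimates uniform in the index pair and correctly tracking the normalization by $c_n$. A minor technical point is checking that the lower bound on $\|f_{n,(i,j)}\|_\Hil^2$ coming from \eqref{eq:cequiv} holds uniformly in $(i,j)$ and $n$, which it does thanks to the continuity and strict positivity of the realization of $\sigma$ on the compact set $[-1,1]^2$.
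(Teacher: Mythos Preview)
Your proposal is correct and follows essentially the same route as the paper: reduce the normalized inner product to the overlap integral $\int \dot{\pi}_n(\xi,\tau)^{1/2}\dot{\pi}_n(\xi+\varepsilon_n\Delta i,\tau+\varepsilon_n\Delta j)^{1/2}\,\ud\xi\,\ud\tau$, use item~\eqref{c:inter} to kill the part where both points lie in $E_n$, and bound the remaining two pieces (one for each translate leaving $E_n$) via Cauchy--Schwarz by $\pi_n(\R^2\setminus E_n)^{1/2}=o(\varepsilon_n)$. The paper carries out exactly this argument, only organizing it by squaring first and applying the splitting inequality~\eqref{eq:integralsplit} together with $(s+t)^2\leqslant 2(s^2+t^2)$ to reach $4\pi_n(\R^2\setminus E_n)=o(\varepsilon_n^2)$.
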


\begin{proof}
For any $(i_1,j_1)\neq(i_2,j_2)$, we have the bound
\begin{multline}\label{eq:sqcorr}
\big\langle \bar{f}_{n,(i_1,j_1)}, \bar{f}_{n,(i_2,j_2)} \big\rangle_\Hil^2
\\ \lesssim_\sigma \bigg(\int \dot{\pi}_n(\xi,\tau)^{1/2}\dot{\pi}_n\big(\xi+\varepsilon_n(i_1-i_2),\tau+\varepsilon_n(j_1-j_2)\big)^{1/2}\ud \xi \ud \tau\bigg)^2.
\end{multline}
Since $E_n \cap (E_n + (\varepsilon_n i,\varepsilon_n j))$ is a $\lambda_2$-null set for any $(i,j) \in \Z^2\setminus \{ \bs{0} \}$, we have
\begin{equation}\label{eq:integralsplit}
\int\limits_{\R^2} f(\bs{z}) \lambda_2(\ud \bs{z}) \leqslant \int\limits_{\R^2 \setminus E_n}f(\bs{z}) \lambda_2(\ud \bs{z})+ \int\limits_{\R^2 \setminus (E_n+ (\varepsilon_n(i_2-i_1),\varepsilon_n (j_2-j_1)))}f(\bs{z}) \lambda_2(\ud \bs{z}).
\end{equation}
for any $f \in L^1(\R^2)$.
By applying \eqref{eq:integralsplit}, using the inequality $(s+t)^2 \leqslant 2(s^2 + t^2)$,~$(s,t)\in \R^2$, the Cauchy--Schwarz inequality, and making the obvious change of variables, we find that the right-hand side of \eqref{eq:sqcorr} is bounded by \begin{equation*}4\pi_n \big(\R^2 \setminus E_n\big) = o(\varepsilon^2_n),
\end{equation*}
which is independent of $(i_1,j_1)$ and $(i_2,j_2)$.
\end{proof}

\revised{
\begin{rem}
As is evident from the proof of Lemma \ref{lem:corest}, the concentration measure $\pi_n$ provides a method to bound the correlations between the increments $Y\big(R^{(n)}_{(k_n i,k_n j)} \big)$, $i$,\ $j = 1,\ldots,\lfloor \varepsilon^{-1}_n \rfloor$. In general, these correlations seem to be difficult to evaluate or estimate precisely, unless the weight function $g$ factorizes as $g(s,t) = g_1(s)g_2(t)$ with some $g_1$,\, $g_2 \in L^2(\R)$, whereas the asymptotic behavior of the concentration measure $\pi_n$ as $n \rightarrow \infty$ is considerably more tractable even without such factorization of $g$, as we shall see in Section \ref{sec:weight}. However, the present concentration measure approach has the limitation that the uniform bound of Lemma \ref{lem:corest} might not be sharp, especially with increments over rectangles that are far apart.
\end{rem}
}

Next, we derive a chaos decomposition for the random field $Z^{(n)}$.
\begin{lem}[Chaos decomposition]
For any $n \in \N$ and $(s,t) \in [0,1]^2$,
\begin{equation}\label{eq:powerchaos}
Z^{(n)}_{(s,t)} = \sum_{k=2}^\infty I_k \big(F^{(n,k)}_{(s,t)}\big)\quad \textrm{in $L^2(\Omega_\wn)$,}
\end{equation}
where
\begin{equation*}
F^{(n,k)}_{(s,t)} \eqdefl \frac{\alpha_k}{k!} \varepsilon_n\sum_{i=1}^{\lfloor s/\varepsilon_n \rfloor}\sum_{j=1}^{\lfloor t/\varepsilon_n \rfloor} \bigg(\frac{\|f_{n,(i,j)}\|^2_\Hil}{c_n}\bigg)^{p/2}  \bar{f}^{\otimes k}_{n,(i,j)} \in \Hil^{\otimes k}.
\end{equation*}
\end{lem}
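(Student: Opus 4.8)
The plan is to reduce the claim to the Hermite expansion of a single increment and then reassemble everything using the linearity and the isometry of the iterated integrals. First I would write each increment in terms of a standard Gaussian variable: since $Y\big(R^{(n)}_{(k_n i,k_n j)}\big) = I_1(f_{n,(i,j)}) = \|f_{n,(i,j)}\|_\Hil\, I_1(\bar f_{n,(i,j)})$ and $\|\bar f_{n,(i,j)}\|_\Hil = 1$, the variable $G_{n,(i,j)} \eqdefl I_1(\bar f_{n,(i,j)})$ is $N(0,1)$ under $\prob_\wn$. Consequently $\E_\wn\big[\,|G_{n,(i,j)}|^p\,\big] = m_p$, and therefore the centered $p$-th power of a single increment factors as
\begin{equation*}
\big|Y\big(R^{(n)}_{(k_n i,k_n j)}\big)\big|^p - \E_\wn\big[\big|Y\big(R^{(n)}_{(k_n i,k_n j)}\big)\big|^p\big] = \|f_{n,(i,j)}\|_\Hil^p\, u_p(G_{n,(i,j)}),
\end{equation*}
where $u_p(x) = |x|^p - m_p$ as before.

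Next I would invoke the Hermite expansion $u_p = \sum_{k=2}^\infty \alpha_k H_k$ in $L^2(\R,\gamma)$, the summation starting at $k=2$ because $u_p$ was already shown to have Hermite rank $2$ (so $\alpha_0 = \alpha_1 = 0$); here $\alpha_k$ are exactly the coefficients appearing in the statement. Combining this with the standard identity $I_k(h^{\otimes k}) = k!\,H_k\big(I_1(h)\big)$, valid for any $h \in \Hil$ with $\|h\|_\Hil = 1$ (which is consistent with the present normalization of the $H_k$ and the isometry $\E_\wn[I_k(h^{\otimes k})^2] = k!\|h\|_\Hil^{2k}$, cf.\ \cite{Nua1995}), yields the single-increment chaos decomposition
\begin{equation*}
u_p(G_{n,(i,j)}) = \sum_{k=2}^\infty \frac{\alpha_k}{k!}\, I_k\big(\bar f_{n,(i,j)}^{\otimes k}\big) \quad \textrm{in $L^2(\Omega_\wn)$,}
\end{equation*}
the convergence being guaranteed by $u_p \in L^2(\R,\gamma)$ together with the same isometry.

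It then remains to sum over $i$,~$j$ and multiply by $\varepsilon_n c_n^{-p/2}$, using the identity $c_n^{-p/2}\|f_{n,(i,j)}\|_\Hil^p = \big(\|f_{n,(i,j)}\|_\Hil^2/c_n\big)^{p/2}$, and to interchange the \emph{finite} double sum over $(i,j)$ with the series over $k$. Because the sum over $(i,j)$ is finite and the operator sending an element of $L^2(\Omega_\wn)$ to its $k$-th chaos component is a bounded orthogonal projection, this interchange is legitimate and regroups the terms precisely into $\sum_{k=2}^\infty I_k\big(F^{(n,k)}_{(s,t)}\big)$ with $F^{(n,k)}_{(s,t)}$ as stated, by the linearity of $I_k$. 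The overall $L^2(\Omega_\wn)$-convergence of the resulting series is inherited from the fact that $Z^{(n)}_{(s,t)} \in L^2(\Omega_\wn)$ — each increment is conditionally Gaussian and hence has finite moments of all orders — so that its chaos decomposition converges by orthogonality and is unique. The only step requiring genuine care is this final interchange together with its $L^2$-justification; everything else is bookkeeping once the $H_k$-to-$I_k$ identity and the rank-$2$ property of $u_p$ are in place.
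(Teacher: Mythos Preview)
Your proposal is correct and follows essentially the same route as the paper: write each centered increment as $\|f_{n,(i,j)}\|_\Hil^p\,u_p\big(I_1(\bar f_{n,(i,j)})\big)$, apply the Hermite expansion of $u_p$ (starting at $k=2$), convert $H_k\big(I_1(\bar f_{n,(i,j)})\big)$ to $\tfrac{1}{k!}I_k\big(\bar f_{n,(i,j)}^{\otimes k}\big)$, and rearrange. If anything, you are slightly more careful than the paper in spelling out why the interchange of the finite $(i,j)$-sum with the chaos series is legitimate.
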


\begin{proof} Since \revised{$\|f_{n,(i,j)}\|_\Hil^{-1}Y\big(R^{(n)}_{(k_n i,k_n j)} \big) \sim N(0,1)$ } given $\sigma$, and since the Hermite rank of the function $u_p$ is $2$, we have the expansion
\begin{equation}\label{eq:hermitechaos}
\begin{split}
Z^{(n)}_{(s,t)} & = \varepsilon_n c^{-p/2}_n \sum_{i=1}^{\lfloor s/\varepsilon_n \rfloor}\sum_{j=1}^{\lfloor t/\varepsilon_n \rfloor}\big(\big|Y\big(R^{(n)}_{(k_n i,k_n j)} \big)\big|^p - \E_\wn\big[\big|Y\big(R^{(n)}_{(k_n i,k_n j)} \big)\big|^p\big] \big) \\
& = \varepsilon_n c^{-p/2}_n \sum_{i=1}^{\lfloor s/\varepsilon_n \rfloor}\sum_{j=1}^{\lfloor t/\varepsilon_n \rfloor} \|f_{n,(i,j)}\|_\Hil^{p} u_p \Big(\|f_{n,(i,j)}\|_\Hil^{-1}Y\big(R^{(n)}_{(k_n i,k_n j)} \big) \Big) \\
& = \varepsilon_n c^{-p/2}_n \sum_{i=1}^{\lfloor s/\varepsilon_n \rfloor}\sum_{j=1}^{\lfloor t/\varepsilon_n \rfloor} \|f_{n,(i,j)}\|_\Hil^{p} \sum_{k=2}^\infty \alpha_k H_k\Big(\|f_{n,(i,j)}\|_\Hil^{-1}Y\big(R^{(n)}_{(k_n i,k_n j)} \big) \Big).
\end{split}
\end{equation}
As
$\|f_{n,(i,j)}\|_\Hil^{-1}Y\big(R^{(n)}_{(k_n i,k_n j)} \big) = I_1\big(\bar{f}_{n,(i,j)}\big)$ \revised{and as $\|\bar{f}_{n,(i,j)}\|_\Hil=1$}, the Hermite representation of iterated Wiener integrals \cite[Theorem 13.25]{Kal2002} yields
\begin{equation}\label{eq:hermiteiter}
H_k\Big(\|f_{n,(i,j)}\|_\Hil^{-1}Y\big(R^{(n)}_{(k_n i,k_n j)} \big)\Big) = \frac{1}{k!}I_k\big( \bar{f}^{\otimes k}_{n,(i,j)}\big).
\end{equation}
By plugging \eqref{eq:hermiteiter} into \eqref{eq:hermitechaos} and rearranging, we arrive at the asserted chaos decomposition.
\end{proof}

\begin{rem}\label{rem:hermite}
Since $\alpha_2,\alpha_3,\ldots$ are the non-zero coefficients in the Hermite expansion of $u_p$, we have
\begin{equation}\label{eq:hermitecoef}
\sum_{k=2}^{\infty} \frac{\alpha^2_k}{k!} = \int u_p(x)^2 \gamma(\ud x) = m_{2p}-m^2_p <\infty.
\end{equation}
\end{rem}

We will use Lemma \ref{lem:chaosclt} to prove the convergence of the finite-dimensional distributions of $Z^{(n)}$, using the chaos decomposition \eqref{eq:powerchaos}. To this end, we study the asymptotic behavior of the kernels in \eqref{eq:powerchaos}.

\begin{lem}[Asymptotics of kernels]\label{lem:kerasy}
If Assumption \ref{asm:clt} holds, then for any $(s_1,t_1)$,\ $(s_2,t_2) \in [0,1]^2$, $k \geqslant 2$, and $r = 1,\ldots,k-1$,
\begin{align}
\label{eq:chnorm} &\lim_{m \rightarrow \infty} \limsup_{n\rightarrow\infty}\sum_{k=m}^{\infty} k!\big\|F^{(n,k)}_{(s_1,t_1)}\big\|^2_{\Hil^{\otimes k}} = 0,\\
\label{eq:chinner} &\lim_{n \rightarrow \infty} k!\big\langle F^{(n,k)}_{(s_1,t_1)}, F^{(n,k)}_{(s_2,t_2)} \big\rangle_{\Hil^{\otimes k}} = \frac{\alpha^2_k}{k!}\int\limits_{-s_0}^{s_1\wedge s_2 - s_0}\int\limits_{-t_0}^{t_1\wedge t_2 - t_0} \sigma_{(u,v)}^{2p}\ud u \ud v,\\
\label{eq:chcontr} &\lim_{n \rightarrow \infty} \big\| F^{(n,k)}_{(s_1,t_1)} \otimes_r F^{(n,k)}_{(s_1,t_1)}\big\|^2_{\Hil^{\otimes 2(k-r)}} = 0.
\end{align}
\end{lem}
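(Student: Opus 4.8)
The plan is to verify the three assertions \eqref{eq:chnorm}, \eqref{eq:chinner}, \eqref{eq:chcontr} by reducing everything to inner products of the normalized kernels $\bar{f}_{n,(i,j)}$, and then exploiting the correlation estimate of Lemma \ref{lem:corest} together with Assumption \ref{asm:clt}. The key computational fact is that for two tensor powers,
\begin{equation*}
\big\langle \bar{f}^{\otimes k}_{n,(i_1,j_1)}, \bar{f}^{\otimes k}_{n,(i_2,j_2)} \big\rangle_{\Hil^{\otimes k}} = \big\langle \bar{f}_{n,(i_1,j_1)}, \bar{f}_{n,(i_2,j_2)} \big\rangle_\Hil^k,
\end{equation*}
so the entire analysis hinges on the scalar correlations \eqref{eq:incrcorr}, which are bounded by $1$ in modulus by \eqref{eq:lessthanone} and are $o(\varepsilon_n)$ uniformly off the diagonal by Lemma \ref{lem:corest}. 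I would also record the elementary consequence of \eqref{eq:cequiv} that $\|f_{n,(i,j)}\|^2_\Hil/c_n \asymp_\sigma 1$, so the weight factors $(\|f_{n,(i,j)}\|^2_\Hil/c_n)^{p/2}$ are uniformly bounded.

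\textbf{Step 1 (the inner product \eqref{eq:chinner}).} Expanding $\langle F^{(n,k)}_{(s_1,t_1)}, F^{(n,k)}_{(s_2,t_2)}\rangle_{\Hil^{\otimes k}}$ gives $\frac{\alpha_k^2}{(k!)^2}\varepsilon_n^2$ times a double sum over $(i_1,j_1)$ and $(i_2,j_2)$ of the weight product times $\langle \bar{f}_{n,(i_1,j_1)}, \bar{f}_{n,(i_2,j_2)}\rangle_\Hil^k$. For $k\geqslant 2$ the off-diagonal terms are controlled by $\overline{\rho}_n^{\,k-2}\,\langle\cdot,\cdot\rangle^2_\Hil = o(\varepsilon_n^{k-2})\cdot O(\varepsilon_n^2)$ summed against $O(\varepsilon_n^{-4})$ pairs, so they vanish; only the diagonal $(i_1,j_1)=(i_2,j_2)$ survives, contributing $\frac{\alpha_k^2}{(k!)^2}\varepsilon_n^2\sum_{i,j}(\|f_{n,(i,j)}\|^2_\Hil/c_n)^p$ over the lattice indices up to $\lfloor (s_1\wedge s_2)/\varepsilon_n\rfloor$, $\lfloor (t_1\wedge t_2)/\varepsilon_n\rfloor$. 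I would then recognize this as a Riemann sum: since $\pi_n \stackrel{w}{\rightarrow} \delta_{\bs{z}_0}$ and $\sigma$ is continuous, $\|f_{n,(i,j)}\|^2_\Hil/c_n = \int \sigma^2_{(\varepsilon_n i-\xi,\varepsilon_n j-\tau)}\pi_n(\ud\xi,\ud\tau) \to \sigma^2_{(\varepsilon_n i - s_0,\varepsilon_n j - t_0)}$, giving in the limit $\frac{\alpha_k^2}{k!}\int_{-s_0}^{s_1\wedge s_2-s_0}\int_{-t_0}^{t_1\wedge t_2-t_0}\sigma^{2p}_{(u,v)}\ud u\,\ud v$ after the shift $u=\varepsilon_n i-s_0$, $v=\varepsilon_n j - t_0$.

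\textbf{Step 2 (the tail \eqref{eq:chnorm}).} Here \eqref{eq:chinner} with $(s_1,t_1)=(s_2,t_2)$ already identifies each $k!\|F^{(n,k)}_{(s_1,t_1)}\|^2_{\Hil^{\otimes k}}$ in the limit as $\frac{\alpha_k^2}{k!}\int\sigma^{2p}$, and by \eqref{eq:hermitecoef} these are summable. To pass the $\limsup_n$ inside, I would establish a uniform-in-$n$ bound: using $\|F^{(n,k)}\|^2 \leqslant \frac{\alpha_k^2}{(k!)^2}\varepsilon_n^2\sum_{i_1,j_1,i_2,j_2}(\text{weights})|\langle\bar f,\bar f\rangle_\Hil|^k$, split into diagonal and off-diagonal; the diagonal is $\lesssim_\sigma \frac{\alpha_k^2}{(k!)^2}$ uniformly and summable by \eqref{eq:hermitecoef}, while the off-diagonal is negligible by the same $\overline{\rho}_n$-argument as in Step 1. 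This dominated-convergence reasoning lets me interchange $\limsup_n$ and $\sum_k$ and send $m\to\infty$.

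\textbf{Step 3 (the contraction \eqref{eq:chcontr}), the main obstacle.} The contraction norm $\|F^{(n,k)}\otimes_r F^{(n,k)}\|^2$ expands into a quadruple sum over lattice points $(i_a,j_a)$, $a=1,2,3,4$, whose summand is the four weights times a product of the form $\langle\bar f_1,\bar f_2\rangle^r\langle\bar f_3,\bar f_4\rangle^r\langle\bar f_1,\bar f_3\rangle^{k-r}\langle\bar f_2,\bar f_4\rangle^{k-r}$ (up to relabeling). I expect this to be the hardest step: one must show the whole quadruple sum, prefactored by $\varepsilon_n^4$, tends to zero. The strategy is that in any nonzero term at least one of the four correlation factors must couple two \emph{distinct} indices — one cannot have all four pairs coinciding unless the indices collapse so much that the effective number of free summation indices drops. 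Concretely, because each of the four inner products carries a power $\geqslant 1$ of a correlation and the off-diagonal correlations are $o(\varepsilon_n)$ by Lemma \ref{lem:corest}, each genuinely off-diagonal factor supplies a gain of $o(\varepsilon_n)$ that beats the $\varepsilon_n^{-1}$ cost of the corresponding free index. I would make this precise by a careful case analysis on which indices coincide, bounding every correlation not pinned to the diagonal by $\overline{\rho}_n$ and the rest by $1$ via \eqref{eq:lessthanone}, and counting the number of free indices in each configuration to confirm the total power of $\varepsilon_n$ is strictly positive. The delicate bookkeeping of the index-coincidence patterns, and checking that no configuration produces exactly $\varepsilon_n^0$, is where the real work lies.
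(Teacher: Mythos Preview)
Your proposal is correct and follows essentially the same route as the paper's proof. The paper's Step 3 is organized via the function $u_n(\mathbf{i}_1,\mathbf{i}_2,\mathbf{i}_3,\mathbf{i}_4) = \mathbf{1}_{\{\mathbf{i}_1=\mathbf{i}_2\}}+\mathbf{1}_{\{\mathbf{i}_3=\mathbf{i}_4\}}+\mathbf{1}_{\{\mathbf{i}_1=\mathbf{i}_3\}}+\mathbf{1}_{\{\mathbf{i}_2=\mathbf{i}_4\}}\in\{0,1,2,4\}$, with cardinality bounds $|u_n^{-1}(\{0\})|\lesssim\varepsilon_n^{-8}$, $|u_n^{-1}(\{1\})|\lesssim\varepsilon_n^{-6}$, $|u_n^{-1}(\{2\})|\lesssim\varepsilon_n^{-4}$, $|u_n^{-1}(\{4\})|\lesssim\varepsilon_n^{-2}$; combined with $\overline{\rho}_n=o(\varepsilon_n)$ this yields $\varepsilon_n^4\big(\varepsilon_n^{-8}o(\varepsilon_n^4)+\varepsilon_n^{-6}o(\varepsilon_n^3)+\varepsilon_n^{-4}o(\varepsilon_n^2)+\varepsilon_n^{-2}\big)=o(1)$, which is exactly the case analysis you anticipate.
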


\begin{proof}
Below, we use the index sets
\begin{equation*}\begin{split}
\mathcal{I}_n & \eqdefl \{ i_1 : 1 \leqslant i_1 \leqslant \lfloor s_1/\varepsilon_n \rfloor\} \times \{ j_1 : 1 \leqslant j_1 \leqslant \lfloor t_1/\varepsilon_n \rfloor\} \times \{ i_2 : 1 \leqslant i_2 \leqslant \lfloor s_2/\varepsilon_n \rfloor\} \\
& \qquad \times \{ j_2 : 1 \leqslant j_2 \leqslant \lfloor t_2/\varepsilon_n \rfloor\}
\end{split}
\end{equation*}
and
\begin{equation*}
\mathring{\mathcal{I}}_n \eqdefl \mathcal{I}_n \setminus \{ (i,j,i,j) : 1 \leqslant i \leqslant \lfloor (s_1\wedge s_2)/\varepsilon_n \rfloor, \, 1 \leqslant j \leqslant \lfloor (t_1 \wedge t_2)/\varepsilon_n\rfloor \}.
\end{equation*}
Let us expand
\begin{multline*}
k!\big\langle F^{(n,k)}_{(s_1,t_2)}, F^{(n,k)}_{(s_2,t_2)} \big\rangle_{\Hil^{\otimes k}} \\ 
= \frac{\alpha^2_k}{k!} \varepsilon^2_n \sum_{(i_1,j_1,i_2,j_2) \in \mathcal{I}_n} \bigg(\frac{\|f_{n,(i_1,j_1)}\|^2_\Hil \|f_{n,(i_2,j_2)}\|^2_\Hil}{c^2_n}\bigg)^{p/2}    \big\langle \bar{f}^{\otimes k}_{n,(i_1,j_1)}, \bar{f}^{\otimes k}_{n,(i_2,j_2)} \big\rangle_{\Hil^{\otimes k}},
\end{multline*}
where $\big\langle \bar{f}^{\otimes k}_{n,(i_1,j_1)}, \bar{f}^{\otimes k}_{n,(i_2,j_2)} \big\rangle_{\Hil^{\otimes k}} = \big\langle \bar{f}_{n,(i_1,j_1)}, \bar{f}_{n,(i_2,j_2)} \big\rangle^k_{\Hil}$.  As $k \geqslant 2$, we have by \eqref{eq:lessthanone},
\begin{equation*}
\begin{split}
\sup_{(i_1,j_1,i_2,j_2) \in \mathring{\mathcal{I}}_n}\big|\big\langle \bar{f}_{n,(i_1,j_1)}, \bar{f}_{n,(i_2,j_2)} \big\rangle_{\Hil}^k\big| & = \sup_{(i_1,j_1,i_2,j_2) \in \mathring{\mathcal{I}}_n} \big|\big\langle \bar{f}_{n,(i_1,j_1)}, \bar{f}_{n,(i_2,j_2)} \big\rangle_{\Hil}\big|^k\\
& \leqslant \sup_{(i_1,j_1,i_2,j_2) \in \mathring{\mathcal{I}}_n}  \big|\big\langle \bar{f}_{n,(i_1,j_1)}, \bar{f}_{n,(i_2,j_2)} \big\rangle_{\Hil}\big|^2 \\
& \leqslant \overline{\rho}^2_n =  o(\varepsilon^2_n).
\end{split}
\end{equation*}
\revised{The boundedness of $\sigma$ implies that $\|f_{n,(i,j)}\|^2_{\Hil}\lesssim_\sigma c_n$.
Thus, we can write
\begin{equation*}
k!\big\langle F^{(n,k)}_{(s_1,t_2)}, F^{(n,k)}_{(s_2,t_2)} \big\rangle_{\Hil^{\otimes k}} = \frac{\alpha^2_k}{k!} \Bigg(\varepsilon^2_n\sum_{i=1}^{\lfloor (s_1 \wedge s_2)/\varepsilon_n\rfloor} \sum_{j=1}^{\lfloor (t_1 \wedge t_2)/\varepsilon_n\rfloor} \bigg(\frac{\|f_{n,(i,j)}\|^2_\Hil}{c_n}\bigg)^{p} +  \Theta_{k,n} \Bigg),
\end{equation*}
where 
\begin{equation*}
\sup_{k \geqslant 2}|\Theta_{k,n}| \lesssim_{\sigma,p} \varepsilon^2_n \overline{\rho}^2_n|\mathring{\mathcal{I}}_n| \xrightarrow[n \rightarrow \infty]{} 0
\end{equation*}
by the bound $|\mathring{\mathcal{I}}_n| \lesssim_{(s_1,t_1),(s_2,t_2)} \varepsilon^{-4}_n$. }
 Now \eqref{eq:chinner} follows since
\begin{multline*}
\varepsilon^2_n\sum_{i=1}^{\lfloor(s_1 \wedge s_2)/\varepsilon_n\rfloor} \sum_{j=1}^{\lfloor (t_1 \wedge t_2)/\varepsilon_n\rfloor} \bigg(\frac{\|f_{n,(i,j)}\|^2_\Hil}{c_n}\bigg)^{p}\\  = \varepsilon^2_n\sum_{i=1}^{\lfloor (s_1 \wedge s_2)/\varepsilon_n\rfloor} \sum_{j=1}^{\lfloor (t_1 \wedge t_2)/\varepsilon_n\rfloor} \bigg( \int \sigma_{(\varepsilon_n i-\xi, \varepsilon_n j-\tau)}^2\pi_n(\ud \xi, \ud \tau)\bigg)^p \\
 \xrightarrow[ n\rightarrow \infty]{} \int_0^{s_1\wedge s_2} \int_0^{t_1\wedge t_2} \sigma_{(u,v)}^{2p}\ud u \ud v,
\end{multline*}
(cf.~the proof of Theorem \ref{thm:powerlln}).
Moreover, by the reverse Fatou's lemma and \eqref{eq:hermitecoef}, we obtain
\begin{equation*}
\revised{\limsup_{n\rightarrow\infty}\sum_{k=m}^{\infty} k!\big\|F^{(n,k)}_{(s_1,t_1)}\big\|^{2}_{\Hil^{\otimes k}}} \leqslant \sum_{k=m}^\infty \frac{\alpha^2_k}{k!} \int_0^{s_1} \int_0^{t_1} \sigma_{(u,v)}^{2p}\ud u \ud v \xrightarrow[ m\rightarrow \infty]{} 0,
\end{equation*}
establishing \eqref{eq:chnorm}.

To show \eqref{eq:chcontr}, we may use the bound
\begin{multline*}
\big\| F^{(n,k)}_{(s_1,t_1)}\otimes_r F^{(n,k)}_{(s_1,t_1)}\big\|^2_{\Hil^{\otimes 2(k-r)}} \\ 
\begin{aligned}
& \lesssim_{\sigma,k}  \varepsilon^4_n \sum_{(\mathbf{i}_1,\mathbf{i}_2,\mathbf{i}_3,\mathbf{i}_4) \in\mathcal{J}_n} \big\langle \bar{f}^{\otimes k}_{n,\mathbf{i}_1} \otimes_r \bar{f}^{\otimes k}_{n,\mathbf{i}_2}, \bar{f}^{\otimes k}_{n,\mathbf{i}_3} \otimes_r \bar{f}^{\otimes k}_{n,\mathbf{i}_4}\big\rangle_{\Hil^{\otimes 2(k-r)}} \\
& = \varepsilon^4_n\sum_{(\mathbf{i}_1,\mathbf{i}_2,\mathbf{i}_3,\mathbf{i}_4) \in\mathcal{J}_n}  \big\langle \bar{f}_{n,\mathbf{i}_1}, \bar{f}_{n,\mathbf{i}_2}\big\rangle^r_\Hil \big\langle \bar{f}_{n,\mathbf{i}_3}, \bar{f}_{n,\mathbf{i}_4}\big\rangle^r_\Hil   \big\langle \bar{f}_{n,\mathbf{i}_1}, \bar{f}_{n,\mathbf{i}_3}\big\rangle^{k-r}_\Hil \big\langle \bar{f}_{n,\mathbf{i}_2}, \bar{f}_{n,\mathbf{i}_4}\big\rangle^{k-r}_\Hil,
\end{aligned}
\end{multline*}
where
\begin{equation*}
\mathcal{J}_n \eqdefl (\{1,\ldots,\lfloor s_1/\varepsilon_n \rfloor \} \times \{1,\ldots,\lfloor t_1/\varepsilon_n \rfloor \})^4.
\end{equation*}
Since $r \geqslant 1$ and $k-r\geqslant 1$, we have by \eqref{eq:lessthanone},
\begin{multline}\label{eq:fourinnerproducts}
\varepsilon^4_n\sum_{(\mathbf{i}_1,\mathbf{i}_2,\mathbf{i}_3,\mathbf{i}_4) \in\mathcal{J}_n}  \big\langle \bar{f}_{n,\mathbf{i}_1}, \bar{f}_{n,\mathbf{i}_2}\big\rangle^r_\Hil \big\langle \bar{f}_{n,\mathbf{i}_3}, \bar{f}_{n,\mathbf{i}_4}\big\rangle^r_\Hil   \big\langle \bar{f}_{n,\mathbf{i}_1}, \bar{f}_{n,\mathbf{i}_3}\big\rangle^{k-r}_\Hil \big\langle \bar{f}_{n,\mathbf{i}_2}, \bar{f}_{n,\mathbf{i}_4}\big\rangle^{k-r}_\Hil\\
\leqslant \varepsilon^4_n \sum_{(\mathbf{i}_1,\mathbf{i}_2,\mathbf{i}_3,\mathbf{i}_4) \in\mathcal{J}_n} \big| \big\langle \bar{f}_{n,\mathbf{i}_1}, \bar{f}_{n,\mathbf{i}_2}\big\rangle_\Hil\big| \big| \big\langle \bar{f}_{n,\mathbf{i}_3}, \bar{f}_{n,\mathbf{i}_4}\big\rangle_\Hil\big| \big| \big\langle \bar{f}_{n,\mathbf{i}_1}, \bar{f}_{n,\mathbf{i}_3}\big\rangle_\Hil\big| \big| \big\langle \bar{f}_{n,\mathbf{i}_2}, \bar{f}_{n,\mathbf{i}_4}\big\rangle_\Hil\big|
\end{multline}
We use a simple combinatorial argument to deduce that the right-hand side (r.h.s.)\ of  the inequality \eqref{eq:fourinnerproducts} tends to zero. To this end, we define $u_n : \mathcal{J}_n \longrightarrow \{0,1,2,4\}$ by
\begin{equation*}
u_n(\mathbf{i}_1,\mathbf{i}_2,\mathbf{i}_3,\mathbf{i}_4) = \mathbf{1}_{\{\mathbf{i}_1=\mathbf{i}_2 \}} + \mathbf{1}_{\{\mathbf{i}_3=\mathbf{i}_4 \}} + \mathbf{1}_{\{\mathbf{i}_1=\mathbf{i}_3 \}} + \mathbf{1}_{\{\mathbf{i}_2=\mathbf{i}_4 \}},
\end{equation*}
 where the value $3$ is, indeed, never attained. It is straightforward to check that
\begin{align*}
|u_n^{-1}(\{ 0\})| & \lesssim_{(s_1,t_1)} \varepsilon^{-8}_n, & |u_n^{-1}(\{ 1\})| & \lesssim_{(s_1,t_1)} \varepsilon^{-6}_n, \\
|u_n^{-1}(\{ 2\})| & \lesssim_{(s_1,t_1)} \varepsilon^{-4}_n, & |u_n^{-1}(\{ 4\})| & \lesssim_{(s_1,t_1)} \varepsilon^{-2}_n,
\end{align*}
for all $n \in \N$.
Hence, splitting the summation on the r.h.s.\ of \eqref{eq:fourinnerproducts} by
\begin{equation*}
\sum_{\mathcal{J}_n} = \sum_{u_n^{-1}(\{ 0\})} + \sum_{u_n^{-1}(\{ 1\})} + \sum_{u_n^{-1}(\{ 2\})} + \sum_{u_n^{-1}(\{ 4\})}
\end{equation*}
and using Lemma \ref{lem:corest} leads to the bound
\begin{equation*}
\textrm{r.h.s.\ of \eqref{eq:fourinnerproducts}} \lesssim_{\sigma,k,(s_1,t_1)} \varepsilon_n^4 \big(\varepsilon^{-8}_n o(\varepsilon^4_n) + \varepsilon^{-6}_n o(\varepsilon^3_n) + \varepsilon^{-4}_n o(\varepsilon^2_n) + \varepsilon^{-2}_n\big) = o(1). \mbox{\qedhere}
\end{equation*}
\end{proof}

We are now ready to proceed to the actual proof of Theorem \ref{thm:powerclt}, building on the preceding three lemmas.

\begin{proof}[Proof of Theorem \ref{thm:powerclt}]

In this proof, unlike in the rest of the paper, the space $D([0,1]^2)$ is endowed with the \emph{Skorohod topology} (see Appendix \ref{app:skorohodspace}). \revised {Ultimately, we can switch to the uniform topology by Lemma \ref{lem:skorohod}, as $\Xi^{(p)}$ is a continuous random field. } 

\emph{Step 1:~Reductions.} 
It is clearly sufficient to show that
\begin{equation*}
Z^{(n)} \xrightarrow[n\rightarrow \infty]{L_{\mathcal{F}_\wn \otimes \mathcal{F}_\sigma}} \overline{\Xi}^{(p)} \quad \textrm{in $D([0,1]^2)$,}
\end{equation*}
where $\overline{\Xi}^{(p)} = (m_{2p}-m^2_p)^{1/2}\Xi^{(p)}$. The quadrant Brownian sheets $\big(W_{(x,t)}^{(i)}\big)_{(x,t) \in [0,1]^2}$, for $i= 1,2,3,4$, defined by
\begin{align*}  
W^{(1)}_{(s,t)} & \eqdefl I_1\big(\mathbf{1}_{[0,s]\times[0,t]}\big), & 
W^{(2)}_{(s,t)} & \eqdefl I_1\big(\mathbf{1}_{[-s,0]\times[0,t]}\big), \\
W^{(3)}_{(s,t)} & \eqdefl I_1\big(\mathbf{1}_{[-s,0]\times[-t,0]}\big), & 
W^{(4)}_{(s,t)} & \eqdefl I_1\big(\mathbf{1}_{[0,s]\times[-t,0]}\big),
\end{align*}
(modulo taking continuous modifications) generate the $\sigma$-algebra $\mathcal{F}_\wn$. 
Thus, by Lemma \ref{lem:stable}, it is sufficient to show that for any continuous, bounded test function $\varphi :  C([0,1]^2)^4 \times C([-1,1]^2)\times D([0,1]^2) \longrightarrow \R$,
\begin{equation}\label{eq:testfunction}
\E\big[\varphi\big(W^{(1)},W^{(2)},W^{(3)},W^{(4)},\sigma,Z^{(n)}\big)\big] 
\xrightarrow[n\rightarrow \infty]{}\E\big[\varphi\big(W^{(1)},W^{(2)},W^{(3)},W^{(4)},\sigma,\overline{\Xi}^{(p)}\big)\big],
\end{equation}
where $\sigma$ is stochastic. But, in the view of Fubini's theorem, it is clear that \eqref{eq:testfunction} follows if we simply show that
\begin{equation}\label{eq:sigmafixed}
\big(W^{(1)},W^{(2)},W^{(3)},W^{(4)},Z^{(n)}\big) \xrightarrow[n\rightarrow \infty]{L}\big(W^{(1)},W^{(2)},W^{(3)},W^{(4)},\overline{\Xi}^{(p)}\big)
\end{equation}
in $C([0,1]^2)^4\times D([0,1]^2)$, with the realization of $\sigma$ kept fixed. As usual, we will prove \eqref{eq:sigmafixed} by establishing convergence of the finite-dimensional distributions first, and then showing tightness.

\emph{Step 2:~Convergence of finite-dimensional distributions.} We fix arbitrary $d \in \N$ and $(\bs{s},\bs{t})\eqdefl \big((s_1,t_1),\ldots,(s_d,t_d)\big) \in [0,1]^{2d}$. Let us denote for any $i = 1,2,3,4$,
\begin{equation*}
W^{(i)}_{(\bs{s},\bs{t})}\eqdefl\big(W^{(i)}_{(s_1,t_1)},\ldots,W^{(i)}_{(s_d,t_d)}\big)
\end{equation*}
and for any $n \in \N$,
\begin{equation*}
Z^{(n)}_{(\bs{s},\bs{t})}\eqdefl\big(Z^{(n)}_{(s_1,t_1)},\ldots,Z^{(n)}_{(s_d,t_d)}\big).
\end{equation*}
We would like to show that
\begin{equation}\label{eq:fdd}
\big(W^{(1)}_{(\bs{s},\bs{t})}, W^{(2)}_{(\bs{s},\bs{t})}, W^{(3)}_{(\bs{s},\bs{t})}, W^{(4)}_{(\bs{s},\bs{t})}, Z^{(n)}_{(\bs{s},\bs{t})}\big) \xrightarrow[n\rightarrow \infty]{L} N_{5d}\bigg(\bs{0}, \, \begin{bmatrix} \bs{\Psi}^{(1)} & \bs{0} \\ \bs{0} & \bs{\Psi}^{(2)}\end{bmatrix}\bigg),
\end{equation}
where $\bs{\Psi}^{(1)}$ is the covariance matrix of $\big(W^{(1)}_{(\bs{s},\bs{t})}, W^{(2)}_{(\bs{s},\bs{t})}, W^{(3)}_{(\bs{s},\bs{t})}, W^{(4)}_{(\bs{s},\bs{t})} \big)$ and
\begin{equation*}
\bs{\Psi}^{(2)}_{i,j} \eqdefl (m_{2p}-m^2_p) \int\limits_{-s_0}^{s_i\wedge s_j-s_0}\int\limits_{-t_0}^{t_i\wedge t_j-t_0} \sigma_{(u,v)}^{2p}\ud u \ud v, \quad i,\, j = 1,\ldots,d.
\end{equation*} 
Note that $W^{(1)}_{(\bs{s},\bs{t})}$, $W^{(2)}_{(\bs{s},\bs{t})}$, $W^{(3)}_{(\bs{s},\bs{t})}$, and $W^{(4)}_{(\bs{s},\bs{t})}$ are vectors of first-order Wiener integrals, whereas the chaos decompositions of the components of $Z^{(n)}_{(\bs{s},\bs{t})}$ do not have any contributions from first-order integrals. Thus, \eqref{eq:fdd} holds whenever the chaos decompositions of $Z^{(n)}_{(\bs{s},\bs{t})}$,~$n\in \N$ satisfy the conditions of Lemma \ref{lem:chaosclt}, which is indeed the case due to Remark \ref{rem:hermite} and Lemma \ref{lem:kerasy}. 

\emph{Step 3:~Tightness.} Since marginal tightness implies joint tightness, it suffices to show that the sequence $\big(Z^{(n)}\big)_{n \in \N}$ is tight in $D([0,1]^2)$. Let us write
\begin{equation*}
\mathcal{T}_n \eqdefl \{ \varepsilon_n i : i = 0,1,\ldots,\lfloor \varepsilon_n^{-1} \rfloor \} \cup \{ 1 \}, \quad n \in \N.
\end{equation*}
Moreover, let $R$ be a rectangle with vertices in $\mathcal{T}^2_n$, that is, $R = (s_1,s_2]\times (t_1,t_2]$ for some $s_1,s_2,t_1,t_2\in \mathcal{T}_n$ such that $s_1 < s_2$ and $t_1 < t_2$.
By \cite[pp.\ 1658,\ 1665]{BW1971}, it is sufficient to show that 
\begin{equation}\label{eqn:tightnessbound}
\E_\wn\big[Z^{(n)}(R)^4\big] \lesssim_{\sigma,p} \lambda_2(R)^2.
\end{equation}
Recall that
\begin{equation*}
\begin{split}
Z^{(n)}(R) & = Z^{(n)}_{(s_2,t_2)}-Z^{(n)}_{(s_2,t_1)}-Z^{(n)}_{(s_1,t_2)}+Z^{(n)}_{(s_1,t_1)} \\
& = \varepsilon_n c^{-p/2}_n \sum_{i=\lfloor s_1/\varepsilon_n \rfloor+1}^{\lfloor s_2/\varepsilon_n \rfloor}\sum_{j=\lfloor t_1/\varepsilon_n \rfloor+1}^{\lfloor t_2/\varepsilon_n \rfloor} \|f_{n,(i,j)}\|_\Hil^{p} u_p \Big(\|f_{n,(i,j)}\|_\Hil^{-1}Y\big(R^{(n)}_{(k_n i,k_n j)} \big) \Big).
\end{split}
\end{equation*}
Since $\overline{\rho}_n \rightarrow 0$, there exists $n_0 \in \N$ such that $\sup_{n \geqslant n_0} \overline{\rho}_n < 1/12$. Thus, by Lemma \ref{lem:4thmom}, below, we have for all $n \geqslant n_0$,
\begin{equation*}
\begin{split} 
\E_\wn\big[Z^{(n)}(R)^4\big] & \lesssim_\sigma \varepsilon^4_n \E_\wn \bigg[ \bigg( \sum_{i=\lfloor s_1/\varepsilon_n \rfloor+1}^{\lfloor s_2/\varepsilon_n \rfloor}\sum_{j=\lfloor t_1/\varepsilon_n \rfloor+1}^{\lfloor t_2/\varepsilon_n \rfloor} u_p \Big(\|f_{n,i,j}\|_\Hil^{-1}Y\big(R^{(n)}_{(k_n i,k_n j)} \big) \Big)\bigg)^4\bigg] \\
& \lesssim_p \varepsilon^4_n \big(\varepsilon_n^{-8} \lambda_2(R)^4 \overline{\rho}_n^4 + \varepsilon_n^{-6} \lambda_2(R)^3 \overline{\rho}^2_n + \varepsilon_n^{-4} \lambda_2(R)^2\big) \\
& \leqslant\lambda_2(R)^2 \big(\varepsilon^{-4}_n \overline{\rho}^4_n+\varepsilon_n^{-2}\overline{\rho}^2_n + 1\big),
\end{split}
\end{equation*}
where we used \eqref{eq:cequiv} and the inequalities
$(\lfloor s_2/\varepsilon_n \rfloor-\lfloor s_1/\varepsilon_n \rfloor) (\lfloor t_2/\varepsilon_n \rfloor - \lfloor t_1/\varepsilon_n \rfloor) \leqslant \varepsilon_n^{-2} \lambda_2(R)$ and $\lambda_2(R) \leqslant 1$. Finally, $\overline{\rho}_n = o(\varepsilon_n)$ implies that
\begin{equation*}
\sup_{n \in \N} \big(\varepsilon^{-4}_n \overline{\rho}^4_n+\varepsilon_n^{-2}\overline{\rho}^2_n + 1\big) < \infty,
\end{equation*}
whence \eqref{eqn:tightnessbound} holds.
\end{proof}

It remains to prove the moment estimate stated in Lemma \ref{lem:4thmom} below, which we used above to establish tightness. It is similar to --- albeit much less general than --- Proposition 4.2 of \cite{Taqqu1977}. The key difference, however, is that unlike in \cite{Taqqu1977}, here the underlying Gaussian random variables do not form a \emph{stationary} process. (Alas, the assumption of stationarity renders Proposition 4.2 of \cite{Taqqu1977} unapplicable in our setting.) The proof relies on a product moment bound due to Soulier \cite[Corollary 2.1]{Sou2001} and a simple combinatorial argument.

\begin{lem}[Fourth moment]\label{lem:4thmom}
Let $(X_1,\ldots,X_n)$ be a Gaussian random vector such that $\E[X_i]=0$ and $\E[X^2_i]=1$ for all $i = 1,\ldots,n$. If $f\in L^2(\R,\gamma)$ has Hermite rank $r \in \N$ and
$
\rho \eqdefl \sup_{i\neq j} |\E[X_iX_j]| \leqslant \rho^*$ for some $\rho^* \in (0,1/12)$,
then
\begin{equation*}
\bigg|\E\bigg[\bigg(\sum_{i=1}^n f(X_i)\bigg)^4 \bigg]\bigg| \lesssim_{f,\rho^*} n^{4} \rho^{2r} + n^3 \rho^r + n^2.
\end{equation*}
\end{lem}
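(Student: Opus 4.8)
The plan is to expand the fourth moment into a sum of product moments, to bound each product moment by means of Soulier's inequality \cite[Corollary~2.1]{Sou2001}, and then to organize the summation according to the pattern of coincidences among the four summation indices. Writing out the power,
\[
\E\Big[\Big(\textstyle\sum_{i=1}^n f(X_i)\Big)^4\Big] = \sum_{i_1,i_2,i_3,i_4=1}^n \E\big[f(X_{i_1})f(X_{i_2})f(X_{i_3})f(X_{i_4})\big],
\]
I would first partition $\{1,\dots,n\}^4$ according to which of the four \emph{positions} $\{1,2,3,4\}$ carry equal indices. Each class corresponds to a set partition $P$ of $\{1,2,3,4\}$; there are only $15$ of these, and the number of quadruples realizing a given $P$ is of order $n^{|P|}$, where $|P|$ is the number of blocks of $P$.

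Next, for a fixed pattern I would bound the common value $\big|\E[\prod_j f(X_{i_j})]\big|$. Since $f$ has Hermite rank $r$ and $\rho\leqslant\rho^*<1/12$, Soulier's corollary applies and, unwinding the diagram formula it encapsulates, controls this product moment by a finite constant $C(f,\rho^*)$ times a sum over Gaussian diagrams on the four nodes in which every node has degree at least $r$ and no edge joins a node to itself; each diagram contributes the product of the correlations $|\E[X_{i_a}X_{i_b}]|$ over its edges. An edge joining two positions lying in the \emph{same} block of $P$ contributes a factor $1$, whereas an edge joining distinct blocks contributes a factor at most $\rho$. Factoring out the smallest attainable power of $\rho$ and using $\rho^*<1/12$ to guarantee that the residual diagram series (arising from the Hermite expansion of $f$) converges, I obtain $\big|\E[\prod_j f(X_{i_j})]\big| \lesssim_{f,\rho^*} \rho^{\gamma(P)}$, where $\gamma(P)$ is the minimal number of cross-block edges, counted with multiplicity, in any admissible diagram.

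The combinatorial core is then the evaluation of $\gamma(P)$. The guiding principle is that a singleton block forces at least $r$ cross edges at its node, while a block of size $\geqslant 2$ can meet its degree requirement through free internal edges. Carrying this out: for the all-distinct pattern ($|P|=4$) every edge is cross and a count of half-edges forces at least $2r$ edges, so $\gamma=2r$; for a single pair ($|P|=3$) the two singletons can be joined by $r$ mutual edges, giving $\gamma=r$; for two pairs ($|P|=2$) all degrees are met internally, so $\gamma=0$; for a triple plus a singleton ($|P|=2$) the singleton forces $\gamma=r$; and for the fully coincident pattern ($|P|=1$) one has $\gamma=0$, the product moment reducing to $\E[f(X)^4]$, finite since $f\in L^4(\gamma)$ in the intended application. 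Multiplying each estimate by the count $n^{|P|}$ and summing over the finitely many patterns gives
\[
\E\Big[\Big(\textstyle\sum_{i=1}^n f(X_i)\Big)^4\Big] \lesssim_{f,\rho^*} n^4\rho^{2r} + n^3\rho^r + n^2 + n^2\rho^r + n,
\]
and since the last two terms are dominated by the first three, the claim follows.

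The step I expect to be the main obstacle is the passage in the middle paragraph: verifying that Soulier's corollary really delivers the clean exponent $\gamma(P)$, that is, that the \emph{entire} diagram series, and not merely the leading diagram, is dominated by the minimal power of $\rho$, with the within-block contributions absorbed into the $f$-dependent constant. This is precisely where the smallness threshold $\rho^*<1/12$ enters essentially, since one must ensure that the combinatorial proliferation of diagrams coming from the full Hermite expansion of $f$ does not destroy the convergence that underlies the bound.
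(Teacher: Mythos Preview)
Your proposal is correct and follows the same skeleton as the paper: expand the fourth power, bound each product moment via Soulier's result, and organize the sum by coincidence pattern. The difference is mainly one of packaging. The paper invokes \cite[Corollary~2.1]{Sou2001} as a black box, obtaining directly
\[
\big|\E[f(X_{i_1})f(X_{i_2})f(X_{i_3})f(X_{i_4})]\big| \lesssim_{f,\rho^*} \rho^{r\tilde u/2},
\]
where $\tilde u = \tilde u(i_1,i_2,i_3,i_4)$ is the number of \emph{unrepeated} indices (so $\tilde u\in\{0,1,2,4\}$), and then partitions the index set by the value of $\tilde u$. You instead partition by the full set partition $P$ and attempt to extract a sharper exponent $\gamma(P)$ by unwinding the diagram formula; this yields $\rho^r$ rather than the paper's $\rho^{r/2}$ for the triple-plus-singleton pattern, but the gain is immaterial since that term is dominated by $n^2$ in either case. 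The ``main obstacle'' you flag---that the whole diagram series, not just the leading term, is controlled by the minimal power of $\rho$---is exactly what Soulier's corollary delivers, so there is no need to re-derive it; the threshold $\rho^*<1/12$ in the statement is precisely the hypothesis under which that corollary applies to a product of four factors. Your hedge about needing $f\in L^4(\gamma)$ for the fully coincident block is a genuine point if you argue via $\E[f(X)^4]$ directly; the paper avoids it by applying Soulier's bound uniformly across all patterns, including $\tilde u=0$.
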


\begin{proof}
We use first the trivial bound
\begin{equation}\label{eq:fourpoint}
\bigg|\E\bigg[\bigg(\sum_{i=1}^n f(X_i)\bigg)^4 \bigg]\bigg| \leqslant \sum_{i_1,i_2,i_3,i_4 = 1}^n |\E[f(X_{i_1})f(X_{i_2})f(X_{i_3})f(X_{i_4})]|.
\end{equation}
By Corollary 2.1 of \cite{Sou2001}, we have 
\begin{equation}\label{eq:4corrbound}
|\E[f(X_{i_1})f(X_{i_2})f(X_{i_3})f(X_{i_4})]| \lesssim_{f,\rho^*} \rho^{r\tilde{u}_n(i_1,i_2,i_3,i_4)/2},
\end{equation}
where
\begin{equation*}
\tilde{u}_n (i_1,i_2,i_3,i_4) \eqdefl |\{i : \textrm{$i = i_k$ for some $k$ and $i \neq i_l$ for any $l \neq k$}  \}|
\end{equation*}
is the number of unrepeated indices in $(i_1,i_2,i_3,i_4)$ (cf.~the function $u_n$ in the proof of Lemma \ref{lem:kerasy}).
 Note that $\tilde{u}_n$ is a mapping from $\{1,\ldots,n \}^4$ onto $\{0,1,2,4 \}$, since it is impossible to have exactly three indices that are not repeated. It is key to observe that
\begin{align*}
|\tilde{u}_n^{-1}(\{0\})| & \lesssim n^2 + n \lesssim n^2, & |\tilde{u}_n^{-1}(\{1\})| & \lesssim n^2, \\
|\tilde{u}_n^{-1}(\{2\})| & \lesssim n^3, & |\tilde{u}_n^{-1}(\{4\})| & \leqslant n^4,
\end{align*}
for all $n \in\N$. (In the case $\tilde{u}_n(i_1,i_2,i_3,i_4)=0$, either $i_1=i_2=i_3=i_4$ or there are two distinct pairs of repeated indices.) Thus, by \eqref{eq:fourpoint} and \eqref{eq:4corrbound}, we obtain
\[
\bigg|\E\bigg[\bigg(\sum_{i=1}^n f(X_i)\bigg)^4 \bigg]\bigg| \lesssim_{f,\rho^*} n^4 \rho^{2r} + n^3 \rho^r + n^2 \rho^{r/2} + n^2 \leqslant n^{4} \rho^{2r} + n^3 \rho^r + n^2,
\]
which completes the proof.
\end{proof}

\section{Asymptotics of concentration measures}\label{sec:weight}

In this section, we prove Proposition \ref{prop:examplekernel} by deriving polynomial estimates for the integrals of the weight function $g$ over some certain decisive subsets of $\R^2$. As the proof of Proposition \ref{prop:trianglekernel} uses a closely related argument, we merely sketch its main points.

\subsection{Proof of Proposition \ref{prop:examplekernel}}\label{sec:singular}

Let $g \in L^2(\R^2)$ be given by \eqref{eq:gspec}. It will be convenient to consider the non-normalized measures
\begin{equation*}
\mu_n(\ud s, \ud t) \eqdefl h_n(s,t)^2\ud s \ud t, \quad n \in \N.
\end{equation*}
Note that the support of $\mu_n$, like $\pi_n$, is contained in $[0,1+1/n]^2$. Under the present assumptions $g$ is a symmetric function, which clearly implies that also $h_n$ is symmetric. Thus, as $\mu_n$ is absolutely continuous with respect to the Lebesgue measure, we have
 \begin{equation}\label{eq:triangle}
 \pi_n \big(\R^2 \setminus E_n \big) = \frac{\mu_n\big( (0,1+1/n)^2\setminus (0,\varepsilon_n)^2\big)}{\mu_n\big( (0,1+1/n)^2\big)} = \frac{\mu_n\big( T_n\setminus (0,\varepsilon_n)^2\big)}{\mu_n( T_n )},
 \end{equation}
where $T_n \eqdefl \{ (s,t) : 0 < t < s < 1+ 1/n \}$. On certain subsets of $T_n$, the expression for $h_n(s,t)$ can be simplified significantly. To make use of this fact, we define for any $n \in \N$,
$\tilde{E}_n \eqdefl \{ (s,t) : 0<t<s <1/n\}$
and
\begin{align*}
B^{(1)}_n & \eqdefl (\varepsilon_n,1) \times (0,1/n),\\ B^{(2)}_n & \eqdefl \{(s,t) : \varepsilon_n < s < 1, \, s-1/n < t < s\}, \\
B^{(3)}_n & \eqdefl \{(s,t) : \varepsilon_n < s < 1 + 1/n, \, 1/n < t < (s-1/n)  \}, \\
B^{(4)}_n & \eqdefl (1,1+1/n)\times (0,1/n) \cup \{ (s,t) : 1 < s <1+1/n, \, s-1/n < t <s\}.
\end{align*}
(See Figure \ref{fig:partition}.)
\begin{figure}
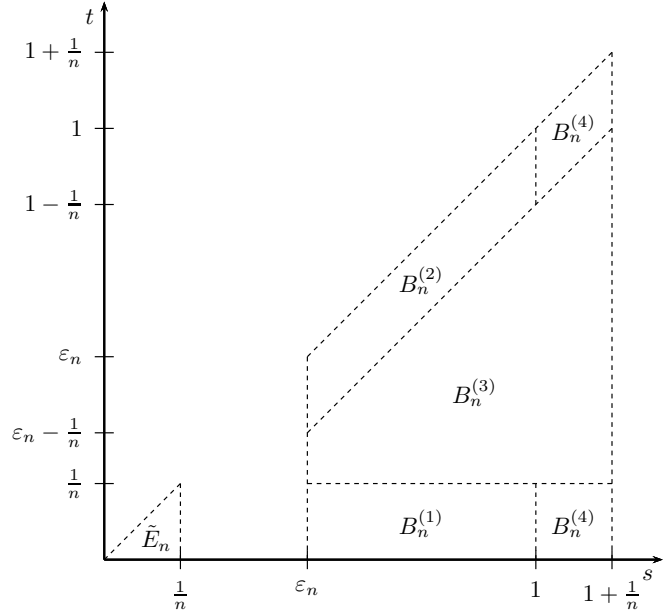

  \myfigure{scale=0.9}{partition-v3-crop}
  \caption{\label{fig:partition} The sets $\tilde{E}_n$, $B^{(1)}_n$, $B^{(2)}_n$, $B^{(3)}_n$, and $B^{(4)}_n$.}
\end{figure}
To prepare for the proof of Proposition \ref{prop:examplekernel}, we establish next polynomial estimates for the asymptotic behavior of the $\mu_n$-measures of some decisive subsets of $T_n$ as $n \rightarrow \infty$. 
\begin{lem}[Polynomial bounds]\label{lem:polybounds} Suppose that the assumptions of Proposition \ref{prop:examplekernel} hold and denote 
\begin{equation*}
n_0 \eqdefl \inf\{ n \geq 4 : \textrm{$k_n \geqslant 2$, $|\ell(x)|>0$ for all $x \in (0,1/n)$} \}. 
\end{equation*}
Then,
\begin{enumerate}[label=(\arabic*),ref=\arabic*]
\item\label{it:singularity} $\mu_n\big((0,\varepsilon_n)^2\cap T_n\big) \gtrsim_{\alpha,\ell} n^{-2(1-\alpha)}$ for all $n \geqslant n_0$,
\item\label{it:slow1} $\mu_n\big(B^{(1)}_n\big)=O(n^{-3+\kappa (2\alpha+1)})$,
\item\label{it:slow2} $\mu_n\big(B^{(2)}_n\big)=O(n^{-3+\kappa (2\alpha+1)})$,
\item\label{it:vanish} $\mu_n\big(B^{(3)}_n\big)= 0$ for all $n \geqslant n_0$,
\item\label{it:boundary} $\mu_n\big(B^{(4)}_n\big)=o(n^{-2})$.
\end{enumerate}
\end{lem}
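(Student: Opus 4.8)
The plan is to compute $h_n$ essentially explicitly on each region, exploiting that $g$ depends on $(s,t)$ only through $s\vee t$. Writing $\phi(r)\eqdefl r^{-\alpha}\ell(r)$, we have $g(s,t)=\phi(s\vee t)$ for $(s,t)\in(0,1)^2$ and $g(s,t)=0$ otherwise; since $g$ (hence $h_n$) is symmetric it suffices to work on the lower triangle $\{t<s\}\supset T_n$. On each region I would decide, for each of the four points $(s,t),\,(s-1/n,t),\,(s,t-1/n),\,(s-1/n,t-1/n)$, whether it lies in the support $(0,1)^2$ and, if so, which coordinate attains the maximum; the value of $h_n$ then drops out by cancellation. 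I would dispatch the two exact statements first. On $\tilde E_n=\{0<t<s<1/n\}$ the three shifted points leave the support, so $h_n(s,t)=\phi(s)=s^{-\alpha}\ell(s)$; since $\varepsilon_n\ge 2/n$ gives $\tilde E_n\subset(0,\varepsilon_n)^2\cap T_n$, Fubini yields $\mu_n\big((0,\varepsilon_n)^2\cap T_n\big)\ge\int_0^{1/n}s\,s^{-2\alpha}\ell(s)^2\,\ud s$, and bounding $|\ell|$ below by a positive constant near $0$ (legitimate because $\ell(0+)\neq 0$; finitely many small $n$ are absorbed into the implied constant) gives $\gtrsim_{\alpha,\ell}\int_0^{1/n}s^{1-2\alpha}\,\ud s\asymp n^{-2(1-\alpha)}$, using $\alpha<1$. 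This is \eqref{it:singularity}. For \eqref{it:vanish}, on $B^{(3)}_n$ one has $t+1/n<s$, so in all four points the $s$-coordinate strictly dominates; the maxima are $s,\,s-1/n,\,s,\,s-1/n$ (whether or not the points with $s\ge 1$ are truncated), and they cancel in pairs, so $h_n\equiv 0$ and $\mu_n(B^{(3)}_n)=0$.

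For \eqref{it:slow1} and \eqref{it:slow2} the $t$-differencing survives only partially, leaving a single difference of $\phi$: on $B^{(1)}_n$ the two points with $t-1/n<0$ drop out and $h_n=\phi(s)-\phi(s-1/n)$, while on $B^{(2)}_n$ the surviving maxima give $h_n=\phi(s-1/n)-\phi(t)$ with $|t-(s-1/n)|<1/n$. In both cases I would apply the mean value theorem together with the uniform bound $|\phi'(r)|\le C_{\alpha,\ell}\,r^{-\alpha-1}$ on $(0,1)$, which follows from $\|\ell\|_\infty,\|\ell'\|_\infty<\infty$ and $r^{-\alpha}\le r^{-\alpha-1}$, noting that $s>\varepsilon_n\ge 2/n$ keeps $s-1/n\gtrsim\varepsilon_n$ away from the singularity at $0$. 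Squaring and integrating, a factor $n^{-3}$ appears in each case: for $B^{(1)}_n$, where $h_n$ is constant in $t$, from $h_n^2\lesssim n^{-2}(s-1/n)^{-2\alpha-2}$ times the $t$-length $n^{-1}$; for $B^{(2)}_n$ from $h_n^2\lesssim (s-1/n)^{-2\alpha-2}\,(t-(s-1/n))^2$ and $\int_0^{1/n}w^2\,\ud w\asymp n^{-3}$. The common outer integral supplies the rest,
\[
\int_{\varepsilon_n}^{1}(s-1/n)^{-2\alpha-2}\,\ud s\asymp\varepsilon_n^{-2\alpha-1}\asymp n^{\kappa(2\alpha+1)},
\]
the first relation because the exponent $-2\alpha-2<-1$ makes the lower endpoint $\varepsilon_n-1/n\asymp\varepsilon_n$ dominate, yielding $O\big(n^{-3+\kappa(2\alpha+1)}\big)$ in both cases.

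Finally, \eqref{it:boundary} concerns the boundary region near $s=1$, where $g(s,\cdot)=0$ but $g(s-1/n,\cdot)\neq 0$. There the surviving terms are $-\phi(s-1/n)$ or $\phi(s-1/n)-\phi(t)$ with all relevant arguments in $(1-1/n,1)$. The decisive input is $\ell(1-)=0$ together with $\|\ell'\|_\infty<\infty$, which give $|\ell(r)|=\big|\int_r^1\ell'(x)\,\ud x\big|\le\|\ell'\|_\infty(1-r)\le\|\ell'\|_\infty/n$ near $1$; since $r^{-\alpha}$ is bounded there, the mean value theorem (the factor $|\phi'|$ staying bounded near $1$ since $|\ell|$ is small and $|\ell'|$ is bounded) gives $|h_n|\lesssim_{\alpha,\ell} n^{-1}$ pointwise on $B^{(4)}_n$. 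As $\lambda_2(B^{(4)}_n)\lesssim n^{-2}$, this yields $\mu_n(B^{(4)}_n)\lesssim n^{-4}=o(n^{-2})$.

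The main obstacle throughout is not any single estimate but the bookkeeping in the case analysis—correctly tracking, in each region, which of the four translated points lie in $(0,1)^2$ and which coordinate realizes the maximum—since every value of $h_n$ and every cancellation hinges on getting this right. The genuine analytic content is modest and localized: the bound $|\phi'(r)|\lesssim r^{-\alpha-1}$ controlling the singularity at $0$ (used in \eqref{it:slow1}, \eqref{it:slow2}), and the linear vanishing $|\ell(r)|\lesssim 1-r$ at $1$ controlling the boundary (used in \eqref{it:boundary}).
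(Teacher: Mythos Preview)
Your proposal is correct and follows essentially the same approach as the paper: you identify the same explicit formulas for $h_n$ on each region (your $\phi$ is the paper's $f$), use the mean value theorem with the bound $|\phi'(r)|\lesssim r^{-\alpha-1}$ for $B^{(1)}_n$ and $B^{(2)}_n$, and handle $B^{(4)}_n$ via a pointwise bound on $h_n$ times the area. The only noteworthy difference is in \eqref{it:boundary}: the paper merely uses $\sup_{B^{(4)}_n}h_n^2\to 0$ (from $\ell(1-)=0$) to get $o(n^{-2})$, whereas you exploit the Lipschitz bound $|\ell(r)|\le\|\ell'\|_\infty(1-r)$ to obtain the sharper $O(n^{-4})$; both suffice.
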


\begin{proof}
Throughout the proof, we denote $f(s) \eqdefl s^{-\alpha}\ell(s)$ for all $s\in(0,1)$. It is straightforward to check that
\begin{equation}\label{eq:hsimplify}
h_n(s,t) = \begin{cases} f(s), & (s,t) \in \tilde{E}_n,\\
f(s) - f(s-1/n), & (s,t) \in B^{(1)}_n,\\
f(s-1/n) - f(t), & (s,t) \in B^{(2)}_n,\\
0, & (s,t) \in B^{(3)}_n.
\end{cases}
\end{equation}

\eqref{it:singularity}
The inclusion $\tilde{E}_n  \subset (0,\varepsilon_n)^2 \cap T_n$ implies that
$\mu_n\big(\tilde{E}_n\big) \leqslant \mu_n\big((0,\varepsilon_n)^2 \cap T_n\big)$.
By \eqref{eq:hsimplify}, we have
\begin{equation*}
\mu_n\big(\tilde{E}_n\big)  = \int_0^{1/n} \int_0^s \ud t f(s)^2 \ud s = \int_0^{1/n} sf(s)^2\ud s,
\end{equation*}
where $f(s)^2 \geqslant s^{-2\alpha} \inf_{u \in (0,1/n)} \ell(u)^2\gtrsim_{\ell} s^{-2\alpha}$.
Thus,
\begin{equation*}
\int_0^{1/n} sf(s)^2\ud s \gtrsim_\ell \int_0^{1/n} s^{-2\alpha + 1} \ud s = \frac{n^{-2(1-\alpha)}}{2(1-\alpha)}.
\end{equation*}

\eqref{it:slow1} Due to \eqref{eq:hsimplify}, we may write
\begin{equation}\label{eq:fdiff}
\begin{split}
\mu_n\big(B^{(1)}_n \big) & = \int_0^{1/n} \ud t \int_{\varepsilon_n}^{1} \big(f(s) - f(s-1/n)\big)^2\ud s\\
& = \frac{1}{n} \int^1_{\varepsilon_n} \big(f(s)-f(s-1/n)\big)^2 \ud s.
\end{split}
\end{equation}
By the mean value theorem, for any $s \in (\varepsilon_n,1)$, there exist $\xi_s \in [s-1/n,s]$ such that
\begin{equation}\label{eq:mvt}
f(s)-f(s-1/n) = \frac{1}{n}f'(\xi_s), 
\end{equation}
where
\begin{equation*}
f'(\xi_s) = -\alpha \xi_s^{-\alpha-1}\ell(\xi_s) + \xi_s^{-\alpha}\ell'(\xi_s).
\end{equation*}
Thus, we have the bounds
\begin{equation}\label{eq:diffbound}
\begin{split}
f'(\xi_s)^2 & \leqslant 2(\alpha^2 \xi_s^{-2(\alpha+1)}\| \ell \|^2_\infty + \xi_s^{-2\alpha} \|\ell'\|^2_\infty) \\
& \lesssim_{\alpha,\ell} (1+\xi_s^2)\xi_s^{-2(\alpha+1)}\\
& \leqslant 2(s-1/n)^{-2(\alpha+1)}.
\end{split}
\end{equation}
By plugging \eqref{eq:mvt} into \eqref{eq:fdiff} and applying \eqref{eq:diffbound} we arrive at
\begin{equation}\label{eq:finpowerbound}\begin{split}
\frac{1}{n} \int^1_{\varepsilon_n} \big(f(s)-f(s-1/n)\big)^2 \ud s &\lesssim_{\alpha,\ell} n^{-3}\int_{\varepsilon_n - 1/n}^\infty s^{-2(\alpha+1)}\ud s\\
& \lesssim_{\alpha} n^{-3} \varepsilon_n^{-2\alpha-1} = O(n^{-3+\kappa(2\alpha+1)}).
\end{split}
\end{equation}

\eqref{it:slow2} Proceeding as above, we have by \eqref{eq:hsimplify}, 
\begin{equation*}
\mu_n\big(B^{(2)}_n\big) = \int_{\varepsilon_n}^{1}\bigg(\int^s_{s-1/n}\big(f(s-1/n)-f(t)\big)^2 \ud t \bigg)\ud s,
\end{equation*}
where $0 \leqslant t-(s-1/n) \leqslant 1/n$. Thus,
\begin{equation*}
\big(f(s-1/n)-f(t)\big)^2 \lesssim_{\alpha,\ell} n^{-2}( s-1/n)^{-2(\alpha+1)},
\end{equation*}
by the mean value theorem and bounds analogous to \eqref{eq:diffbound}.
Moreover,
\begin{equation*}
\int_{\varepsilon_n}^{1}\bigg(\int^s_{s-1/n}\big(f(s-1/n)-f(t)\big)^2 \ud t \bigg)\ud s \lesssim_{\alpha,\ell} n^{-3}\int_{\varepsilon_n-1/n}^{\infty} s^{-2(\alpha+1)}\ud s,
\end{equation*}
and the assertion follows from \eqref{eq:finpowerbound}.

\eqref{it:vanish} 
Obvious, by \eqref{eq:hsimplify}.

\eqref{it:boundary} 
The estimate follows by observing that
\begin{equation*}
\mu_n \big(B^{(4)}_n\big) \leqslant \lambda_2\big(B^{(4)}_n\big) \sup_{(s,t)\in B^{(4)}_n} h_n(s,t)^2,
\end{equation*}
where $\lambda_2\big(B^{(4)}_n\big) \leqslant 2/n^2$ and $\sup_{(s,t)\in B^{(4)}_n} h_n(s,t)^2 \rightarrow 0$ as $n \rightarrow \infty$ because of the boundary condition $\lim_{s \rightarrow 1-}\ell(s)=0$.
\end{proof}

\begin{proof}[Proof of Proposition \ref{prop:examplekernel}]
As mentioned above, Assumption \ref{asm:clt} (in any form) implies Assumption \ref{asm:lln} with $\pi = \delta_{\bs{z}_0}$. Thus, in view of \eqref{eq:triangle}, it suffices to show that
\begin{equation*}
\varepsilon^{-2}_n\pi_n( \R^2 \setminus E_n) = \frac{\varepsilon^{-2}_n\mu_n\big( T_n \setminus (0,\varepsilon_n)^2\big)}{\mu_n\big( T_n \setminus (0,\varepsilon_n)^2\big)+ \mu_n\big( (0,\varepsilon_n)^2 \cap T_n \big)} \xrightarrow[n \rightarrow \infty]{}0,
\end{equation*}
which is equivalent to
\begin{equation}\label{eq:muconv}
M_n \eqdefl \frac{\varepsilon^2_n \mu_n\big( (0,\varepsilon_n)^2 \cap T_n \big)}{\mu_n\big( T_n \setminus (0,\varepsilon_n)^2\big)} \xrightarrow[n \rightarrow \infty]{}\infty.
\end{equation}
By Lemma \ref{lem:polybounds} and the assumption $\varepsilon_n \asymp n^{-\kappa}$,
\begin{equation*}
\begin{split}
\mu_n\big( T_n \setminus (0,\varepsilon_n)^2\big) & \leqslant \sum_{i=1}^4 \mu_n\big( B^{(i)}_n\big) \\
& = \begin{cases} O(n^{-3+\kappa(2\alpha+1)}), & \kappa \in \big[1/(2\alpha +1),1\big),\\
o(n^{-2}), & \kappa \in \big(0, 1/(2\alpha +1)\big),
\end{cases}
\end{split}
\end{equation*}
and
\begin{equation*}
\varepsilon^2_n \mu_n\big( (0,\varepsilon_n)^2 \cap T_n \big) \gtrsim_{\alpha,\ell} n^{-2(1+\kappa-\alpha)}, \quad n \geqslant n_0.
\end{equation*}

Thus, when $\kappa \in \big(0, 1/(2\alpha +1)\big)$ we have
\begin{equation*}
M_n \gtrsim_{\alpha,\ell} \frac{n^{2(\alpha-\kappa)}}{o(1)},
\end{equation*}
whence \eqref{eq:muconv} holds if $\kappa \leqslant \alpha$.
In the case $\kappa \in \big[1/(2\alpha +1),1\big)$,
\begin{equation*}
M_n \gtrsim_{\alpha,\ell} \frac{n^{2(\alpha-\kappa-1)}}{O(n^{-3+\kappa(2\alpha+1)})}
\end{equation*}
and, consequently, \eqref{eq:muconv} holds provided that $\kappa < (2\alpha +1)/(2\alpha + 3)$.
It remains to note that for $\alpha \in (0,1/2)$,
\begin{equation}\label{eq:alphasmall}
\alpha < \frac{2\alpha+1}{2\alpha+3} < \frac{1}{2\alpha + 1},
\end{equation}
whereas for $\alpha \in [1/2,1)$,
\begin{equation}\label{eq:alphalarge}
\frac{1}{2\alpha + 1} \leqslant \frac{2\alpha+1}{2\alpha+3} \leqslant \alpha.
\end{equation}
The sufficiency of the asserted conditions can now be verified using the inequalities \eqref{eq:alphasmall} and \eqref{eq:alphalarge}.
\end{proof}

\subsection{Sketch of the proof of Proposition \ref{prop:trianglekernel}}\label{subsec:triangle}

\begin{figure}
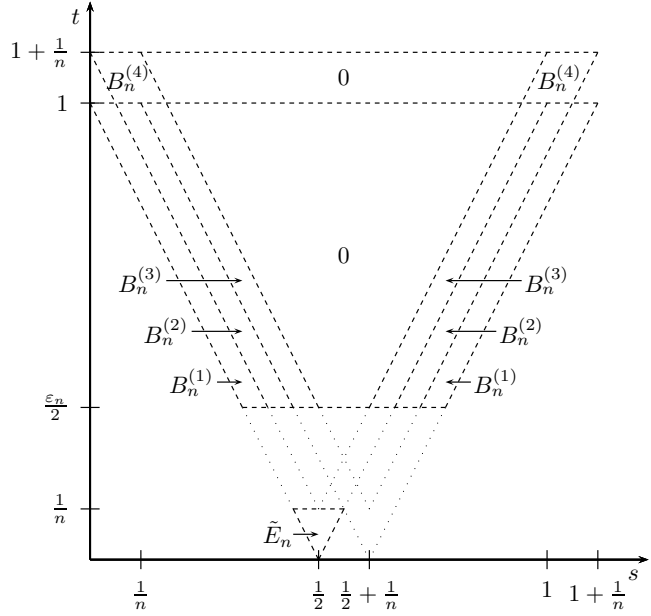

  \myfigure{scale=0.9}{triangle-crop}
  \caption{\label{fig:triangle} The redefined sets $\tilde{E}_n$, $B^{(1)}_n$, $B^{(2)}_n$, $B^{(3)}_n$, and $B^{(4)}_n$. The sets labeled with zeros are $\mu_n$-null sets for large $n$ under the specification of $g$ by \eqref{eq:gspec2}.} 
\end{figure}

Let now $g \in L^2(\R^2)$ be given by \eqref{eq:gspec2}.
We redefine the sets $\tilde{E}_n$, $B^{(1)}_n$, $B^{(2)}_n$, $B^{(3)}_n$, and $B^{(4)}_n$, $n \in \N$ as indicated in Figure \ref{fig:triangle}. Moreover, the measures $\mu_n$, $n\in\N$, are redefined accordingly.

\begin{proof}[Proof of Proposition \ref{prop:trianglekernel} (sketch)]
Similarly to the proof of Lemma \revised{\ref{lem:polybounds}}, we have
\begin{equation*}
\mu_n\big(\tilde{E}_n\big) = 
\int_{0}^{1/n} t f(t)^2 \ud t \gtrsim_{\ell} \int_{0}^{1/n} t^{-2\alpha+1} \ud t =
 \frac{n^{-2(1-\alpha)}}{2(1-\alpha)}
\end{equation*}
and
\begin{equation*}
\mu_n\big(B^{(2)}_n\big) = 
\frac{1}{n}\int^{1}_{\varepsilon_n/2} \big(f(t)-f(t-1/n)\big)^2 \ud t = O(n^{-3+\kappa(2\alpha+1)}). 
\end{equation*}Additionally, $\mu_n\big(B^{(4)}_n\big) = o(n^{-2})$. For the remaining two sets, we obtain
\begin{equation*}
\mu_n\big(B^{(1)}_n\big) =  
\frac{1}{n}\int^{1}_{\varepsilon_n/2} f(t)^2 \ud t \lesssim_{\ell} \frac{1}{n}\int^{1}_{\varepsilon_n/2} t^{-2\alpha} \ud t
\end{equation*}
and
\begin{equation*}
\mu_n\big(B^{(3)}_n\big) =  
\frac{1}{n}\int^{1}_{\varepsilon_n/2} f(t-1/n)^2 \ud t \lesssim_\ell \frac{1}{n}\int^{1}_{\varepsilon_n/2-1/n} t^{-2\alpha} \ud t,
\end{equation*}
and observing that $\varepsilon_n /2 \asymp \varepsilon_n /2 -1/n \asymp \varepsilon_n \asymp n^{-\kappa}$ (note that $\asymp$ is an equivalence relation), we have
\begin{equation}\label{eq:dominant}
\mu_n\big(B^{(1)}_n\big)+\mu_n\big(B^{(3)}_n\big)=O(n^{-1+\kappa(2\alpha-1)}).
\end{equation}

To prove the assertion, it suffices to show that
\begin{equation}\label{eq:ratio}
M_n \eqdefl \frac{\varepsilon_n^2 \mu_n\big(\tilde{E}_n\big)}{\sum_{i=1}^4 \mu_n\big(B^{(i)}_n\big)} \xrightarrow[n \rightarrow \infty]{}\infty.
\end{equation}
Since the contribution of \eqref{eq:dominant} is dominant in the denominator of \eqref{eq:ratio}, we have
\begin{equation*}
M_n \gtrsim_{\ell,\alpha} n^{(2\alpha-1)-\kappa(2\alpha+1)},
\end{equation*}
whence \eqref{eq:ratio} holds provided that $\kappa < (2\alpha-1)/(2\alpha+1)$.
\end{proof}

\section*{Acknowledgements}

I would like to thank Ole E.~Barndorff-Nielsen, Svend Erik Graversen, and Mark Podolskij for valuable discussions \revised{and two referees for helpful comments}.
Also, I acknowledge support 
from CREATES (DNRF78), funded by the Danish National Research Foundation, 
from the Aarhus University Research Foundation regarding the  project ``Stochastic and Econometric Analysis of Commodity Markets", and
from the Academy of Finland (project 258042).

\appendix

\section{Uniform convergence of functions of two variables} 

It is a well-known fact that if non-decreasing functions on $[0,1]$ converge pointwise to a continuous function, then the convergence is, in fact, uniform. 
In the proof of Theorem \ref{thm:powerlln} we invoke the following analogous result that applies to functions on $[0,1]^2$.

\begin{lem}[Uniform convergence]\label{lem:unifconv}
Let $f_1,f_2,\ldots$ be functions $[0,1]^2 \longrightarrow \R$ such that for any $n \in \N$,
\begin{equation*}
 f_n(s,t) \leqslant f_n(u,v) \quad \textrm{if $s \leqslant u$ and $t \leqslant v$,}
\end{equation*}
and let $f \in C([0,1]^2)$. If $f_n(s,t) \rightarrow f(s,t)$ for any $(s,t)\in[0,1]^2$, then $f_n \rightarrow f$ uniformly. 
\end{lem}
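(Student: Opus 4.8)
The plan is to adapt the classical Pólya-type argument (uniform convergence of monotone functions to a continuous limit) to the two-parameter setting, exploiting the fact that the coordinatewise monotonicity hypothesis sandwiches $f_n(s,t)$ between its values at the lower-left and upper-right corners of any rectangle containing $(s,t)$. The first step is to fix $\varepsilon > 0$ and invoke the uniform continuity of $f$ on the compact square $[0,1]^2$ to obtain $\delta > 0$ such that $|f(\mathbf{x}) - f(\mathbf{y})| < \varepsilon$ whenever $\|\mathbf{x}-\mathbf{y}\| < \delta$. I would then choose a finite partition $0 = x_0 < x_1 < \cdots < x_N = 1$ of mesh smaller than $\delta/\sqrt{2}$, so that each closed cell $[x_i,x_{i+1}]\times[x_j,x_{j+1}]$ has diameter less than $\delta$, and work with the finite grid $\{(x_i,x_j) : 0 \leqslant i,j \leqslant N\}$.

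Because this grid is finite and $f_n \to f$ pointwise, there exists $n_0$ such that $|f_n(x_i,x_j) - f(x_i,x_j)| < \varepsilon$ for every grid point and every $n \geqslant n_0$. The key step is then the sandwich: given an arbitrary $(s,t) \in [0,1]^2$, pick indices $i,j$ with $x_i \leqslant s \leqslant x_{i+1}$ and $x_j \leqslant t \leqslant x_{j+1}$, so that the monotonicity hypothesis gives
\begin{equation*}
f_n(x_i,x_j) \leqslant f_n(s,t) \leqslant f_n(x_{i+1},x_{j+1}).
\end{equation*}
For $n \geqslant n_0$ the upper bound yields $f_n(s,t) < f(x_{i+1},x_{j+1}) + \varepsilon$, and since $(x_{i+1},x_{j+1})$ lies at distance less than $\delta$ from $(s,t)$ this is at most $f(s,t) + 2\varepsilon$; the lower bound is treated symmetrically through $f_n(x_i,x_j) > f(x_i,x_j) - \varepsilon > f(s,t) - 2\varepsilon$. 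Combining the two gives $|f_n(s,t) - f(s,t)| < 2\varepsilon$ uniformly in $(s,t)$ for all $n \geqslant n_0$, which is the assertion.

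The point requiring the most care—though it is not a deep difficulty—is checking that the coordinatewise monotonicity $f_n(s,t)\leqslant f_n(u,v)$ for $s\leqslant u$, $t\leqslant v$ really does validate both corner inequalities simultaneously, since it is exactly the two-dimensional analogue of ``nondecreasing'' that makes the enclosing-cell sandwich work. I would also emphasize that, unlike in the one-dimensional Pólya theorem, no continuity or other regularity of the $f_n$ themselves is needed: monotonicity alone supplies the sandwich, and the continuity of the limit enters only through its uniform continuity on the compact square. The argument is thus self-contained and uses nothing beyond the stated hypotheses.
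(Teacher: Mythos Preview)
Your argument is correct and is precisely the ``straightforward adaptation of the standard argument used in the univariate case'' to which the paper's own proof defers without writing out the details. There is nothing to add: your grid-and-sandwich proof is the intended one.
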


\begin{proof}
The assertion follows from a straightforward adaptation of the standard argument used in the univariate case (see, e.g., \cite[pp.~113--114]{Boas1996}). 
\end{proof}

\section{On the two-variable generalization of the c\`adl\`ag property}\label{app:skorohodspace}

We will review briefly the natural generalization of the \emph{c\`adl\`ag} property (continuity from the right with  finite limits from the left) for functions on $[0,1]^2$, following the formulation due to Neuhaus \cite{Neu1971}. 
To this end, we introduce for any $(s,t)\in [0,1]^2$ the quadrants
\begin{align*}
Q_1(s,t) &\eqdefl (s,1] \times (t,1],&
Q_2(s,t) &\eqdefl [0,s) \times (t,1],\\ 
Q_3(s,t) &\eqdefl [0,s) \times [0,t),&
Q_4(s,t) &\eqdefl (s,1] \times [0,t).
\end{align*}
The space $D([0,1]^2)$ consists of functions $f : [0,1]^2 \longrightarrow \R$ such that for any $(s,t) \in [0,1]^2$, the following two conditions hold.
\begin{itemize}
\item We have $f(s_n,t_n) \rightarrow f(s,t)$ if $(s_n,t_n)$ is a sequence in $Q_1(s,t)$ such that $(s_n,t_n) \rightarrow (s,t)$,
\item For any $i = 2,3,4$, there exists $\tilde{f}_i(s,t) \in \R$ that satisfies $f(s_n,t_n) \rightarrow \tilde{f}_i(s,t)$ if $(s_n,t_n)$ is a sequence in $Q_i(s,t)$ such that $(s_n,t_n) \rightarrow (s,t)$.
\end{itemize}
In other words, $f \in D([0,1]^2)$ is continuous from the direction of the first quadrant (cf.~the \emph{c\`ad} property) and has limits from the directions of the other three quadrants (cf.~the \emph{l\`ag} property). Clearly, we have $C([0,1]^2) \subset D([0,1]^2)$.

  The space $D([0,1]^2)$ can be endowed with the generalized Skorohod topology defined by Bickel and Wichura \cite{BW1971} and Neuhaus \cite{Neu1971}, which can be characterized in terms of convergence of sequences as follows. Let us denote by $\Lambda$ the class of mappings $\lambda : [0,1]^2 \longrightarrow [0,1]^2$ such that $\lambda(s,t) = \big(\lambda^{(1)}(s),\lambda^{(2)}(t)\big)$, where $\lambda^{(1)}$ and $\lambda^{(2)}$ are increasing bijections $[0,1]\longrightarrow[0,1]$.

\begin{defn}[Skorohod topology]
Let $f,f_1,f_2\ldots \in D([0,1]^2)$. We say that $f_n \rightarrow f$ in the \emph{Skorohod topology} if
 there exist $\lambda_1, \lambda_2,\ldots \in \Lambda$ such that 
\begin{equation}\label{eq:sko}
\sup_{(s,t)\in [0,1]^2} | f_n \circ \lambda_n (s,t) - f(s,t)|+  \sup_{(s,t)\in [0,1]^2} \| \lambda_n(s,t)-(s,t)\|  \xrightarrow[ n\rightarrow \infty]{} 0.
\end{equation}
\end{defn}

There is a \emph{Skorohod metric} on $D([0,1]^2)$ that is consistent with the convergence defined above (see \cite[p.\ 1289]{Neu1971}). Equipped with this metric, $D([0,1]^2)$ enjoys the usual properties of separability and completeness (i.e., it is a \emph{Polish} space), similarly to $D([0,1])$. 

We use the Skorohod topology merely as a technical tool to establish convergence in law in the proof of Theorem \ref{thm:powerclt}, using the relatively tractable tightness criteria for the Skorohod topology \cite[pp.~1665--1666]{BW1971}. Since the limit obtained in Theorem \ref{thm:powerclt} is a continuous random field, we may --- equivalently --- equip $D([0,1]^2)$ with the (non-separable) \emph{uniform topology}, thanks to the following result. 

\begin{lem}[Uniform convergence]\label{lem:skorohod}
Let $f_1,f_2,\ldots \in D([0,1]^2)$ and $f \in C([0,1]^2)$. Then, $f_n \rightarrow f$ in the Skorohod topology if and only if $f_n \rightarrow f$ uniformly.
\end{lem}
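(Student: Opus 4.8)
The plan is to treat the two implications separately, since one is immediate while the other relies crucially on the continuity of the limit $f$.

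First, the implication ``uniform $\Rightarrow$ Skorohod'' is trivial. Taking $\lambda_n$ to be the identity map on $[0,1]^2$ for every $n$ — which belongs to $\Lambda$, as it is induced by the identity bijections of $[0,1]$ — the second supremum in \eqref{eq:sko} vanishes identically, while the first reduces to $\sup_{(s,t)\in[0,1]^2} |f_n(s,t) - f(s,t)|$, which tends to zero by hypothesis.

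For the converse, suppose $f_n \rightarrow f$ in the Skorohod topology, witnessed by maps $\lambda_n \in \Lambda$ with $\delta_n \eqdefl \sup_{(s,t)} |f_n \circ \lambda_n(s,t) - f(s,t)| \rightarrow 0$ and $\eta_n \eqdefl \sup_{(s,t)} \|\lambda_n(s,t) - (s,t)\| \rightarrow 0$. Since each $\lambda_n(s,t) = \big(\lambda_n^{(1)}(s),\lambda_n^{(2)}(t)\big)$ is built from increasing bijections of $[0,1]$, it is itself a bijection of $[0,1]^2$; let $\lambda_n^{-1}$ denote its inverse. The idea is to undo the coordinate change: for fixed $(s,t)$, set $(s',t') \eqdefl \lambda_n^{-1}(s,t)$, so that $\lambda_n(s',t') = (s,t)$ and hence $f_n(s,t) = (f_n \circ \lambda_n)(s',t')$. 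The triangle inequality then gives
\[
|f_n(s,t) - f(s,t)| \leqslant |(f_n \circ \lambda_n)(s',t') - f(s',t')| + |f(s',t') - f(s,t)| \leqslant \delta_n + |f(s',t') - f(s,t)|.
\]

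It remains to control the last term uniformly in $(s,t)$, and this is the only place where $f \in C([0,1]^2)$ enters. The point $(s',t')$ differs from $(s,t) = \lambda_n(s',t')$ by exactly $\|\lambda_n(s',t') - (s',t')\| \leqslant \eta_n$, so introducing the modulus of continuity $\omega_f(\delta) \eqdefl \sup\{|f(\bs{z}) - f(\bs{z}')| : \|\bs{z}-\bs{z}'\| \leqslant \delta\}$ yields the bound $|f(s',t') - f(s,t)| \leqslant \omega_f(\eta_n)$, which is independent of $(s,t)$. Taking the supremum over $(s,t) \in [0,1]^2$ gives $\sup_{(s,t)} |f_n(s,t) - f(s,t)| \leqslant \delta_n + \omega_f(\eta_n)$. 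Because $f$ is continuous on the compact square $[0,1]^2$, it is uniformly continuous, so $\omega_f(\eta_n) \rightarrow 0$ as $\eta_n \rightarrow 0$; combined with $\delta_n \rightarrow 0$, this establishes uniform convergence. The main (indeed, sole nontrivial) ingredient is precisely this passage from pointwise closeness after the coordinate change to genuine uniform closeness, which uniform continuity of $f$ supplies — and which explains why the equivalence can fail when the limit is not continuous.
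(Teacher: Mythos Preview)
Your proof is correct and follows essentially the same approach as the paper's: both use the bijectivity of $\lambda_n$ to rewrite $\sup|f_n-f|$ as $\sup|f_n\circ\lambda_n - f\circ\lambda_n|$ (you do this pointwise via $\lambda_n^{-1}$, the paper does it in one line), split by the triangle inequality, and control the $|f-f\circ\lambda_n|$ term using uniform continuity of $f$. The only cosmetic difference is that you package the uniform-continuity step via the modulus of continuity $\omega_f$, while the paper uses an explicit $\varepsilon$--$\delta$ argument.
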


\begin{proof} It is obvious that uniform convergence implies convergence in the Skorohod topology. To show the converse, let us fix $\varepsilon>0$. Since $f$ is uniformly continuous, there exists $\delta>0$ such that $|f(s,t)-f(u,v)|<\varepsilon/2$ if  $\| (s,t) - (u,v) \| <\delta$.
Now, let $\lambda_1,\lambda_2,\ldots \in \Lambda$ be such that \eqref{eq:sko} holds. Then there exists $n_0 \in \N$ such that for all $n \geqslant n_0$,
\begin{equation*}
\sup_{(s,t) \in [0,1]^2} | f_n \circ \lambda_n (s,t) - f(s,t)|+  \sup_{(s,t)\in [0,1]^2} \| \lambda_n(s,t)-(s,t)\|  < \frac{\varepsilon}{2} \wedge \delta.
\end{equation*}
By the triangle inequality, we have thus for all $n \geqslant n_0$,
\begin{equation*}
\begin{split}
\sup_{(s,t)\in [0,1]^2}|f_n(s,t)-f(s,t)| & = \sup_{(s,t)\in [0,1]^2}|f_n\circ \lambda_n(s,t)-f \circ \lambda_n(s,t)| \\
& \leqslant \sup_{(s,t)\in [0,1]^2}|f_n \circ \lambda_n(s,t)-f(s,t)|\\ 
& \quad + \sup_{(s,t)\in [0,1]^2}|f(s,t)-f \circ \lambda_n(s,t)|  <\varepsilon,
\end{split}
\end{equation*}
which completes the proof.
\end{proof}

\section{Stable convergence lemma}

The following simple lemma is a key tool in proofs of stable convergence in law. It is certainly well-known and, indeed, used (implicitly) in several papers (e.g., \cite{BCP2011,BCPW2009}),
but due to lack of a reference, we provide a proof for the convenience of the reader.

\begin{lem}[Stable convergence]\label{lem:stable}
Let $\mathcal{U}$ and $\mathcal{V}$ be Polish spaces.
If $U,U_1,U_2,\ldots$ are random elements in $\mathcal{U}$ and $V$ is a random element in $\mathcal{V}$, all defined on a common probability space $(\Omega',\mathcal{F}',\prob')$, such that
\begin{equation*}
(U_n,V) \xrightarrow[n\rightarrow \infty]{L} (U,V),
\end{equation*}
then
\begin{equation*}
U_n \xrightarrow[n\rightarrow \infty]{L_{\sigma(V)}} U.
\end{equation*}
\end{lem}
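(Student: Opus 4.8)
The plan is to verify the definition of $\sigma(V)$-stable convergence directly. Thus I must show that for every bounded, $\sigma(V)$-measurable random variable $Z$ and every bounded $f \in C(\mathcal{U},\R)$,
\[
\E'[f(U_n)Z] \xrightarrow[n\rightarrow\infty]{} \E'[f(U)Z].
\]
First I would use the Doob--Dynkin (factorization) lemma to write $Z = h(V)$ for some bounded Borel function $h : \mathcal{V} \to \R$, reducing the claim to $\E'[f(U_n)h(V)] \to \E'[f(U)h(V)]$. Were $h$ continuous, this would be immediate: the map $(u,v) \mapsto f(u)h(v)$ is then bounded and continuous on the product Polish space $\mathcal{U}\times\mathcal{V}$, so the hypothesis $(U_n,V) \stackrel{L}{\rightarrow} (U,V)$ would give the conclusion at once. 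The entire difficulty --- and the step I expect to be the main obstacle --- is that $h$ is only measurable, so the joint convergence in law cannot be applied to the product $(u,v)\mapsto f(u)h(v)$ directly.

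To overcome this I would approximate $h$ by bounded continuous functions in the $L^1$-sense of $\mu$, the distribution of $V$ on $\mathcal{V}$. Since $\mathcal{V}$ is Polish, $\mu$ is a Radon measure, so Lusin's theorem combined with the Tietze extension theorem yields, for each $k \in \N$, a function $h_k \in C(\mathcal{V},\R)$ with $\|h_k\|_\infty \leqslant \|h\|_\infty$ and $\E'\big[|h_k(V)-h(V)|\big] \leqslant 1/k$; equivalently, bounded continuous functions are dense in $L^1(\mu)$ and the approximants may be kept uniformly bounded.

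With these approximants at hand the argument closes via a standard three-term split:
\begin{align*}
\big|\E'[f(U_n)h(V)]-\E'[f(U)h(V)]\big| & \leqslant \|f\|_\infty\,\E'\big[|h(V)-h_k(V)|\big] \\
& \quad + \big|\E'[f(U_n)h_k(V)]-\E'[f(U)h_k(V)]\big| \\
& \quad + \|f\|_\infty\,\E'\big[|h_k(V)-h(V)|\big].
\end{align*}
The two outer terms are each bounded by $\|f\|_\infty/k$, uniformly in $n$, while the middle term tends to $0$ as $n\to\infty$ because $(u,v)\mapsto f(u)h_k(v)$ is bounded and continuous and $(U_n,V)\stackrel{L}{\rightarrow}(U,V)$. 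Hence $\limsup_{n\to\infty}\big|\E'[f(U_n)h(V)]-\E'[f(U)h(V)]\big| \leqslant 2\|f\|_\infty/k$ for every $k$, and letting $k\to\infty$ completes the proof.
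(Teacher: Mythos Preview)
Your proof is correct and takes a genuinely different route from the paper's. The paper uses a functional monotone class argument: it fixes bounded $f\in C(\mathcal{U},\R)$, defines $\mathcal{M}_f=\{X\in L^\infty:\E'[f(U_n)X]\to\E'[f(U)X]\}$, verifies that $\mathcal{M}_f$ is a vector space closed under uniform and monotone bounded limits, observes that the multiplicative class $\mathcal{C}=\{\varphi(V):\varphi\in C_b(\mathcal{V})\}$ sits inside $\mathcal{M}_f$ by the continuous mapping theorem, and concludes via the monotone class lemma that every bounded $\sigma(\mathcal{C})=\sigma(V)$-measurable random variable lies in $\mathcal{M}_f$. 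You instead factorise $Z=h(V)$ directly via Doob--Dynkin and approximate the single measurable $h$ in $L^1(\mu)$ by bounded continuous functions using Lusin--Tietze, then close with an explicit $\varepsilon/3$ split. Your approach is more concrete and self-contained, avoiding the monotone class machinery; the paper's argument is slightly slicker once that machinery is taken for granted and adapts more readily if one wants stability with respect to a $\sigma$-algebra not generated by a single Polish-valued random element. Both rest on the same core observation, namely that the claim is immediate for $Z=\varphi(V)$ with $\varphi$ bounded continuous.
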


\begin{proof}
We will use a monotone class argument. To this end, let $f \in  C(\mathcal{U},\R)$ be bounded and write
\begin{equation*}
\mathcal{M}_f \eqdefl \Big\{ X \in L^\infty(\Omega',\mathcal{F}',\prob') : \lim_{n \rightarrow \infty} \E'[f(U_n)X] = \E'[f(U)X] \Big\}.
\end{equation*}
Clearly, $\mathcal{M}_f$ is vector space that contains all constant random variables. \revised{Moreover, if $X$,\, $\tilde{X} \in L^\infty(\Omega',\mathcal{F}',\prob')$, }then
\begin{equation*}
|\E'[f(U_n)X] - \E'[f(U)X]|\lesssim_f |\E'[f(U_n)\tilde{X}] - \E'[f(U)\tilde{X}]| + \E'[|X-\tilde{X}|].
\end{equation*}
Hence, $\mathcal{M}_f$ is closed under uniform convergence and if $(\tilde{X}_n) \subset \mathcal{M}_f$ is such that $0 \leqslant \tilde{X}_1 \leqslant \tilde{X}_2 \leqslant \cdots \leqslant M$ for some constant $M>0$, then $\lim_{n \rightarrow \infty} \tilde{X}_n \in \mathcal{M}_f$. Now, note that
\begin{equation*}
\mathcal{C} \eqdefl \{ \varphi(V) : \textrm{$\varphi \in C(\mathcal{V},\R)$ is bounded} \}
\end{equation*}
is closed under multiplication and $\mathcal{C} \subset \mathcal{M}_f$ by the continuous mapping theorem. Thus, by the functional monotone class lemma \cite[p.\ 14]{DM1978}, $\mathcal{M}_f$ contains any bounded $\sigma(\mathcal{C})$-measurable random variable.
Since $\mathcal{V}$ is separable, we have $\sigma(V) = \sigma(\mathcal{C})$ and the assertion follows.
\end{proof}


\end{document}